\renewcommand{\H}{\mathbb{H}}
\newcommand{\E}{\mathbb{E}}
\newcommand{\G}{\mathbb{G}}
\newcommand{\M}{\mathbb{M}}
\newcommand{\R}{\mathbb{R}}
\newcommand{\V}{\mathbb{V}}
\newcommand{\Z}{\mathbb{Z}}
\newcommand{\cL}{\mathcal{L}}
\newcommand{\cS}{\mathcal{S}}
\newcommand{\cb}{\mathfrak{b}}
\newcommand{\vth}{\vartheta}
\newcommand{\lls}{\big(}
\newcommand{\rrs}{\big)}
\newcommand{\diam}{\mbox{\rm diam}}
\newcommand{\supp}{\mbox{\rm supp}}
\newcommand{\res}{\mbox{\LARGE{$\llcorner$}}}
\newcommand{\der}{\partial}
\newcommand{\beqas}{\begin{eqnarray*}}
\newcommand{\eeqas}{\end{eqnarray*}}
\newcommand{\beqa}{\begin{eqnarray}}
\newcommand{\eeqa}{\end{eqnarray}}
\newcommand{\beq}{\begin{equation}}
\newcommand{\eeq}{\end{equation}}
\newcommand{\bce}{\begin{center}}
\newcommand{\ece}{\end{center}}
\newcommand{\pa}[1]{\left( #1 \right)}               
\newcommand{\set}[1]{\left\{ #1 \right\}}            
\newcommand{\qandq}{\quad\mbox{and}\quad}
\newtheorem{The}{Theorem}[section]
\newtheorem{Lem}[The]{Lemma}
\newtheorem{Def}[The]{Definition}
\newtheorem{Rem}[The]{Remark}
\newtheorem{Pro}[The]{Proposition}
\newtheorem{Cor}[The]{Corollary}
\newtheorem{Exa}[The]{Example}
\newcommand{\bt}{\begin{The}}
\newcommand{\et}{\end{The}}
\newcommand{\bl}{\begin{Lem}}
\newcommand{\el}{\end{Lem}}
\newcommand{\bd}{\begin{Def}\rm}
\newcommand{\ed}{\end{Def}}
\newcommand{\br}{\begin{Rem}\rm}
\newcommand{\er}{\end{Rem}}
\newcommand{\bpr}{\begin{Pro}}
\newcommand{\epr}{\end{Pro}}
\newcommand{\bc}{\begin{Cor}}
\newcommand{\ec}{\end{Cor}}
\newcommand{\bex}{\begin{Exa}}
\newcommand{\eex}{\end{Exa}}
\theoremstyle{plain}
\newtheorem{theorem}{Theorem}[section]
\newtheorem{proposition}[theorem]{Proposition}
\newtheorem{lemma}[theorem]{Lemma}
\newtheorem{corollary}[theorem]{Corollary}
\newtheorem{definition}[theorem]{Definition}
\theoremstyle{remark}
\newtheorem{remark}[theorem]{Remark}
\let\altphi\phi
\let\phi\varphi
\let\varphi\altphi
\let\altphi\undefined
\newcommand{\veps}{\varepsilon}
\newcommand{\bra}[1]{\left(#1\right)}
\newcommand{\cur}[1]{\left\{#1\right\}}
\newcommand{\sqa}[1]{\left[#1\right]}
\newcommand{\abs}[1]{\left\lvert#1\right\rvert}
\newcommand{\norm}[1]{\left\lVert#1\right\rVert}
\renewcommand{\O}{\H}
\let\d\undefined
\newcommand{\d}{\mathop{}\!\mathrm{d}}
\newcommand{\dist}{\mathsf{d}}
\newcommand{\h}{\mathsf{h}}
\renewcommand{\v}{\mathsf{v}}
\newcommand{\phd}{\,\cdot\,}
\newcommand{\gh}[1]{\left[ #1 \right]^\h}
\newcommand{\gv}[1]{\left[ #1 \right]^\v}
\newcommand{\openball}[2]{B_{#1}\! \left( #2 \right)}
\newcommand{\ob}[2]{\bar{\mathrm{B}}_{#1}\! \left( #2 \right)}
\renewcommand{\cb}[2]{\mathbb{B}_{#1}\! \left( #2 \right)}
\renewcommand{\c}{\mathop{}\!\mathrm{c}}
\newcommand{\A}{U}
\newcommand{\nalp}[1]{\alpha, #1}
\newcommand{\conealp}[2]{C^{1,\alpha}_\h(#1, #2)}
\newcommand{\conetwo}{C^{1,\alpha}_\h(\H, \R^2)}
\newcommand{\nablah}{\nabla_{\h}}
\newcommand{\rt}{\mathrm{R}}
\newcommand{\lsd}{LSDE }
\renewcommand{\E}{\mathrm{E}}
\newcommand{\gA}{\mathrm{A}}
\renewcommand{\V}{\mathrm{K}}
\renewcommand{\chi}{\mathds{1}}
\title{A rough calculus approach to level sets in the Heisenberg group}
\begin{document}

\author{Valentino Magnani}
\address{Valentino Magnani, Dipartimento di Matematica, Universit\`a di Pisa \\
Largo Bruno Pontecorvo 5 \\ I-56127, Pisa}
\email{valentino.magnani@unipi.it}

\author{Eugene Stepanov}
\address{St.Petersburg Branch of the Steklov Mathematical Institute of the Russian Academy of Sciences,
Fontanka 27,
191023 St.Petersburg, Russia
\and
Department of Mathematical Physics, Faculty of Mathematics and Mechanics,
St. Petersburg State University, Universitetskij pr.~28, Old Peterhof,
198504 St.Petersburg, Russia
\and ITMO University}
\email{stepanov.eugene@gmail.com}

\author{Dario Trevisan}
\address{Dario Trevisan, Dipartimento di Matematica, Universit\`a di Pisa \\
Largo Bruno Pontecorvo 5 \\ I-56127, Pisa}
\email{dario.trevisan@unipi.it}

\date{\today}

\thanks{This work is supported by the University of Pisa, 
Project\ PRA\_2016\_41.
The second author also acknowledges the support of the St.~Petersburg State University grants \#6.60.1355.2016 and
\#6.38.223.2014, 
the Russian government grant \#074-U01 and
the RFBR grant \#14-01-00534. }
\subjclass[2010]{Primary 53C17, 28A75. Secondary 26A42.}
\keywords{Heisenberg group; rough paths; implicit function theorem;  coarea formula.} 
\date{\today}

\begin{abstract}
We introduce novel equations, in the spirit of rough path theory, that pa\-ram\-etrize level sets of intrinsically regular maps on the Heisenberg group with values in $\R^2$. These equations can be seen as a sub-Riemannian counterpart to classical ODEs arising from the implicit function theorem. We show that they enjoy all the natural well-posedness properties, thus allowing for a ``good calculus'' on nonsmooth level sets. We apply these results to prove an area formula for the intrinsic measure of level sets, along with the corresponding coarea formula.
\end{abstract}

\maketitle


\section{Introduction}
The classical implicit function theorem asserts that regular level sets of a $C^1$ smooth map 
on a Euclidean space are $C^1$ smooth, with a natural 
parametrization which can be written in terms of differential equations
involving the first derivatives of the map. 
This is no longer the case for maps on
Carnot-Carath\'{e}odory spaces which are regular only with respect to the
intrinsic geometry of their domain. 
In this paper, we study the simplest of such situations, 
where maps are defined on the first Heisenberg group $\H$ 
and take values in $\R^2$, so that their level sets are expected to be one-dimensional. Indeed, our results show that these level sets can be still 
represented by curves, that are in general only H\"{o}lder continuous, and not anymore smooth, 
but still solve a peculiar analogue of a classical ODE, 
that we call \emph{Level Set Differential Equation} (LSDE). 

At first glance, the LSDE is similar to equations driven by a rough signal 
appearing in the theory of \emph{rough paths} as exposed e.g.\ in~\cite{friz_hairer_rough_2014, gubinelli_controlling_2004, lyons_differential_1998}, 
but it is  different, being inherently ``autonomous'', 
while the usual \emph{rough differential equations} (RDEs) are not.  
However, the theory of rough paths still provides an 
appropriate tool, namely, the
\emph{sewing lemma}, to construct solutions to LSDEs, thus enabling some ``differential'' calculus 
for maps on the Heisenberg group, 
regular only in the intrinsic sense of the latter, but possibly nowhere differentiable on a set of positive measure~\cite{magnani_coarea_scalar_2005}.
Before presenting our results, it is worth describing
the mathematical landscape motivating this study.


\subsection*{Intrinsically regular level sets.}

A natural problem of Geometric Measure Theory in graded nilpotent Lie groups is the study of the structure of level sets of maps intrinsically ($\h$-)differentiable in the sense of P.\ Pansu \cite{pansu_metriques_1989}, which are known to be quite complicated \cite{vershik_1987, sussmann_1992, agrachev_1997}, in particular not even  rectifiable in the classical sense \cite{kirchheim_serra-cassano_level_sets_2004} and
neither can be interpreted as metric currents \cite{ambrosio_kirchheim_currents_2000, ambrosio_kirchheim_rectifiable_2000}. For $\h$-differentiable maps $F:\G\to\M$ between graded nilpotent Lie groups,
a convenient parametrization for level sets is available when a {\em semidirect factorization} of $\G$ with respect to $\M$ holds, as a consequence of a suitable implicit function theorem \cite{magnani_towards_2013}. 
In case $\G = \H^n$, the $n$-th Heisenberg group, topologically identified with $\R^{2n+1}$, and $\M =\R^k$,  the respective semidirect factorization is known to exist precisely when $1\le k\le n$. If that is the case, level sets of $F$ can be seen as naturally acting on 
intrinsic differential forms of the Heisenberg group, forming the
so-called Rumin's complex \cite{rumin_complex_1990}. As a result,
they become intrinsic Heisenberg currents
and their intrinsic measure can be computed by a suitable area formula \cite{fssc_currents_2007},  leading to a  coarea formula  \cite{magnani_area_2011}.

When  $n<k\le 2n$ there is no general approach to the structure of level sets, which then cannot even be seen as Heisenberg currents. 
The simplest model of these difficult cases is $n=1$ and $k=2$. 
Here, the fact that level sets at regular points are continuous curves and cannot degenerate to a singleton
was established by the first author and G.P.\ Leonardi \cite{leonardi_intersections_2011}, using an {\em ad-hoc} 
method exploiting classical ODEs. Following a purely geometric approach that relies on a Reifenberg-type flatness estimate, A.~Kozhevnikov~\cite{kozhevnikov_roughness_2011, kozhevnikov_proprietes_2015} showed that they are in fact H\"older continuous curves, furnishing in this way also  a coarea formula for a large subclass of $\h$-differentiable maps from $\H$ to $\R^2$.


\subsection*{An Euclidean view of the problem} 
In case $F\colon \R^3\to \R^2$, $x = (x^1, x^2, x^3)$, $F(x)= (F_1(x), F_2(x))$ is a smooth ($C^1$) map, 
the differential of $F$ at $x \in \R^3$ with respect to the ``horizontal coordinates'' $x^1$, $x^2$ is represented by the square matrix
\[
\nabla_{12} F (x) := \left(
\begin{array}{cc}
  \partial_{1} F_1(x) & \partial_{2} F_1(x) \\
  \partial_{1} F_2 (x) & \partial_{2} F_2(x)
\end{array}
\right).
\]
If $p \in \R^3$ is nondegenerate with $\nabla_{12} F(p)$ invertible, then the implicit function theorem implies that 
the level set $F^{-1}(F(p))$ can be parametrized, locally around $p$, by a $C^1$ curve 
$\gamma: I \to \R^3$, $t \mapsto \gamma_t$, with the parameter $t$, on the interval $I$. After a one dimensional change of variable, we may consider $\gamma^3$ as a local ``vertical coordinate'', i.e.\ $\gamma^3_t:=p^3+t$, that is $\dot{\gamma}^3_t=1$, for $t \in I$. Moreover, the ``horizontal components'' $(\gamma^1$, $\gamma^2)$ solve an ODE, as a consequence of the condition $\frac{d}{dt} F(\gamma_t) = 0$, namely 
\begin{equation}
\label{eq:ode-intro}
\left( 
\begin{array}{c}
 \dot{\gamma}^1_t \\
  \dot{\gamma}^2_t \\
\end{array}
\right) =-
\bra{\nabla_{12} F (\gamma_t)}^{-1} 
\left( 
\begin{array}{c}
 \partial_{3} F_1(\gamma_t) \\
 \partial_{3} F_2(\gamma_t)
\end{array}
\right) \quad \text{for $t \in I$.}
\end{equation}

\subsection*{The nonholonomic problem}
The situation radically changes in the ``nonholonomic'' setting, where 
 ``horizontal'' directions are represented by a couple of smooth vector fields $X_1$, $X_2$,
such that the classical  {\em Lie bracket generating condition}
\beq\label{eq:LieBGenerating}
\mathrm{span}\set{X_1(x), X_2(x), [X_1, X_2](x)}=\R^3 \quad \text{at every $x \in \R^3$}
\eeq 
holds, 
$[\cdot,\cdot]$ standing for the Lie bracket, and the ``horizontal plane'' at $x\in \R^3$ is
defined by $\mathrm{span}\set{X_1(x), X_2(x)}$. 
The model situation is that of $F$ being defined on the first Heisenberg group $\mathbb{H}$,  topologically identified with $\R^3$,
and equipped with ``horizontal vector fields''
\[
 X_1(x^1, x^2, x^3):= \der_1 - x^2\der_3\qandq   X_2(x^1, x^2, x^3):= \der_2+ x^1\der_3.
\]
Let $F\colon \mathbb{H}\to \R^2$ be  continuously horizontally ($\h$-)differentiable.  
Even if the ``horizontal differential'' of $F$, represented by the square matrix 
\[
\nabla_\h F := \left(
\begin{array}{cc}
  X_1 F_1 & X_2 F_1 \\
  X_1 F_2 & X_2 F_2
\end{array}
\right),
\]
is invertible at some point $p \in \H$ (i.e., $p$ is nondegenerate) the loss of Euclidean regularity may allow for highly irregular level sets in a neighbourhood of $p$. If $F$ is $C^1$ in the Euclidean sense, then one could rewrite~\eqref{eq:ode-intro}
in terms of $\nabla_\h F$ instead of $\nabla_{12}F$, getting
\begin{eqnarray} 
\label{eq:ode-intro-h}
&\left( 
\begin{array}{c}
 \dot{\gamma}^1_t \\
  \dot{\gamma}^2_t \\
\end{array}
\right) =-
\bra{\nabla_{\h} F (\gamma_t)}^{-1} 
\left( 
\begin{array}{c}
 \partial_{3} F_1(\gamma_t) \\
 \partial_{3} F_2(\gamma_t)
\end{array}
\right) \theta_{\gamma_t}(\dot \gamma_t), \quad \text{where}\\
&\theta= dx^3 + x^2 dx^1- x^1 dx^2 \nonumber
\end{eqnarray}
is the contact form of $\H$. It is then natural to consider 
\beq\label{eq:theta-introduction}
\theta_{\gamma_t}(\dot \gamma_t) = \dot \gamma_t^3 + \gamma_t^2 \dot \gamma_t^1- \gamma_t^1 \dot \gamma_t^2=1 \quad \text{for $t \in I$}
\eeq
as the condition that replaces $\dot \gamma_t^3 = 1$ occurring in the Euclidean case, closing the system~\eqref{eq:ode-intro-h}.
Here the main difficulty appears when $F$ is only $\h$-differentiable, since in this case
the ``vertical derivatives'' $\partial_3F^1$ and $\partial_3F^2$ may not exist, so 
the system~\eqref{eq:ode-intro-h} makes no sense.
In addition, the $1/2$-H\"older continuity of the sub-Riemannian distance with respect to the Euclidean one leads us to a genuinely H\"older continuous curve $\gamma$ (hence possibly nowhere differentiable), which makes even the definition of the term $\gamma_t^2 \dot \gamma_t^1- \gamma_t^1 \dot \gamma_t^2$ in~\eqref{eq:theta-introduction} a nonsense. 
From a geometric viewpoint, the inherent obstacle to this approach arises from the 
fact that $\gamma$ cannot move along horizontal directions
and in this case there is no suitable geometric notion of differentiability.
If $\dot\gamma_t$ were horizontal, i.e.\ $t \mapsto \gamma_t$ were differentiable at $t \in I$ in the sense of \cite{pansu_metriques_1989}, then the chain rule would give
\[0 =  \frac{d}{dt} F(\gamma_t) = \nablah F(\gamma_t) \left(\begin{array}{c}
 \dot{\gamma}^1_t \\
 \dot{\gamma}^2_t \end{array}\right)\]
and from the non-degeneracy of $\nablah F(\gamma_t)$ we would get $\dot{\gamma}^1_t = \dot{\gamma}^2_t =0$, in conflict with the natural injectivity requirements.
In other words, the parametrization of the level set $\gamma$ at any point must move along ``vertical directions''.


\subsection*{Description of results}
In this paper, we will prove that the analogy with the 
Euclidean situation 
can be suitably extended to the nonholonomic case. 
If the horizontal gradient $\nablah F$ is $\alpha$-H\"{o}lder continuous with respect to the sub-Riemannian distance of $\H$ for some $\alpha\in(0,1)$,
we are able to provide a rigorous counterpart of~\eqref{eq:ode-intro}, which is no longer an ODE but rather a new ``differential equation', the LSDE (Definition~\ref{d:LSDE}). In fact, instead of derivatives, we consider finite differences of the 
unknown solution $\gamma$ at ``sufficiently close'' 
points $s$, $t \in I$. For instance, the third component $\gamma^3$ of the solution is requested to satisfy
\beq\label{eq:contact_discrete}
(\gamma_s^{-1}\gamma_t)^3=t-s + o(|t-s|)
\eeq
for $|t-s|$ small, with the appropriate order of ``error'' $o(\cdot)$.
This seems to be the natural translation of ~\eqref{eq:theta-introduction} into our framework, where the inverse and the product are given by the group operation.
Such use of finite differences allows us to circumvent the
regularity problems of the na\"ive approach: the terms $\gamma_t^2 \dot \gamma_t^1- \gamma_t^1 \dot \gamma_t^2$ in~\eqref{eq:theta-introduction} are replaced by
$\gamma_t^2( \gamma_{s}^1 - \gamma_{t}^1) - \gamma_t^1( \gamma_{s}^2 - \gamma_{t}^2)$,
 and the partial derivatives $\partial_3F^1$, $\partial_3F^2$ in~\eqref{eq:ode-intro-h} are replaced  by an expression involving  the 
remainder of the  ``horizontal'' Taylor expansion of $F$.
To ``integrate'' a consistent family of such local descriptions,
we use an important technical tool underlying the theory of rough/controlled paths, the so-called \emph{sewing lemma}, which in this case leads to integrals in the sense of L.~C.~Young~\cite{young_inequality_1936}.
As already mentioned, the LSDE is an autonomous equation and does not fit precisely in the framework of RDEs, but we may imagine that the ``noise'' is self-induced by~\eqref{eq:contact_discrete}.

Our main result (Theorem~\ref{thm:parametrization}) is a version of the implicit function theorem, showing that any level set of $F$ in a neighbourhood of a nondegenerate point $p\in \H$ can  be parametrized by an injective 
continuous curve $\gamma$ satisfying an \lsd.  Further properties of solutions to LSDEs are proven, such as uniqueness (Theorem~\ref{thm:loc_uniqueness_I}) and stability with respect to approximations of $F$ (Corollary~\ref{cor:stability}).
As two applications, we provide an area formula for level sets (Theorem~\ref{thm:area}, Corollary~\ref{cor:area-lsde}) as well as a coarea formula (Theorem~\ref{thm:coarea}) for maps with H\"older continuous horizontal gradient, where the \lsd is instrumental to follow the approach of \cite{magnani_area_2011}.

\subsection*{Acknowledgements} 
We are grateful to Massimiliano Gubinelli for an important remark on a preliminary version of the proof of Theorem~\ref{thm:existence}. We thank all the speakers of the workshop ``Singular Phenomena and Singular Geometries'', Pisa, 20-23 June 2016, for fruitful discussions, which much contributed to inspire the present work.

\section{Preliminaries}

\subsection*{General notation}
Throughout, we use the notation $\abs{\phd}$ for the Euclidean norm of a vector. For $\beta \in [0,1]$, given an interval $I$ and a function $f: I \to \R^k$, ($k \ge 1$), $t \mapsto f_t$, we let
\[ \norm{f}_{\beta} := \sup_{\substack{s,t \in  I\\ s \neq t}} \frac{ \abs{f_t - f_s}}{\abs{t-s}^\beta} \in [0, \infty],\]
and write $f \in C^\beta(I, \R^k)$ if  $\norm{f}_{\beta}< \infty$. Notice that $\norm{f}_0 \le 2\sup_{t \in I} \abs{f_t}$.

Next, we  introduce notation and basic results on the geometry and analysis of maps in the Heisenberg group. We follow throughout the exposition \cite{folland_hardy_1982}, other approaches can be found, e.g.\ in \cite{burago_course_2001, gromov_metric_1999}.

\subsection*{Heisenberg group} We represent the Heisenberg group $\H$ as $\R^3$ equipped with the non-commutative group operation $(x,y) \to xy$  defined by
\begin{equation}\label{eq:heisenberg-operation}(x^1, x^2, x^3) (y^1, y^2, y^3) =  (x^1+y^1, x^2+y^2, x^3+y^3 +(x^1y^2-x^2y^1)) \end{equation}
where $x =(x^1, x^2, x^3)$ and  $y = (y^1, y^2, y^3)$. 

 In what follows, for simplicity of notation,  we let $x^\h = (x^1, x^2) \in \R^2$ (respectively, $x^\v = x^3 \in \R$) denote the ``horizontal'' (respectively ``vertical'') components of $x = (x^1, x^2, x^3) \in \H$, so that we identify $x =(x^\h, x^\v)$. We notice that $x \mapsto  x^\h$ is a group homomorphism, since~\eqref{eq:heisenberg-operation} gives  $(xy)^\h = x^\h + y^\h$, while $x \mapsto x^\v$ is not.

\subsection*{Dilations and gauges} For $r\ge 0$, we let $\delta_r: \H \to \H$ denote the intrinsic dilation (which is a group homomorphism)
\[ x \mapsto \delta_r(x) = ( r x^\h, r^2 x^\v) = (rx^1,rx^2,r^2x^3).\]
Clearly, $\delta_r \circ \delta_{r'} = \delta_{r r '}$, for any $r$, $r' \ge 0$. 

It is useful to introduce the following non-negative functions on $\H$: the ``horizontal gauge'', $\gh{\phd}$, and the ``vertical gauge'', $\gv{\phd}$,
\[ \gh{x}   := | x^\h| = \sqrt{ \abs{x^1}^2 + \abs{x^2}^2}, \quad \quad \gv{x}  := \sqrt{\abs{x^\v}} = \sqrt{ \abs{x^3}}.\]
Both $[ \phd ]^\h$ and  $[ \phd ]^\v$ are $1$-homogeneous with respect to dilations, i.e.,
\[  \gh{\delta_r x} = r \gh{x}, \quad  \gv{\delta_r x} = r \gv{x}, \quad \text{for $x\in \H$, $r\ge 0$.}\]

\subsection*{Invariant distances}

We fix from now on a distance $\dist:\H\times\H\to\R$, which is \emph{left-invariant} with respect to the group operation and $1$-homogeneous with respect to dilations, i.e.,
\[
\dist(x,y)=\dist(zx,zy)\qandq \dist(\delta_rx,\delta_ry)=r \, \dist(x,y),
\]
for $x,y,z\in\H$ and $r\ge0$. Closed (respectively, open) balls of center $x \in H$ and radius $r\ge 0$ are denoted by $\cb{r}{x}$ (respectively, $\ob{r}{x}$).
A fundamental example of such a distance is the so-called Carnot-Carath\'eodory distance associated to a left-invariant horizontal distribution of vector fields.

By $1$-homogeneity, e.g.~as in \cite[Proposition~1.5]{folland_hardy_1982}, there exists some constant $\c = \c(\dist)\ge 1$ such that
\begin{equation}\label{eq:equivalence-distance}
 \c^{-1}\pa{ \gh{x^{-1}y}  + \gv{x^{-1}y} } \le \dist (x,y) \le \c \pa{ \gh{x^{-1}y}  + \gv{x^{-1}y} }
\quad \text{for $x, y \in \H$.}
\end{equation}

\subsection*{Horizontally differentiable maps}

We introduce the following left invariant vector fields on $\H$, seen as derivations,
\[ X_1(x^1, x^2, x^3)= \der_1 - x^2\der_3, \quad  X_2(x^1, x^2, x^3)= \der_2+ x^1\der_3,\]
where $\partial_i$ denotes the derivation along the direction $x^i$, $i \in \cur{1, 2,3}$.
Their linear span at any point $x \in \H$ defines the the so-called {\em horizontal distribution}, which is well-known to be totally non-integrable. The Carnot-Carath\'eodory distance associated to $X_1$, $X_2$ yields the {so-called} sub-Riemannian distance  on $\H$.

For $k\ge1$, $\alpha \in (0,1]$, given a function $g: \O \to \R^k$ and a subset $\A \subseteq  \H$ we let
\[ \norm{g}_{\nalp{\A}} := \sup_{\substack{x,y\in\A\\ x\neq y }} \frac{\abs{g(x)- g(y)}} {\dist(x,y)^\alpha}.\]

For $F: \O \to \R^k$, we write $F \in \conealp{\O}{\R^k}$ if both derivatives $X_1 F (x)$, $X_2 F(x)$ exist at every  $x \in \O$ and the \emph{horizontal Jacobian matrix}  $\nablah F(x) := \bra{ X_1 F(x), X_2F(x) }$ satisfies
\[  \norm{ \nablah F}_{\alpha, \A}< \infty, \quad \text{for every bounded $\A \subseteq \H$.}\]
We say that a sequence $(F^n)_{n\ge1} \subseteq \conealp{\O}{\R^k}$ converge to $F \in \conealp{\O}{\R^k}$ if, for every bounded set $\A \subseteq \H$, we have $F^n \to F$ uniformly in $\A$ and $\norm{ \nablah F -\nablah F^n }_{\alpha, \A}\to 0$ as $n \to \infty$.

\begin{remark}
It would be more appropriate  (but heavier) to use the notation $C^{1,\alpha}_{\h, {loc}}(\O, \R^k)$ instead of $\conealp{\O}{\R^k}$, because maps $F \in\conealp{\O}{\R^k}$ have only \emph{locally} H\"older continuous derivatives.
\end{remark}

\begin{definition}
A function $F: \H \to \R^k$ is called {\em $\h$-differentiable} at $x \in \H$, if there exists a group homomorphism $\d_{\h}  F(x): \H \to \R^k$ such that
\[  \lim_{y \to x} \frac{ \abs{ F(y) - F(x) - \d_{\h}  F(x) \bra{ x^{-1}y} }} {\dist(x,y)} = 0.\]
We say that $x \in \H$ is \emph{nondegenerate} for $F$ if $\d_{\h}  F(x)$ is injective (when $k=2$, this amounts to $\d_{\h}  F(x)$ invertible).
\end{definition}

In all what follows, for $x$, $y \in \H$, we write
\begin{equation}\label{eq:taylor-definition} \rt(x,y) :=  F(y) - F(x) - \d_{\h}  F(x) \bra{ x^{-1}y}\end{equation}
for the first-order horizontal Taylor expansion in $x$, evaluated at $y$. The implicit dependence upon $F$ in such notation will be always clear from the context.

If $F\in \conealp{\O}{\R^k}$, the stratified Taylor inequality \cite[Theorem~1.42]{folland_hardy_1982} ensures that $F$ is $\h$-differentiable at every $x \in \O$, with
\begin{equation}\label{eq:d-nabla} \d_{\h}  F(x) (y) = \nablah   F(x) \, y^\h = X_1 F(x) \, y^1 + X_2 F(x)\,  y^2.\end{equation}
The same result guarantees that there exists some constant $\c = \c(\dist) \ge 1$ such that
\begin{equation}\label{eq:taylor-two-points}
\abs{ \rt(x,y) } \le \c \norm{\nablah   F}_{\alpha, \ob{\c r}{x}} \dist(x,y)^{1+\alpha}, \quad \text{for any $x$, $y \in \H$, with $\dist(x,y) \le r$.}
\end{equation}
Let us stress the fact that $\norm{\nablah   F}_{\alpha, \ob{\c r}{x}}$  above is on the ball of center $x$ and radius $\c \,  r$.

For technical reasons, it will be useful to use horizontal Taylor expansions at a fixed point $p \in \O$, relying on the algebraic identity 
\begin{equation}\label{eq:taylor-three-points}  \rt(p,y)-\rt(p,x) =  F(y) - F(x) - \d_{\h}  F (p) (x^{-1}y ) \quad \text{for $x$, $y \in \O$,}\end{equation}
following from~\eqref{eq:taylor-definition} and the fact that $\d_{\h}  F(p)$ is a homomorphism.

For $x,y \in \ob{r}{p}$, $r \ge 0$, one has
%
\begin{equation*}\begin{split}\abs{\rt(p,y)-\rt(p,x)} & \le \abs{ F(y) - F(x) - \d_{\h}  F (x) (x^{-1}y )} + \abs{  \bra{ \d_h F (x)- \d_h F(p)} (x^{-1}y )} \\
& = \abs{ \rt(x,y)} + \abs{  \bra{ \nablah F (x)- \nablah F(p)} (x^{-1}y )^\h}\quad \text{by~\eqref{eq:taylor-definition},~\eqref{eq:d-nabla}}  \\
&\le \c \norm{\nablah  F}_{\alpha, \ob{2 \! \c r }{p}}  \bra{\dist(x,y)^{1+\alpha} +  \dist(p,x)^\alpha \gh{x^{-1} y}  }\quad \text{by~\eqref{eq:taylor-two-points}}\\
&\le \c \norm{\nablah  F}_{\alpha, \ob{2 \! \c r }{p}}  \bra{\dist(x,y)^{1+\alpha} +  r^\alpha \gh{x^{-1} y}  }\\
&\le \c \norm{\nablah  F}_{\alpha, \ob{2 \! \c r }{p}}  \bra{\c^{1+\alpha} \bra{\gh{x^{-1}y} + \gv{x^{-1}y} }^{1+\alpha} +  r^\alpha \gh{x^{-1} y}  },\\
\end{split}\end{equation*}
the latter inequality coming from~\eqref{eq:equivalence-distance}. Applying the inequality $\abs{a + b}^{1+\alpha} \le  2^\alpha \bra{a^{1+\alpha}+ b^{1+\alpha}}$ with $a = \gh{x^{-1}y}$, $b = \gv{x^{-1}y}$ we get, for some constant $\c = \c(\dist, \alpha) \ge 1$,
\begin{equation}\label{eq:error-taylor-three-points}
\abs{\rt(p,y)-\rt(p,x)}  \le \c \norm{\nablah  F}_{\alpha, \ob{2 \! \c r }{p}} \bra{ r^\alpha \gh{x^{-1}y} + \bra{\gv{x^{-1}y} }^{1+\alpha}}\, \text{for $x$, $y \in \ob{r}{p}$,} \end{equation}
where we also used~\eqref{eq:equivalence-distance} to estimate the term
\[ \bra{\gh{x^{-1}y}}^{1+\alpha} \le  \bra{\c\, \dist(x,y)}^{\alpha} \gh{x^{-1}y} \le \bra{2\c^2}^\alpha  r^{\alpha} \gh{x^{-1}y}.\]
We also mention the weaker version of~\eqref{eq:error-taylor-three-points}, which follows from it arguing as above with $\gv{x^{-1}y}$ in place of $\gh{x^{-1}y}$: 
\begin{equation}\label{eq:error-taylor-three-points-weak}
\abs{\rt(p,y)-\rt(p,x)}  \le \c \norm{\nablah  F}_{\alpha, \ob{2 \! \c r }{p}} r^\alpha \bra{ \gh{x^{-1}y} + {\gv{x^{-1}y} }}\, \text{for $x$, $y \in \ob{r}{p}$,} \end{equation}
for some $\c = \c(\dist, \alpha) \ge 1$.
%

\section{The level set differential equation}

We introduce our main objects of study, i.e., suitable ``differential equations'' which provide parametrizations of level sets of a function $F\in \conetwo$, in a neighbourhood of a nondegenerate point $p \in \O$, i.e.\ when the matrix $\nablah F(p)$ invertible. In what follows, we fix  $\alpha \in (0,1]$.

\begin{definition}[Level set differential equation]\label{d:LSDE} Let $p \in \H$ be a nondegenerate point for $F\in \conetwo$. Given an interval $I \subseteq \R$, we say that $\gamma: I \to \H$, $t \mapsto \gamma_t$, is a solution to the level set differential equation (LSDE) if $\gamma$ is continuous and  
    \renewcommand{\arraystretch}{1.2}
\begin{equation}\label{eq:yde-group}
  \left\{ \begin{array}{ll}  \bra{ \gamma^{-1}_s\gamma_t}^\h & =  -\nablah  F(p)^{-1} \bra{  \rt(p,\gamma_t)-\rt(p,\gamma_s)} \\
  \bra{ \gamma^{-1}_s\gamma_t}^\v  & =  t-s + \E_{st} \end{array} \right. \quad
 \quad \text{for every $s, t \in I$,}
\end{equation}
with $\E_{st}:I^2 \to \R^2$ satisfying
\begin{equation}\label{eq:error}
\norm{\E} := \sup_{\substack{s,t\in I \\ s \neq t }} \frac{\abs{ \E_{st} }}{\abs{t-s}^{1+\alpha}} < \infty.
\end{equation}
\end{definition}

\begin{remark}[Concentration on level sets] \label{rem:concentration} Any solution $\gamma$ to the \lsd is concentrated on a level set of $F$, i.e.\ $t \mapsto F(\gamma_t)$ is constant. Actually, this follows from the ``horizontal'' (i.e.\ first) equation in~\eqref{eq:yde-group} only, for
\begin{equation*}
\begin{split}
F(\gamma_t) - F(\gamma_s) & = \d_{\h}  F(p) \bra{ \gamma_{s}^{-1} \gamma_{t}} + R(p, \gamma_{t})- R(p, \gamma_{s}) \\
& = \nablah  F(p) \bra{ \bra{\gamma_{s}^{-1} \gamma_{t}}^\h + \nablah  F(p)^{-1} \bra{R(p, \gamma_{t})- R(p, \gamma_s)} }= 0.
\end{split}\end{equation*}
\end{remark}

\begin{remark}[On the vertical equation]
Using the group operation~\eqref{eq:heisenberg-operation}, we can rewrite the ``vertical'' (i.e.\ second) equation  in~\eqref{eq:yde-group} as
\begin{equation*} 
 \gamma_t^\v-\gamma_s^\v = t-s + (\gamma^1_s \gamma^2_t - \gamma^1_t \gamma^2_s) + \E_{st}, \quad \text{for  $s, t \in I$.}\end{equation*}
\end{remark}

\begin{remark}[On ``errors'']
The term $\E_{st}$  should be regarded as a natural ``error'' arising from the fact that the \lsd is in fact a difference equation, rather than a differential one, in the spirit of controlled equations as developed e.g.\ in \cite{gubinelli_controlling_2004}. Condition~\eqref{eq:error}  becomes crucial to make sure that the contributions of $\E_{st}$ are infinitesimal, in some sense. In principle, one could allow as well for an error term $\E_{st}^\h$ also in the ``horizontal'' equation in~\eqref{eq:yde-group} but then, under an assumption on $\E^\h_{st}$ similar to~\eqref{eq:error}, one can prove that necessarily  $\E_{st}^\h =0$, hence we directly formulate the \lsd without such a term.
\end{remark}

\begin{remark}[On the role of $p$]
Taking into account~\eqref{eq:taylor-three-points}, one could also think of replacing $p$ with $\gamma_s$, possibly allowing for an additional error $\E_{st}^\h$. However, for our technique to work, namely for the sewing lemma (Lemma \ref{lem:sewing} below) to be applicable, such a choice seems to restrict the validity of our arguments only to $\alpha > 1/2$ (if one reiterates all the computations of this paper). Moreover, let us notice that we are not requiring $\gamma_t = p$ for some $t$: actually, we let $I = [-\delta, \delta]$, for some $\delta>0$, and choose $\gamma_0$ sufficiently close (but not necessarily equal) to $p$.
\end{remark}

%
%

We end this section with a basic result showing that the term $t-s$ in the ``vertical'' equation in~\eqref{eq:yde-group} prevents $\gamma$ from being constant, actually forcing its local injectivity.

\begin{lemma}[Local injectivity]\label{lem:injective}
Let $I \subseteq \R$ be an interval, $\gamma: I \to \H$ be a solution to the LSDE associated to $F$, with $p \in \H$ nondegenerate. Then, there exist $\delta>0$ and $\varrho>0$ such that
\begin{equation}\label{eq:local-injectivity}
\abs{t-s}^{1/2} \le \varrho\,  \dist(\gamma_s, \gamma_t), \quad \text{for $s$, $t \in I$, $\abs{t-s} \le 2 \delta$.}
\end{equation}
\end{lemma}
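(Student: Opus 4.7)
The plan is to exploit the ``vertical'' equation of the LSDE, which says that $(\gamma_s^{-1}\gamma_t)^\v$ is essentially $t-s$ up to a higher-order error, and then to pass from the vertical gauge to the distance via the equivalence~\eqref{eq:equivalence-distance}.

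First I would use the second equation in~\eqref{eq:yde-group} together with~\eqref{eq:error} to write
\[
\abs{(\gamma_s^{-1}\gamma_t)^\v} \;\ge\; \abs{t-s} - \abs{\E_{st}} \;\ge\; \abs{t-s}\bra{1 - \norm{\E}\,\abs{t-s}^\alpha}.
\]
Then I would choose $\delta>0$ so small that $\norm{\E}\,(2\delta)^\alpha \le 1/2$; concretely, $\delta := \tfrac12 \bra{2\norm{\E}}^{-1/\alpha}$ if $\norm{\E}>0$, and any $\delta>0$ otherwise. For such $\delta$ and every $s,t \in I$ with $\abs{t-s}\le 2\delta$, this gives
\[
\abs{(\gamma_s^{-1}\gamma_t)^\v} \;\ge\; \tfrac12 \abs{t-s},
\]
hence by definition of the vertical gauge
\[
\gv{\gamma_s^{-1}\gamma_t} \;=\; \sqrt{\abs{(\gamma_s^{-1}\gamma_t)^\v}} \;\ge\; \tfrac{1}{\sqrt{2}}\,\abs{t-s}^{1/2}.
\]

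To conclude, I would apply the left-hand inequality of~\eqref{eq:equivalence-distance} to bound the distance below by the vertical gauge,
\[
\dist(\gamma_s,\gamma_t) \;\ge\; \c^{-1}\gv{\gamma_s^{-1}\gamma_t} \;\ge\; \tfrac{1}{\c\sqrt{2}}\,\abs{t-s}^{1/2},
\]
which is the desired estimate with $\varrho := \c\sqrt{2}$.

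There is no genuine obstacle here: the horizontal equation of~\eqref{eq:yde-group} plays no role, as injectivity is forced purely by the ``driving'' term $t-s$ in the vertical equation, and the assumption $\norm{\E}<\infty$ from~\eqref{eq:error} gives exactly the right order of error so that for $\abs{t-s}$ small enough this driving term dominates. The proof is essentially a three-line manipulation, and the only choice to be made is to fix $\delta$ small enough to absorb the error term $\E_{st}$.
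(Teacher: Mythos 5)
Your proof is correct and follows essentially the same route as the paper's: you isolate the vertical component using the second equation of~\eqref{eq:yde-group}, absorb the error term $\E_{st}$ by choosing $\delta$ small enough that $(2\delta)^\alpha\norm{\E}\le 1/2$, and pass to the distance via the lower bound in~\eqref{eq:equivalence-distance}. The paper arranges the inequality in the other direction (bounding $\abs{t-s}$ above by $(\c\,\dist(\gamma_s,\gamma_t))^2$ plus the error and then absorbing), but this is only a cosmetic difference; both yield $\varrho = \c\sqrt2$.
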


\begin{proof}
The ``vertical'' equation in~\eqref{eq:yde-group} gives
\[  t-s  = (\gamma_s^{-1} \gamma_t)^\v - \E_{st}, \]
hence, if $s$, $t \in I$ satisfy $\abs{t-s} \le 2\delta$, then
\begin{equation}\label{eq:vertical-injectivity-proof}\abs{t-s}  \le \abs{ (\gamma_s^{-1} \gamma_t)^\v} + \abs{\E_{st}} 
  \le \bra{\c\, \dist(\gamma_s, \gamma_t)}^2 + (2 \delta)^\alpha \norm{\E} \abs{t-s},\end{equation}
 by~\eqref{eq:equivalence-distance} and~\eqref{eq:error}. If we choose $\delta>0$, $\varrho>0$ such that
 \begin{equation}\label{eq:conditions-injective-lemma} (2\delta)^\alpha \norm{\E} \le \frac 1 2 \quad \text{and} \quad \varrho^2 \ge 2\c^2,  \end{equation}
we obtain, from~\eqref{eq:vertical-injectivity-proof},
\[\abs{t-s} \le  2 \bra{\c \, \dist(\gamma_s, \gamma_t)}^2 =  2 \c^2 \dist(\gamma_s, \gamma_t)^2\le  \bra{\varrho\dist(\gamma_s, \gamma_t)}^2,\]
hence the thesis.
\end{proof}

\section{Existence of solutions}

To provide existence of some solution to the \lsd we rely on the fundamental tool of the theory of controlled paths, sometimes called \emph{sewing lemma}, which allows us to cast the differential equations into an ``integral'' form, and perform a Schauder fixed point argument. 

\begin{lemma}[Sewing lemma]\label{lem:sewing}
For $\alpha \in (0,1]$, $k \ge 1$, there exists some constant $\kappa > 0$ such that the following holds. For any interval $I$ and continuous $\gA : I^2 \to \R^k$ that satisfies
\[ \abs{ \gA_{st} - \gA_{su} - \gA_{ut} } \le \norm{\gA} \abs{t-s}^{1+\alpha}, \quad \text{for $s$, $u$, $t \in I$ with $s\le u \le t$,}\]
for some constant $\norm{\gA}$, then there exists a continuous function $f: I \to \R$  such that
\begin{equation}\label{eq:sewing} \abs{f_t - f_s - \gA_{st} } \le \kappa \norm{\gA} \abs{t-s}^{1+\alpha} , \quad \text{for $s$,  $t \in I$.}\end{equation}
\end{lemma}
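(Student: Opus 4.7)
The plan is to construct $f$ as a Young-type ``integral'' of $\gA$, defined as the limit of partition sums. For any finite ordered partition $P = (s = t_0 < t_1 < \dots < t_n = t)$ of a subinterval $[s,t] \subseteq I$, set $S(P) := \sum_{i=0}^{n-1} \gA_{t_i t_{i+1}}$. I will show that $S(P)$ converges, as the mesh of $P$ shrinks, to a limit $J_{st}$ satisfying
\[
 \abs{J_{st} - \gA_{st}} \le \kappa \norm{\gA} \abs{t-s}^{1+\alpha},
\]
for some constant $\kappa$ depending only on $\alpha$, and that $(s,t) \mapsto J_{st}$ is additive; then $f_t := J_{t_0 t}$ (for a fixed basepoint $t_0 \in I$) will be the required function.

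The crux is a uniform-in-$P$ bound $\abs{S(P) - \gA_{st}} \le \kappa \norm{\gA} \abs{t-s}^{1+\alpha}$. I obtain it by a greedy point-removal argument due to Young: if $P$ has $n \ge 1$ interior points, pigeonhole produces an interior index $i$ with $t_{i+1}-t_{i-1} \le 2(t-s)/n$, and removing $t_i$ changes $S(P)$ by $\gA_{t_{i-1}t_i} + \gA_{t_i t_{i+1}} - \gA_{t_{i-1}t_{i+1}}$, whose modulus is bounded by $\norm{\gA} \bra{2(t-s)/n}^{1+\alpha}$ via the almost-additivity hypothesis. Iterating all the way down to the trivial partition $(s,t)$ and summing yields
\[
 \abs{S(P) - \gA_{st}} \le \norm{\gA}\, (2\abs{t-s})^{1+\alpha} \sum_{n \ge 1} n^{-(1+\alpha)},
\]
the series being finite since $\alpha > 0$.

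From this uniform bound, refinement estimates follow by applying the same argument on each subinterval of $P$: for any refinement $P' \supseteq P$ one gets
\[
 \abs{S(P') - S(P)} \le \kappa \norm{\gA} \sum_i (t_{i+1}-t_i)^{1+\alpha} \le \kappa \norm{\gA}\, (\mathrm{mesh}\, P)^\alpha \abs{t-s}.
\]
Hence $S(P_n)$ is Cauchy along any sequence of partitions with mesh tending to zero; comparing two such sequences through a common refinement shows the limit $J_{st}$ is independent of the chosen sequence. Additivity $J_{su} + J_{ut} = J_{st}$ for $s \le u \le t$ is then immediate: take mesh-shrinking partitions $P_n$ of $[s,u]$ and $Q_n$ of $[u,t]$ and note $S(P_n) + S(Q_n) = S(P_n \cup Q_n) \to J_{st}$.

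Finally, fix $t_0 \in I$ and define $f_t := J_{t_0 t}$ for $t \ge t_0$ and $f_t := -J_{tt_0}$ for $t < t_0$. Additivity gives $f_t - f_s = J_{st}$, so the uniform bound is precisely~\eqref{eq:sewing}; continuity of $f$ follows from this estimate together with the continuity of $\gA$ and the identity $\gA_{ss}=0$ (obtained by setting $s=u=t$ in the almost-additivity hypothesis). The main obstacle, and essentially the only one, is the uniform bound for arbitrary $P$; once that is in place the rest is routine bookkeeping.
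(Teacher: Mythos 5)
The paper itself does not prove Lemma~\ref{lem:sewing}: it delegates the proof to Feyel--De La Pradelle \cite{feyel_curvilinear_2006, feyel_non-commutative_2008}. Your proposal supplies a complete, self-contained proof via the classical Young maximal (greedy point-removal) argument, and it is correct. To compare with the usual route taken in the cited references (and in most rough-paths texts): there one typically works with dyadic partitions of $[s,t]$ and telescopes $S(P_{n+1})-S(P_n)$ to get a geometric series $\sum_n 2^{-n\alpha}$, then obtains the general partition bound only as a byproduct. You instead prove the uniform bound
\[
\abs{S(P)-\gA_{st}}\le \norm{\gA}\,(2\abs{t-s})^{1+\alpha}\sum_{n\ge 1}n^{-(1+\alpha)}
\]
directly over \emph{all} partitions $P$ by removing points one at a time, which simultaneously yields existence, the quantitative estimate \eqref{eq:sewing}, and the refinement bound $\abs{S(P')-S(P)}\le\kappa\norm{\gA}(\mathrm{mesh}\,P)^\alpha\abs{t-s}$ used for the Cauchy/uniqueness-of-limit and additivity arguments. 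Both approaches give the same constant up to a numerical factor; the point-removal version is arguably cleaner for obtaining the partition-uniform bound in one stroke, while the dyadic version is shorter if one only cares about existence. All the steps you give (pigeonhole to find an index $i$ with $t_{i+1}-t_{i-1}\le 2(t-s)/n$, the bound from almost-additivity, iteration, Cauchyness along mesh-shrinking sequences via a common refinement, additivity $J_{su}+J_{ut}=J_{st}$, and continuity of $f$ from $\gA_{ss}=0$) check out. One small notational remark: the statement as written declares $f:I\to\R$ though $\gA$ takes values in $\R^k$; the argument is coordinate-wise in any case, so this affects nothing.
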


For a proof, we refer e.g.\ to \cite[Lemma~2.1]{feyel_curvilinear_2006} (see also \cite[Theorem~2]{feyel_non-commutative_2008} for more general moduli of continuity). 

\begin{remark}[Young integrals]\label{rem:young-integral}
The theory of integration in the sense of Young, introduced in the seminal paper \cite{young_inequality_1936}, can be recovered as an instance of the sewing lemma. Actually, the \lsd could be stated as well as an integral Young equation, but we chose to adopt the modern point of  view as in \cite{gubinelli_controlling_2004}. Indeed, for $g^1: I \to \R$, $g^2: I \to \R$, define
\begin{equation}\label{eq:germ-young} \gA_{st} := g^1_s \bra{ g^2_t - g^2_s}, \quad \text{$s$, $t \in I$.}\end{equation}
Then, for $s$, $t$, $u \in I$, with $s\le u \le t$,
\begin{equation}\label{eq:delta-young}\begin{split} \gA_{st} - \gA_{su} - \gA_{ut} & = g^1_s \bra{ g^2_t - g^2_s}-g^1_s \bra{ g^2_u - g^2_s}-g^1_u \bra{ g^2_t - g^2_u} \\
& = \bra{ g^1_s -g^1_u} \bra{ g^2_t - g^2_u}.
\end{split}\end{equation}
Therefore, if $g^1 \in C^{\beta_1}(I; \R)$, $g^2\in  C^{\beta_2}(I; \R)$, with $\beta_1+\beta_2 >1$,
\begin{equation} \label{eq:norm-young} \abs{ \gA_{st} - \gA_{su} - \gA_{ut}} \le \norm{g^1}_{\beta_1} \norm{g^2}_{\beta_2} \abs{t-s}^{\beta_1+\beta_2},\end{equation}
and the sewing lemma applies, yielding a function $f$, which one could show \cite{gubinelli_controlling_2004} that satisfies $f_t  = f_0 + \int_0^t g^1_s dg^2_s$, where integration is in the sense of Young.
\end{remark}

\begin{remark}[Uniqueness]\label{rem:sewing_unique}
Clearly, if $f$ satisfies~\eqref{eq:sewing} and we add to $f$ a constant function, the sum still satisfies~\eqref{eq:sewing}, hence we may always additionally prescribe the value of $f$ at some (but only one) $t \in I$. Moreover, we have uniqueness up to additive constants, in the following sense: if $g:I\to\R$ satisfies
\[
\limsup_{\substack{s,t \in I \\ \abs{s-t}\to 0}} \frac{|g_t-g_s-\gA_{st}|}{\abs{t-s}} = 0,
\]
then $f-g$ is a constant function.
\end{remark}

\begin{theorem}[Existence]\label{thm:existence}
Let $p \in \H$ be a nondegenerate point for $F\in \conetwo$. Then, there exist positive $\delta_0$, $\veps_0$, $\varrho_0$ such that, for any $q \in \ob{\veps_0}{p}$, there is an injective solution $\gamma$ to the \lsd on the interval $[-\delta_0, \delta_0]$, with $\gamma_0=q$,
\begin{equation}\label{eq:holder-horizontal} 
  \|\gamma^\h \|_{\frac{1+\alpha}{2}} \le  \varrho_0 \quad \text{and} \quad \norm{\E} \le \kappa  \|\gamma^\h \|_{\frac{1+\alpha}{2}}^2 \le \kappa  \varrho_0^2.
\end{equation}
\end{theorem}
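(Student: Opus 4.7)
For any $\eta \in C^{(1+\alpha)/2}([-\delta_0,\delta_0]; \R^2)$ with $\eta_0 = q^\h$, define the germ
\[
\gA_{st} := (t-s) + \eta^1_s(\eta^2_t - \eta^2_s) - \eta^2_s(\eta^1_t - \eta^1_s).
\]
By the Young-type computation analogous to~\eqref{eq:delta-young}, one has $\gA_{st} - \gA_{su} - \gA_{ut} = (\eta^1_s - \eta^1_u)(\eta^2_t - \eta^2_u) - (\eta^2_s - \eta^2_u)(\eta^1_t - \eta^1_u)$, whose absolute value is bounded by $\|\eta\|_{(1+\alpha)/2}^2 \abs{t-s}^{1+\alpha}$. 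Lemma~\ref{lem:sewing} yields $\eta^\v:[-\delta_0,\delta_0]\to\R$, uniquely determined by the constraint $\eta^\v_0 = q^\v$, with $\abs{\eta^\v_t - \eta^\v_s - \gA_{st}} \le \kappa \|\eta\|_{(1+\alpha)/2}^2 \abs{t-s}^{1+\alpha}$. Writing $\gamma := (\eta, \eta^\v)$ and expanding the group law~\eqref{eq:heisenberg-operation}, this is precisely the vertical equation in~\eqref{eq:yde-group} with $\|\E\| \le \kappa \|\eta\|_{(1+\alpha)/2}^2$.

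\textbf{Horizontal fixed-point.} Let $K := \{\eta \in C([-\delta_0,\delta_0]; \R^2) : \eta_0 = q^\h,\ \|\eta\|_{(1+\alpha)/2} \le \varrho_0\}$, which is convex and, by Arzel\`a--Ascoli, compact in the $C^0$-topology. Define $\Phi: K \to C([-\delta_0,\delta_0]; \R^2)$ by
\[
\Phi(\eta)_t := q^\h - \nablah F(p)^{-1}\bigl( \rt(p, \gamma_t) - \rt(p, q) \bigr),
\]
with $\gamma = (\eta, \eta^\v)$ as above. By construction, a fixed point $\eta = \Phi(\eta)$ is exactly the horizontal part of a solution to the \lsd with $\gamma_0 = q$.

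\textbf{Self-map estimate.} To verify $\Phi(K) \subseteq K$, apply~\eqref{eq:error-taylor-three-points} to $(x,y) = (\gamma_s, \gamma_t)$ inside a ball $\ob{r}{p}$ whose radius $r = r(\veps_0, \delta_0, \varrho_0)$ can be made small. Using $\gh{\gamma_s^{-1}\gamma_t} = \abs{\eta_t - \eta_s} \le \varrho_0 \abs{t-s}^{(1+\alpha)/2}$ and, from the vertical equation together with~\eqref{eq:error}, $\gv{\gamma_s^{-1}\gamma_t}^{1+\alpha} \le C\abs{t-s}^{(1+\alpha)/2}$, one obtains
\[
\|\Phi(\eta)\|_{(1+\alpha)/2} \le C_1 + C_2\, r^\alpha\, \varrho_0,
\]
with constants $C_1, C_2$ depending only on $\abs{\nablah F(p)^{-1}}$ and the local H\"older seminorm of $\nablah F$ near $p$. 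Picking first $\varrho_0 := 2 C_1$ and then $\veps_0, \delta_0$ small enough so that $C_2 r^\alpha \le 1/2$ and also $(2\delta_0)^\alpha \kappa \varrho_0^2 \le 1/2$, one secures both $\Phi(K) \subseteq K$ and the hypothesis~\eqref{eq:conditions-injective-lemma} of Lemma~\ref{lem:injective}.

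\textbf{Conclusion and main obstacle.} Continuity of $\Phi$ on $K$ in the $C^0$-topology follows because uniform convergence $\eta_n \to \eta$ makes the associated germs $\gA^n$ converge uniformly under a uniform defect bound, so the sewn $\eta^\v_n$ converge uniformly to $\eta^\v$; then $\gamma_n \to \gamma$ uniformly and, by continuity of $F$, $\Phi(\eta_n) \to \Phi(\eta)$. Schauder's fixed-point theorem delivers $\eta \in K$ with $\Phi(\eta) = \eta$; the curve $\gamma = (\eta, \eta^\v)$ then solves the \lsd with $\gamma_0 = q$, satisfies the stated bounds, and is injective by Lemma~\ref{lem:injective}. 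The delicate step is the self-map property: horizontal and vertical components are strongly coupled through $F(\gamma_t)$, and the loop can only be closed because the three-point Taylor inequality~\eqref{eq:error-taylor-three-points} carries the small factor $r^\alpha$ in front of the dominant horizontal term $\gh{x^{-1}y}$, which a single small-ball localization can render less than $1/(2\varrho_0)$.
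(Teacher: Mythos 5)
Your overall architecture mirrors the paper's: sew the vertical component from the germ $\gA_{st}$, set up a Schauder fixed point for the horizontal part $\Phi(\eta)$ using the Taylor remainder bound~\eqref{eq:error-taylor-three-points}, close the self-map estimate through the small factor $r^\alpha$ in front of $\gh{x^{-1}y}$, and conclude injectivity from Lemma~\ref{lem:injective}. The self-map step and the choice of constants are essentially the same as the paper's, modulo the intermediate claim $\bar\eta_t\in\ob{2\veps}{p}$ which you gesture at but do not verify.

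The genuine gap is the continuity of $\Phi$, which the paper explicitly singles out as ``the main technical difficulty \dots much more involved than the standard one working for ODE's.'' You assert that uniform convergence of the germs $\gA^n\to\gA$, together with a uniform defect bound, implies uniform convergence of the sewn functions $\eta^\v_n\to\eta^\v$. That statement is true, but it is \emph{not} a consequence of Lemma~\ref{lem:sewing} as stated, which is a pure existence result: nothing in the lemma tells you the sewn function depends continuously on the germ. The triangle-inequality argument one is tempted to run---bound $|g_t-g_s-\gA_{st}|$ by passing through $f^{n_k}$ and $\gA^{n_k}$ and invoke Remark~\ref{rem:sewing_unique}---fails, because $\|g-f^{n_k}\|_\infty$ and $\|\gA^{n_k}-\gA\|_\infty$ are constants, not $o(|t-s|)$; dividing by $|t-s|$ and letting $|t-s|\to 0$ gives nothing. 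A correct proof requires a quantitative stability estimate. The paper obtains it by applying the sewing lemma (and its uniqueness, Remark~\ref{rem:sewing_unique}) to the germ difference $\gA(\eta)-\gA(\zeta)$, arriving at the bound~\eqref{eq:bound-vertical-difficult}: $|\bar\eta^\v_t-\bar\zeta^\v_t|\le \c\,\|\eta-\zeta\|_{\frac{1+\beta}{2}}$. Note the right-hand side involves the $C^{\frac{1+\beta}{2}}$-norm, not merely $\|\eta-\zeta\|_\infty$; the paper then uses the real interpolation $\|\cdot\|_{\frac{1+\beta}{2}}\le\|\cdot\|_0^{\frac{\alpha-\beta}{1+\alpha}}\|\cdot\|_{\frac{1+\alpha}{2}}^{\frac{1+\beta}{1+\alpha}}$ and works with the $C^{\frac{1+\beta}{2}}$-topology throughout. (On the uniformly $C^{\frac{1+\alpha}{2}}$-bounded set $\V$ the $C^0$- and $C^{\frac{1+\beta}{2}}$-topologies coincide precisely by this interpolation, so your $C^0$ phrasing is not wrong---but it changes nothing about what must be proved.) An alternative route is to trace through the dyadic construction underlying the sewing lemma to get a bound of the form $\|f^n-f\|_\infty\lesssim\|\gA^n-\gA\|_\infty^{\alpha/(1+\alpha)}$; either way, this estimate has to be supplied, and as written your proof does not contain it.
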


The proof relies on an application of the Schauder fixed point theorem, i.e.\ we find a convex invariant set $\V$ for a map $\Phi$ naturally defined by the LSDE. The main technical difficulty, however, is in showing that $\Phi$ is continuous in an appropriate topology: here we deal with an integral, defined implicitly by the sewing lemma, which does not allow us to pass the absolute value inside, as the in the case of Lebesgue integral, which makes the argument  much more involved than the standard one working for ODE's.

\begin{proof}
 For simplicity,  write in what follows positive $\delta$, $\veps$, $\varrho$ to be chosen sufficiently small, yielding $\delta_0$, $\veps_0$ and $\varrho_0$ as in the thesis. Write $I := [-\delta, \delta]$ and fix $q \in \ob{\veps}{p}$. 

{ \emph{Introduction of space and map.}} We introduce the following compact, convex subset $\V$ of $C^{\frac{1+\alpha}{2}}(I; \R^2)$, 
\[ \V = \cur{ \eta = (\eta^1, \eta^2): I \to \R^2 \, \Big| \, \eta_0 = q^\h,\ \norm{\eta}_{\frac{1+\alpha}{2}} \le  \varrho }.
 \]
We define the map $\Phi$, on $\V$, $\eta \mapsto \Phi(\eta)$, in two steps. First, for $\eta \in \V$, we apply Lemma~\ref{lem:sewing} with
\beq\label{eq:A3st}
\gA_{st} := t-s - (\eta^1_t \eta^2_s - \eta^1_s \eta^2_t) \quad \text{for $s, t \in I$}
\eeq
to obtain a (unique) function $f: I \to \R$, such that $f_0 = q^\v$. Then, we define
\begin{equation}\label{eq:defbar-eta} \bar{\eta}_t := (\eta_t, f_t) \in  \H, \quad \text{for $t \in I$,}\end{equation}
and finally set
\begin{equation}\label{eq:definition-phi} \Phi(\eta)_t := q^\h + \nablah  F(p)^{-1} \bra{  R(p, \bar\eta_t)-R(p,q)},\quad\text{ for $t \in I$.}\end{equation}

{ \emph{The map $\Phi$ is well defined:}} it suffices to show that the sewing lemma can be applied to~\eqref{eq:A3st}. Adding and subtracting the quantity $\eta^2_s\eta^1_s$, we rewrite the right hand side in~\eqref{eq:A3st} as
\begin{equation}\label{eq:a3st-young} \gA_{st}  = t-s - \eta^2_s (\eta^1_t -\eta^1_s) + \eta^1_s (\eta^2_t-\eta^2_s),\end{equation}
which shows that $\gA$ is a sum of $t-s$ and two other terms of Young type, i.e.\ as in~\eqref{eq:germ-young}. Therefore, arguing as in~\eqref{eq:delta-young}, we obtain
\[
\gA_{st} - \gA_{su} - \gA_{ut} = -\bra{\eta^2_s - \eta^2_u}\bra{\eta^1_t - \eta^1_u} + \bra{\eta^1_s - \eta^1_u}\bra{\eta^2_t - \eta^2_u},
\]
hence, as in~\eqref{eq:norm-young}, for $s\le u \le t$,
\[ \abs{ \gA_{st} - \gA_{su} - \gA_{ut} } \le \norm{\eta}_{\frac{1+\alpha}{2}}^2 \abs{t-s}^{1+\alpha}
\]
Thus, we are in a position to apply Lemma~\ref{lem:sewing},  obtaining $f:I\to\R$ with $f_0 = q^\v$ and
\begin{equation}\label{eq:error-eta3-sewing-lemma} \abs{ f_t - f_s - \gA_{st} } \le  \kappa \norm{\eta}_{\frac{1+\alpha}{2}}^2 \abs{t-s}^{1+\alpha} \quad \text{for $s, t \in I$.}
\end{equation}

{ \emph{Claim: $\bar \eta_t \in \ob{2\veps}{p}$.}} We provide conditions on $\delta$, $\veps$, $\varrho$ which ensures the claim. By definition~\eqref{eq:defbar-eta} of $\bar{\eta}$ and~\eqref{eq:A3st},
\begin{equation*}
 (\bar{\eta}_s^{-1} \bar{\eta}_t)^\v = \bar{\eta}^\v_t - \bar{\eta}^\v_s + (\eta^1_t \eta^2_s - \eta^1_s \eta^2_t) = f_t - f_s - \gA_{st} + t-s,\end{equation*}
hence~\eqref{eq:error-eta3-sewing-lemma} and the conditions $\abs{t-s} \le 2\delta$, $\norm{\eta}_{\frac{1+\alpha}{2}} \le \varrho$ imply
\[ \abs{  (\bar{\eta}_s^{-1} \bar{\eta}_t)^\v} \le \kappa \varrho^2 \abs{t-s}^{1+\alpha}+ \abs{t-s}   \le \bra{1+ \kappa \varrho ^2 (2\delta)^\alpha } \abs{t-s}\]
so that
\begin{equation}\label{eq:estim_vertical-tilde_eta}
\gv{ \bar{\eta}_s^{-1} \bar{\eta}_t} \le \sqrt{1+ \kappa \varrho^2  (2\delta)^\alpha } \abs{t-s}^{1/2}.
\end{equation}
Since
\begin{equation} \label{eq:estim_horizontal-tilde_eta} \gh{\bar{\eta}_s^{-1} \bar{\eta}_t }   = \abs{ \bra{\eta_t^1 -\eta_s^1, \eta^2_t - \eta^2_s}} \le \norm{\eta}_{\frac{1+\alpha}{2}} \abs{t-s}^{\frac{1+\alpha}{2}} \le \varrho \bra{2\delta}^{\alpha/2}  \abs{t-s}^{1/2},
\end{equation}
the bound between $\dist$ and the sum of the horizontal and vertical gauges~\eqref{eq:equivalence-distance} yields
\begin{equation*}
\dist(\bar{\eta}_s, \bar{\eta}_t)  \le \c \bra{ \varrho (2\delta)^{\alpha/2} + \sqrt{1+ \kappa \varrho^2  (2\delta)^\alpha }}  \abs{t-s}^{1/2}.
\end{equation*}
If $\varrho$ and $\delta$ satisfy
\begin{equation}\label{eq:condition-existence-1}
\varrho (2\delta)^{\alpha/2} \le 1,
\end{equation}
it follows that
\[ \dist(\bar{\eta}_t, \bar{\eta}_s) \le  \c (1+\sqrt{1+\kappa}) \abs{ t-s}^{1/2} \quad \text{for $s$, $t \in I$.}\]
For $s=0$, $\bar \eta_0 = q$, so that $\dist(\bar{\eta}_t, q)  \le  \c (1+\sqrt{1+\kappa}) \delta^{1/2}$. If $\delta$ and $\veps$ satisfy
\begin{equation}\label{eq:condition-existence-2}
 \c (1+\sqrt{1+\kappa}) \delta^{1/2} \le \veps,
\end{equation}
then $\dist(\bar{\eta}_t, p) \le \dist(\bar{\eta}_t, q) + \dist(q,p)$ gives $\bar{\eta}_t \in \ob{2\veps}{p}$, for $t \in I$, i.e.\ the claim.

{ \emph{$\Phi$ maps $\V$ into itself.}} We give further conditions on $\delta$, $\veps$, $\varrho$ to ensure that $\Phi(\V) \subseteq \V$. Since~\eqref{eq:definition-phi} for $t =0$ gives $\Phi(\eta)_0 = q^\h$,  we only have to prove that $\norm{ \Phi (\eta)}_{\frac{1+\alpha}{2}} \le \varrho$.  
%
To this aim, we estimate
\begin{equation*}\begin{split}
\| \Phi(\eta)_t &- \Phi(\eta)_s\|   = \abs{\nablah  F(p)^{-1} \bra{ \rt(p,\bar{\eta}_t)-\rt(p,\bar{\eta}_s)} } \quad \text{by~\eqref{eq:definition-phi}} \\
& \le \abs{\nablah  F(p)^{-1}} \abs{ \rt(p,\bar{\eta}_t)-\rt(p,\bar{\eta}_s)} \\
& \le \c  \abs{\nabla_{\h}  F(p)^{-1} }  \norm{\nablah  F}_{\alpha, \ob{4 \! \c \veps }{p}} \bra{ (2\veps)^\alpha \gh{\eta_s^{-1}\eta_t} + \bra{\gv{\eta_s^{-1}\eta_t} }^{1+\alpha} }\\
& \qquad \qquad \text{by the previous claim and~\eqref{eq:error-taylor-three-points} with  $x =\bar \eta_s$, $y =\bar \eta_t$, $r = 2 \veps$} \\
 & \le \c  \abs{\nabla_{\h}  F(p)^{-1} } \norm{\nablah  F}_{\alpha, \ob{4 \! \c \veps }{p}} \bra{ (2\veps)^\alpha  \norm{\eta}_{\frac {1+\alpha}{2}} + \bra{1+ \kappa \varrho^2  (2\delta)^\alpha }^{\frac {1+\alpha}{2}} }  \abs{t-s}^{\frac{1+\alpha}{2}} \\
 & \qquad \qquad \text{by~\eqref{eq:estim_horizontal-tilde_eta} and~\eqref{eq:estim_vertical-tilde_eta}}\\
 & \le \c  \abs{\nabla_{\h}  F(p)^{-1} } \norm{\nablah  F}_{\alpha, \ob{4 \! \c \veps }{p}} \sqa{ (2\veps)^\alpha  \varrho + \bra{1+ \kappa}^{\frac {1+\alpha}{2} }}  \abs{t-s}^{\frac{1+\alpha}{2}} \quad \text{by~\eqref{eq:condition-existence-1},}
 \end{split} \end{equation*}

We conclude that, if $\delta$, $\veps$ and $\varrho$, satisfy~\eqref{eq:condition-existence-1},~\eqref{eq:condition-existence-2} and
\begin{equation}\label{eq:condition-existence-3}
 \c  \abs{\nabla_{\h}  F(p)^{-1} } \norm{\nablah  F}_{\alpha, \ob{4 \! \c \veps }{p}} \bra{ (2\veps)^\alpha  \varrho + \bra{1+ \kappa}^{\frac {1+\alpha}{2} }} \le \varrho,
\end{equation}
then $\Phi(\V) \subseteq \V$. Let us then fix $\delta=\delta_0$, $\veps=\veps_0$ and $\varrho= \varrho_0>0$ such that these conditions are satisfied: this can be achieved e.g.\ choosing first $\veps_0>0$ and $\varrho_0$ such that~\eqref{eq:condition-existence-3} holds and then choosing $\delta_0>0$ small enough so that both~\eqref{eq:condition-existence-1} and ~\eqref{eq:condition-existence-2} holds. To ensure that any solution (to be obtained) $\gamma$ be injective, we also require that~\eqref{eq:conditions-injective-lemma} in Lemma~\ref{lem:injective} hold with $\delta= \delta_0$ and with $\varrho = \varrho_0$, with $\kappa \varrho^2$ in place of $\norm{E}$. 
%
%

{\emph{Existence and properties of fixed points.}} Let us fix $\beta \in (0,\alpha)$. Taking for granted continuity of $\Phi: \V \to \V$ in the topology of $C^{\frac{1+\beta}{2}}(I; \R^2)$, which will be proven in a further technical step, by compactness of the embedding $C^{\frac{1+\alpha}2}(I; \R^2)$ into $C^{\frac{1+\beta}{2}}(I; \R^2)$ is compact, we apply Schauder fixed point theorem, see e.g.\ \cite[Theorem 11.1]{gilbarg_elliptic_2001}, obtaining some $\eta \in \V$ such that $\Phi(\eta) = \eta$. 

Let us show that  $\gamma= \bar \eta$  defined by~\eqref{eq:defbar-eta} solves the \lsd with $\gamma_0 = q$ and~\eqref{eq:holder-horizontal}. Indeed, by definition of $\bar{\eta}$, we have immediately that $\bar{\eta}_0 = q$ and the first inequality in~\eqref{eq:holder-horizontal} holds because $\eta \in \V$. The ``horizontal'' equation in~\eqref{eq:yde-group} holds for $s$, $t \in I$, because
\[ \bra{\bar{\eta}_s^{-1}\bar \eta _t}^\h = \eta_t - \eta _s = \Phi(\eta)_t - \Phi(\eta)_s = - \nablah F(p)^{-1} \bra{\rt(y,\bar{\eta}_t) - \rt(y,\bar{\eta}_s)},\]
by ~\eqref{eq:definition-phi}. 
For the ``vertical'' equation in ~\eqref{eq:yde-group}, we notice that, by~\eqref{eq:A3st},
\[ \E_{st} = (\gamma_s^{-1}\gamma_t)^\v - (t-s) = \eta^\v_t - \eta^\v_s - \gA^3_{st},\]
hence the second inequality in~\eqref{eq:holder-horizontal} follows from~\eqref{eq:error-eta3-sewing-lemma}, recalling that $f = \eta^\v$ therein. Finally, $\gamma$ is injective on $I$ by Lemma~\ref{lem:injective} in view of the choice of $\delta_0$ and $\varrho_0$.

{\emph{Continuity of $\Phi$.}}   We see that conditions~\eqref{eq:condition-existence-1},~\eqref{eq:condition-existence-2} and~\eqref{eq:condition-existence-3} imply that $\Phi: \V \to \V$ is continuous with respect to the topology induced by the norm $\norm{\cdot}_{\frac{1+\beta}{2}}$, for any $\beta \in (0,\alpha)$.
The argument, relying on a real interpolation, is close to that in \cite[Proposition~5]{gubinelli_controlling_2004}. For $\eta$, $\zeta \in \V$ and $t \in I$, by~\eqref{eq:definition-phi}, we have
\begin{equation*}\begin{split}
\abs{ \Phi(\eta)_t - \Phi(\zeta)_t}&  = \abs{\nablah  F(p)^{-1} \bra{  \rt(p,\bar{\eta}_t)-\rt(p,\bar{\zeta}_t)} } \\
& \le \c\abs{\nablah  F(p)^{-1} }\norm{\nabla_{\h}  F}_{\alpha, \ob{4 \! \c \veps }{p} } (2\veps)^\alpha \bra{  \gh{\bar \zeta_t^{-1}\bar \eta_t} +  \gv{\bar \zeta_t^{-1} \bar \eta_t}   }, \\
\end{split}\end{equation*}
the inequality following  from~\eqref{eq:error-taylor-three-points-weak}, applied with $x = \bar \zeta_t$, $y= \bar \eta_t$ (recall that $\bar{\eta}_t$, $\bar{\zeta}_t \in \ob{2\veps}{p}$).

Denote $\xi_t := \eta_t - \zeta_t$. Since $\zeta_0 = \eta_0 = q^\h$, we have $\xi_0 = 0$ and 
\begin{equation}\label{eq:norm-xi} \gh{\bar \zeta_t^{-1}\bar \eta_t} = \abs{\xi_t} \le \norm{ \xi}_0 \le \delta^{\frac{1+\beta}{2}} \norm{\xi}_{\frac {1+\beta}{2}}.\end{equation}
To estimate the term $\gv{\bar \zeta_t^{-1} \bar \eta_t}$, we notice first that the group operation~\eqref{eq:heisenberg-operation} yields
\[ (\bar \zeta_t ^{-1} \bar \eta_t)^\v = (\bar{\eta}_t^\v - \bar{\zeta}_t^\v) + (\eta^1_t \zeta^2_t - \zeta^1_t \eta^2_t). \]
The second term in the above sum is easily estimated, since
\begin{equation*}
\begin{split}
\abs{ \eta^1_t \zeta^2_t - \zeta^1_t \eta^2_t } & = \abs{ \bra{\eta^1_t -\zeta^1_t} \zeta^2_t - \zeta^1_t \bra{ \eta^2_t- \zeta^2_t}}  = \abs{ \xi^1_t \zeta^2_t -\zeta^1 \xi^2_t }
\le \abs{\zeta_t} \abs{\xi_t}\\
 &\le  \bra{|q^\h|+\varrho \delta^{\frac{1+\alpha}{2}}}  \delta^{\frac{1+\beta}{2}} \norm{\xi}_{\frac {1+\beta}{2}},
\end{split}
\end{equation*}
using the last inequality of~\eqref{eq:norm-xi} and
\begin{equation}
\label{eq:norm-zeta}
\sup_{t \in I} \abs{ \zeta_t}  \le |{q^\h}| + \norm{\zeta}_{\frac{1+\alpha}{2}} t^{\frac{1+\alpha}{2}} \le |q^\h|+\varrho \delta^{\frac{1+\alpha}{2}}.
\end{equation}
 Hence, we are reduced to find a bound on  $\bar\eta_t^\v- \bar\zeta_t^\v$. Below, we prove that for some constant $\c>0$ independent of $\eta$ and $\zeta$, one has
\begin{equation} \label{eq:bound-vertical-difficult}
\abs{  \bar \eta^\v_t- \bar \zeta^\v_t } \le \c \norm{ \xi}_{\frac{1+\beta}{2}}.
\end{equation}
Once~\eqref{eq:bound-vertical-difficult} is proved, we conclude that 
\[\norm{ \Phi(\eta) - \Phi(\zeta) }_{0} =  \sup_{ t \in I} \abs{ \Phi(\eta)_t - \Phi(\zeta)_t} \le \c \norm{ \xi}_{\frac{1+\beta}{2}},\]
for some (different)  $\c>0$ independent of $\eta$ and $\zeta$.  Using the bound
\[
\norm{ \Phi(\eta) - \Phi(\zeta) }_{\frac{1+\alpha}{2}} \le \norm{ \Phi(\eta) }_{\frac{1+\alpha}{2}} + \norm{\Phi(\zeta) }_{\frac{1+\alpha}{2}} \le 2 \varrho,
\]
together with the interpolation inequality
\[\norm{\cdot}_{\frac{1+\beta}{2}} \le  \norm{\cdot}_0^{\frac{\alpha-\beta}{1+\alpha}} \norm{\cdot}_{\frac{1+\alpha}{2}}^{\frac{1+\beta}{1+\alpha}}\]
finally gives, again for some (different) $\c >0$,
\[ \norm{ \Phi(\eta) - \Phi(\zeta) }_{\frac{1+\beta}{2}} \le \c \norm{ \xi}_{\frac{1+\beta}{2}}^{\frac{\alpha - \beta}{1+\alpha}}  = \c \norm{ \eta - \zeta}_{\frac{1+\beta}{2}}^{\frac{\alpha - \beta}{1+\alpha}},\]
which yields continuity of $\Phi$.

\emph{Proof of~\eqref{eq:bound-vertical-difficult}.}  This follows from another application of the sewing lemma, but using its uniqueness part. Indeed, we denote $\gA_{st}(\eta)$ (resp.\ $\gA_{st}(\zeta)$) the right hand side of~\eqref{eq:A3st} (resp.\ with $\zeta$ instead of $\eta$) and define (only) here 
\[ \gA_{st} := \gA_{st}(\eta) - \gA_{st}(\zeta).\]

Since $\bar \eta^\v$ and $\bar \zeta^\v$ are both built via sewing lemma, we  have
\[\frac{ \abs{ (\bar\eta_t^\v - \bar\zeta_t^\v)- (\bar\eta_s^\v - \bar\zeta_s^\v) - \gA_{st}}}{\abs{t-s}} \le \frac{\abs{ \bar \eta_t ^\v- \bar \eta_s ^\v- \gA_{st}(\eta)}}{\abs{t-s}} +  \frac{ \abs{ \bar \zeta_t ^\v- \bar \zeta_s ^\v- \gA_{st}(\zeta)}}{\abs{t-s}} \to 0\]
as $\abs{t-s} \to 0$. Next, we check that the sewing lemma applies to $\gA_{st}$, but looking for a quantitative bound in terms of $\xi$. Uniqueness will imply the required estimate. To this aim, recalling that $\gA_{st}(\eta)$, $\gA_{st}(\zeta)$ can be rewritten as in~\eqref{eq:a3st-young}, recollecting all the terms that appear, we see that $\gA_{st}$ is given by the difference between
\begin{equation} \label{eq:first-term-continuity} \zeta^2_s \bra{\zeta^1_t -\zeta^1_s}  -  \eta^2_s  \bra{ \eta^1_t -\eta^1_s}\end{equation}
and an analogous term,
\begin{equation}\label{eq:second-term-continuity}  \zeta^1_s \bra{ \zeta^2_t-\zeta^2_s}- \eta^1_s \bra{ \eta^2_t-\eta^2_s}.\end{equation}
Adding and subtracting the quantity $\bra{\zeta^1_t -\zeta^1_s} \eta^2_s$ in~\eqref{eq:first-term-continuity}, we transform it into
\begin{equation}\label{eq:first-term-continuity-clever} 
 - \xi_s^2 \bra{\zeta^1_t -\zeta^1_s} + \eta^2_s\bra{ \xi^1_t - \xi^1_s }.\end{equation}
The absolute value of the latter expression can be bounded from above by
\begin{equation*}\begin{split} \sup_{u\in I} \abs{\xi_u} & \norm{\zeta}_{\frac {1+\alpha}{2}} \abs{t-s}^{\frac{1+\alpha}{2}}  + \sup_{u\in I}\abs{\eta_u} \norm{\xi}_{\frac {1+\beta}{2}} \abs{t-s}^{\frac{1+\beta}{2}} \le  \\
& \le  
  \bra{  \varrho  (2 \delta)^{\frac{1+\alpha}{2}}  + \bra{ |q^\h|+\varrho \delta^{\frac{1+\alpha}{2}}}    }(2\delta)^{\frac{1 + \beta}{2}}\norm{\xi}_{\frac{1+\beta}{2}} \quad \text{by~\eqref{eq:norm-xi},~\eqref{eq:norm-zeta} with $\eta$ instead of $\zeta$} \\
 & \le  
  \bra{ |q^\h|+2 \varrho  (2 \delta)^{\frac{1+\alpha}{2}}}(2\delta)^{\frac{1 + \beta}{2}}\norm{\xi}_{\frac{1+\beta}{2}}.  \\ 
   \end{split}
 \end{equation*}
Arguing with~\eqref{eq:second-term-continuity} in a similar way,  we conclude that for $s$, $t \in I$,
\begin{equation}\label{eq:bound-ga-continuity} \abs{ \gA_{st} } \le 2\bra{ |q^\h| + 2 \varrho (2\delta)^{\frac{1+\alpha}{2}} }  (2\delta)^{\frac{1+ \beta}{2}}\norm{\xi}_{\frac{1+\beta}{2}}.\end{equation}

Now we estimate $\gA_{st} - \gA_{su}-\gA_{ut}$,  for $s$, $t$, $u \in I$,  with $s\le u \le t$. Arguing as in~\eqref{eq:germ-young},~\eqref{eq:delta-young} of Remark~\ref{rem:young-integral}, we see that~\eqref{eq:first-term-continuity-clever} yields a contribution to this quantity equal to
\[-\bra{\xi_s^2- \xi_u^2} \bra{\zeta^1_t -\zeta^1_u}  +\bra{\eta^2_s - \eta^2_u} \bra{ \xi^1_t - \xi^1_u },\]
the absolute value of which is estimated from above by
\[ \bra{ \norm{\zeta}_{\frac{1+\alpha}{2}} +  \norm{\eta}_{\frac{1+\alpha}{2}}} \norm{\xi}_{\frac{1+\beta}{2}} \abs{t-s}^{1 + \bra{\alpha+ \beta}/2} \le 2 \varrho \norm{\xi}_{\frac{1+\beta}{2}} \abs{t-s}^{1 + \bra{\alpha+ \beta}/2}.  \]
If we argue with the contribution of~\eqref{eq:second-term-continuity} in a similar way, we conclude that 
\[  \abs{\gA_{st} - \gA_{su}-\gA_{ut}} \le 4\rho \norm{\xi}_{\frac{1+\beta}{2}} \abs{t-s}^{1 + \bra{\alpha+ \beta}/2}.\]

Thus, we are in a position to apply Lemma~\ref{lem:sewing}, obtaining a (unique) function $f: I \to \R$ such that $f_0 =0$ and
\begin{equation}\label{eq:sewing-continuity}\abs{ f_t - f_s - \gA_{st}} \le 4 \kappa \rho \norm{\xi}_{\frac{1+\beta}{2}} \abs{t-s}^{1 + \bra{\alpha+ \beta}/2}, \quad \text{for $s$, $t \in I$.}\end{equation}
with $\kappa =\kappa(\bra{\alpha+ \beta}/2)$. From Remark~\ref{rem:sewing_unique} it follows that $f = \bar \eta^\v- \bar \zeta^\v$. By~\eqref{eq:bound-ga-continuity} and~\eqref{eq:sewing-continuity}, with $s=0$, $t \in I$, we conclude that
\[ \abs{  \bar \eta^\v_t- \bar \zeta^\v_t } \le \sqa{ 4 \kappa \varrho \delta^{1+\bra{\alpha+\beta}/{2}}  +   2\bra{ |q^\h| +  2(2\delta)^{\frac{1+\alpha}{2}} \varrho }} (2\delta)^{\frac{1+ \beta}{2}}\norm{\xi}_{\frac{1+\beta}{2}}, \]
as claimed.
\end{proof}

The proof of Theorem~\ref{thm:existence} yields the following sensitivity result. Actually, one could provide an alternative proof of Theorem~\ref{thm:existence} by proving first the following result and then approximating $F$ in $C^{1,\alpha}_\h(\O, \R^2)$ with a sequence of smooth functions $\cur{F^n}_{n\ge 1}$.

\begin{corollary}[Sensitivity]\label{cor:compactness}
 Let $p \in \H$ be a nondegenerate point for $F\in \conetwo$ and let $\cur{F^n}_{n\ge 1} \subseteq \conetwo$ converge to $F$ in $\conetwo$. Then, there exist $\bar n \ge 1$ and positive $\delta_0$, $\veps_0$, $\varrho_0$ such that, for any $n \ge\bar n$ and $q^n \in \ob{\veps_0}{p}$, there is an injective solution $\gamma^n:[-\delta_0, \delta_0] \to \H$  to the \lsd associated to $F^n$, with $\gamma_0^n = q^n$ and
\begin{equation}\label{eq:holder-horizontal-stability} 
  \|\bra{\gamma^n}^\h \|_{\frac{1+\alpha}{2}} \le  \varrho_0 \quad \text{and} \quad \norm{\E^n} \le \kappa  \|\bra{\gamma^n}^\h \|_{\frac{1+\alpha}{2}}^2.
\end{equation}
Moreover, the family $(\gamma^n)_{n\ge1}$ is compact (with respect to uniform convergence) and any limit point is an injective solution $\gamma$ to the \lsd associated to $F$, such that~\eqref{eq:holder-horizontal} holds.
\end{corollary}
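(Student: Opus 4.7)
\emph{Plan.} My approach is to apply Theorem~\ref{thm:existence} uniformly in $n$, extract a subsequential limit by Arzel\`a--Ascoli, and then pass to the limit in the \lsd. For uniform existence, I inspect the proof of Theorem~\ref{thm:existence} and observe that $F$ enters the conditions~\eqref{eq:condition-existence-1}--\eqref{eq:condition-existence-3} and~\eqref{eq:conditions-injective-lemma} fixing $\delta_0, \veps_0, \varrho_0$ only through $|\nablah F(p)^{-1}|$ and $\norm{\nablah F}_{\alpha, \ob{4\c\veps_0}{p}}$. Since $F^n \to F$ in $\conetwo$ forces $\nablah F^n \to \nablah F$ uniformly on every bounded set, for $n$ large $\nablah F^n(p)$ is invertible with $|\nablah F^n(p)^{-1}| \le 2|\nablah F(p)^{-1}|$ and $\norm{\nablah F^n}_{\alpha, \ob{4\c\veps_0}{p}}$ stays bounded uniformly in $n$. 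I can therefore fix a single triple $(\delta_0,\veps_0,\varrho_0)$ for which the conditions hold simultaneously for all $n \ge \bar n$, and apply Theorem~\ref{thm:existence} to $F^n$ with initial point $q^n \in \ob{\veps_0}{p}$ to obtain the desired injective $\gamma^n$ satisfying~\eqref{eq:holder-horizontal-stability}.

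\emph{Compactness.} The horizontal components of $\gamma^n$ are uniformly $(1+\alpha)/2$-H\"older by~\eqref{eq:holder-horizontal-stability}. Reusing the derivation of~\eqref{eq:estim_vertical-tilde_eta}, which only exploits the bound on the error term, I obtain $\gv{(\gamma^n_s)^{-1}\gamma^n_t} \le C |t-s|^{1/2}$ uniformly in $n$. Together with~\eqref{eq:equivalence-distance} this yields equicontinuity of $(\gamma^n)$ with respect to $\dist$; combined with $\gamma^n_0 = q^n \in \ob{\veps_0}{p}$, Arzel\`a--Ascoli produces a subsequence $\gamma^{n_k} \to \gamma$ converging uniformly on $[-\delta_0,\delta_0]$.

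\emph{Passage to the limit.} I would pass to the limit in the two parts of~\eqref{eq:yde-group} separately. In the horizontal equation the left-hand side is continuous in the path via the group operation, while on the right $\nablah F^{n_k}(p)^{-1} \to \nablah F(p)^{-1}$ and the Taylor remainder $R^{n_k}(p,\gamma^{n_k}_t)$ converges to $R(p,\gamma_t)$: indeed each ingredient of~\eqref{eq:taylor-definition} converges uniformly on the fixed ball $\ob{2\veps_0}{p}$ that contains all the $\gamma^{n_k}_t$ (by definition of convergence in $\conetwo$), and $\gamma^{n_k}_t \to \gamma_t$. The vertical equation then forces $\E_{st}$ to be the pointwise limit of $\E^{n_k}_{st}$, and the uniform estimate $|\E^{n_k}_{st}| \le \kappa \varrho_0^2 |t-s|^{1+\alpha}$ from~\eqref{eq:holder-horizontal-stability} passes to the limit as $\norm{\E} \le \kappa \varrho_0^2$. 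The horizontal bound in~\eqref{eq:holder-horizontal} then follows by lower semicontinuity of the H\"older seminorm under uniform convergence, and the injectivity of $\gamma$ from Lemma~\ref{lem:injective}, whose hypotheses~\eqref{eq:conditions-injective-lemma} are met with $\kappa\varrho_0^2$ in place of $\norm{\E}$ by the very choice of $\delta_0, \varrho_0$.

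\emph{Main obstacle.} The critical technical point is the convergence $R^{n_k}(p,\gamma^{n_k}_t) \to R(p,\gamma_t)$ at moving evaluation points: this requires both the strong mode of $\conetwo$-convergence of $F^n$---uniform convergence on bounded sets combined with $\alpha$-H\"older convergence of the horizontal gradient---and the uniform equicontinuity of $(\gamma^n)$ extracted in the compactness step, which keeps all evaluation points within a common compact subset of $\H$ where the convergence is effective.
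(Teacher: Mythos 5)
Your proposal follows essentially the same route as the paper's proof: verify that the constraints fixing $\delta_0,\veps_0,\varrho_0$ in Theorem~\ref{thm:existence} depend on $F$ only through quantities that converge under $\conetwo$-convergence, so a single triple works for $F$ and all $F^n$ with $n$ large; rerun the existence argument for each $F^n$; extract a uniformly convergent subsequence via the uniform H\"older estimates implied by~\eqref{eq:holder-horizontal-stability} and the vertical equation; and pass to the limit in~\eqref{eq:yde-group} using uniform convergence of $\nablah F^n(p)^{-1}$ and of the Taylor remainders $\rt^n(p,\cdot)$ on the fixed ball containing all the $\gamma^n_t$, together with lower semicontinuity of $\norm{\E}$ and Lemma~\ref{lem:injective}. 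This matches the paper's proof in structure and in all essential technical points.
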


\begin{proof}
First, we notice that the constant(s) $\c$ appearing in~\eqref{eq:condition-existence-2} and~\eqref{eq:condition-existence-3} are independent of $F$. As $n \to \infty$, since $\norm{\nablah F^n - \nablah F}_{\alpha, \A} \to 0$ for every bounded $U \subseteq \H$, choosing $\A = \ob{4 \! \c }{p}$, for $n$ large enough, $\nablah F^{n}(p)^{-1}$ exists and 
\[ \nablah F^n(p)^{-1} \to \nablah F^n(p)^{-1} \quad \text{and} \quad \norm{\nablah F^n}_{\alpha, \ob{4 \! \c }{p}} \to \norm{\nablah F}_{\alpha, \ob{4 \! \c }{p}}.\]
Therefore, if we choose $\delta_0$, $\veps_0$, $\varrho_0$ such that~\eqref{eq:condition-existence-1},~\eqref{eq:condition-existence-2} and~\eqref{eq:condition-existence-3} hold true for $F$ as strict inequalities, as well as~\eqref{eq:conditions-injective-lemma}, with $\kappa \varrho_0^2$ instead of  $\norm{E}$, then there exists $\bar n \ge 1$ such that for any $n \ge \bar n$, strict inequalities hold in the analogues of~\eqref{eq:condition-existence-1},~\eqref{eq:condition-existence-2} and~\eqref{eq:condition-existence-3}, as well as~\eqref{eq:conditions-injective-lemma}, with $F^n$ instead of $F$.

Then, the arguments in the proof of Theorem~\ref{thm:existence} apply for $F^n$ and provide existence of some injective solution $\gamma^n: [-\delta_0, \delta_0] \to \H$  of the \lsd associated to $F^n$, with $\gamma^n_0 = q^n \in \ob{\veps_0}{p}$. Moreover, the analogues of~\eqref{eq:holder-horizontal} with $F^n$ and $\E^n$ (instead of $F$ and $\E$)  hold uniformly in $n \ge \bar n$. This yields compactness for $\gamma^n: [-\delta_0, \delta_0] \to \H$, with respect to uniform convergence. Indeed, given $s$, $t \in [-\delta_0, \delta_0]$ we have $\gamma^n_t \in \ob{4 \! \c \veps_0 }{p}$ and, by~\eqref{eq:equivalence-distance},
\begin{equation*}
\begin{split} \dist(\gamma^n_s, \gamma^n_t) &\le \c \bra{ \gh{(\gamma^n_s)^{-1}\gamma^n_t} + \gv{(\gamma^n_s)^{-1}\gamma^n_t}} \\
& \le \c \bra{ \varrho_0 \abs{t-s}^{\frac{1+\alpha}{2}}+ \sqrt{ \abs{t-s}+ \kappa \varrho_0^2 \abs{t-s}^{1+\alpha}} },
\end{split}\end{equation*}
using the  ``vertical'' equation and the uniform bound on $\norm{\E^n}$.

Finally, to show that any limit point $\gamma$ of $\cur{\gamma^n}_{n \ge 1}$ solves the \lsd associated to $F$, we recall that $\nablah F^n(p)^{-1} \to \nablah F^n(p)^{-1}$ and notice that $\rt^n(p,x)$,  defined by~\eqref{eq:taylor-definition} with $F^n$ in place of $F$ converge to $\rt(p,x)$ uniformly in $\ob{4 \! \c \veps_0}{p}$, hence both equations in~\eqref{eq:yde-group} pass to the limit along any converging subsequence $\cur{\gamma^{n_k}}_k$.  Moreover,~\eqref{eq:error} immediately yields that $\E \mapsto \norm{\E}$ is lower semicontinuous with respect to uniform convergence of $\gamma$, hence in the limit $\norm{\E} \le \kappa \varrho_0^2$. Finally, $\gamma$ is injective on $[-\delta_0, \delta_0]$ because of Lemma~\ref{lem:injective} and our choice of $\delta_0$ and $\varrho_0$, so that~\eqref{eq:conditions-injective-lemma} holds.
\end{proof}

\section{Parametrization of level sets}


In this section, we prove that any solution to the \lsd provides a local parametrization of the level set of $F$, where it is concentrated, see Remark~\ref{rem:concentration}. The argument relies on the following two lemmas. 

\begin{lemma}[``Horizontal'' injectivity]\label{lem:uniqueness1}
Let $p \in \H$ be a nondegenerate point for $F\in \conetwo$. Then, there exists $\veps_1>0$ such that, whenever
$x,y \in \ob{\veps_1}{p}$ satisfy
\[ F(x) = F(y) \quad \text{and} \quad \gv{ x^{-1}y } \le  \gh{ x^{-1} y},  \]
we must have $x =y$. In particular, the second condition holds  if $[x^{-1}y]^\v =0$.
\end{lemma}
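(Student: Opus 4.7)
The plan is to exploit the Taylor identity~\eqref{eq:taylor-three-points} based at $p$, which already played a key role in the definition of the LSDE. Since $F(x) = F(y)$ and $\d_{\h}F(p)(z) = \nablah F(p)\, z^\h$ by~\eqref{eq:d-nabla}, the identity gives
\[ 0 = F(y) - F(x) = \nablah F(p) \bra{x^{-1}y}^\h + \bra{\rt(p,y) - \rt(p,x)}, \]
and since $\nablah F(p)$ is invertible (by nondegeneracy),
\[ \bra{x^{-1}y}^\h = -\nablah F(p)^{-1} \bra{\rt(p,y) - \rt(p,x)}. \]
This reduces the statement to a quantitative estimate on the Taylor remainder difference.

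Next I would apply the sharp Taylor estimate~\eqref{eq:error-taylor-three-points} with $r = \veps_1$, which yields
\[ \gh{x^{-1}y} \le \abs{\nablah F(p)^{-1}} \cdot \c\, \norm{\nablah F}_{\alpha, \ob{2\!\c\veps_1}{p}} \bra{\veps_1^\alpha \gh{x^{-1}y} + \bra{\gv{x^{-1}y}}^{1+\alpha}}. \]
The second ingredient is the assumption $\gv{x^{-1}y} \le \gh{x^{-1}y}$, combined with the bound $\gh{x^{-1}y} \le \c\, \dist(x,y) \le 2\c\veps_1$ coming from~\eqref{eq:equivalence-distance} and $x,y \in \ob{\veps_1}{p}$. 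This allows one to absorb the vertical term into the horizontal one:
\[ \bra{\gv{x^{-1}y}}^{1+\alpha} \le \bra{\gh{x^{-1}y}}^{1+\alpha} \le (2\c\veps_1)^\alpha \gh{x^{-1}y}. \]

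Putting both estimates together I would obtain an inequality of the form $\gh{x^{-1}y} \le C(p,F)\, \veps_1^\alpha\, \gh{x^{-1}y}$, where $C(p,F)$ depends on $\abs{\nablah F(p)^{-1}}$, on $\norm{\nablah F}_{\alpha, \ob{2\!\c\veps_1}{p}}$, on $\c$ and $\alpha$, but not on $x$ or $y$. Choosing $\veps_1 > 0$ small enough so that $C(p,F)\, \veps_1^\alpha < 1$ forces $\gh{x^{-1}y}=0$, whence the hypothesis gives $\gv{x^{-1}y}=0$ as well; by~\eqref{eq:equivalence-distance} this means $x^{-1}y$ is the identity, i.e.\ $x=y$. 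The final ``in particular'' clause is immediate: $[x^{-1}y]^\v = 0$ makes $\gv{x^{-1}y}=0$, so the hypothesis $\gv{x^{-1}y}\le\gh{x^{-1}y}$ holds trivially.

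The only mild subtlety is the circular dependence of $C(p,F)$ on $\veps_1$ through $\norm{\nablah F}_{\alpha, \ob{2\!\c\veps_1}{p}}$; however, since this H\"older seminorm is monotone nondecreasing in $\veps_1$ and finite on any bounded neighborhood of $p$ by the local regularity assumption, one may first fix some reference radius (say $\veps_1 \le 1$), freeze the seminorm at that radius as an upper bound, and then shrink $\veps_1$ to beat the factor $\veps_1^\alpha$. No serious obstacle is expected.
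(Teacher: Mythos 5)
Your proposal is correct and follows essentially the same path as the paper's proof: reduce via the Taylor identity~\eqref{eq:taylor-three-points}, bound the remainder difference, and absorb a factor of $\veps_1^\alpha$ to force $\gh{x^{-1}y}=0$. The only cosmetic difference is that you start from the sharp estimate~\eqref{eq:error-taylor-three-points} and manually convert $\bra{\gv{x^{-1}y}}^{1+\alpha}$ into a small multiple of $\gh{x^{-1}y}$, whereas the paper goes straight to the weaker form~\eqref{eq:error-taylor-three-points-weak}, which already packages $\gh{\cdot}+\gv{\cdot}$ with the $\veps^\alpha$ factor.
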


\begin{proof}
For simplicity, write throughout the proof $\veps>0$ to be specified below, yielding $\veps_1$ as in the thesis. Using the condition $F(x) = F(y)$ in~\eqref{eq:taylor-three-points}, we obtain
\[
 \bra{x^{-1}y }^\h = -\nablah  F(p)^{-1} \bra{\rt(p,y)-\rt(p,x)}.
\]
From~\eqref{eq:error-taylor-three-points-weak}, with $\veps$ instead of $r$, we have
\[\gh{x^{-1}y } \le\c \abs{\nablah  F(p)^{-1}} \norm{ \nablah F}_{\alpha, \ob{2\! \c \veps}{p} } \veps^\alpha \bra{ \gh{x^{-1}y} +{\gv{x^{-1}y} }}.\]
If $\veps_1=\veps>0$ is chosen so that
\begin{equation}\label{eq:condition-injectivity} \c \abs{\nablah  F(p)^{-1}} \norm{ \nablah F}_{\alpha, \ob{2\! \c \veps}{p} } \veps^\alpha < \frac 1 2,\end{equation}
then
\[\gh{x^{-1}y } < \frac 1 2  \bra{ \gh{x^{-1}y} +{\gv{x^{-1}y} }} \le \gh{x^{-1}y}\]
using $\gv{ x^{-1}y } \le  \gh{ x^{-1} y}$, which implies $\gh{x^{-1}y}=0$ hence also $\gv{x^{-1}y}=0$, i.e.\ $x=y$.
\end{proof}
%
%
%
\begin{lemma}\label{lem:uniqueness2}
Let $I \subseteq \R$ be an interval with $0 \in \accentset{\circ}{I}$ and $\gamma: I \to \H$ be continuous, with
\begin{equation}\label{eq:surjectivity-proof-bound} \gh{\gamma_0^{-1} \gamma_t }   \le \varrho \abs{t}^{\frac{1+\alpha}{2}} \quad \text{and} \quad \abs{ \bra{\gamma_0^{-1}  \gamma_t}^\v -t } \le \varrho^2 \abs{t}^{{1+\alpha}} \quad \text{for $t \in I$,}\end{equation}
for some $\varrho >0$. Then, there exists $\delta_2>0$ such that $[-\delta_2, \delta_2]\subseteq I$ and the following holds: for any $\delta \in (0, \delta_2]$, there is $\veps_2 = \veps_2(\delta)>0$ such that, if $x\in \ob{\veps_2}{\gamma_0}$, then 
\[\gv{\gamma_{t}^{-1}x} \le \gh{ \gamma_{t}^{-1} x},\quad \text{for some $t = t(x) \in [-\delta, \delta]$.}\]
\end{lemma}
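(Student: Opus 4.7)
The natural strategy is to reduce the claim to an intermediate value argument applied to the continuous real-valued function
\[
\Psi:[-\delta_2,\delta_2]\to \R, \qquad \Psi(t):= (\gamma_t^{-1} x)^\v,
\]
defined on a symmetric interval contained in $I$. Indeed, a zero $t_0$ of $\Psi$ automatically yields $\gv{\gamma_{t_0}^{-1}x}=0 \le \gh{\gamma_{t_0}^{-1}x}$, which is the inequality sought in the conclusion. Thus, once $[-\delta_2,\delta_2]\subseteq I$ is arranged, the entire task is to guarantee that $\Psi(\delta)$ and $\Psi(-\delta)$ have opposite signs, for every $\delta\in(0,\delta_2]$, provided $x$ is sufficiently close to $\gamma_0$.

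Using the Heisenberg product~\eqref{eq:heisenberg-operation} one computes
\[
\Psi(t) = x^3 - \gamma_t^3 + \gamma_t^2 x^1 - \gamma_t^1 x^2,
\]
and, after adding and subtracting the obvious terms involving $\gamma_0$, rewrites this in the telescopic form
\[
\Psi(t) = (\gamma_0^{-1}x)^\v - (\gamma_0^{-1}\gamma_t)^\v + (\gamma_0^2 - x^2)(\gamma_t^1 - \gamma_0^1) + (x^1 - \gamma_0^1)(\gamma_t^2 - \gamma_0^2).
\]
The two hypotheses on $\gamma$ provide $|(\gamma_0^{-1}\gamma_t)^\v-t|\le \varrho^2|t|^{1+\alpha}$ and $|\gamma_t^\h-\gamma_0^\h|\le \varrho|t|^{(1+\alpha)/2}$, while the assumption $x\in\ob{\veps_2}{\gamma_0}$, combined with~\eqref{eq:equivalence-distance}, gives $|(\gamma_0^{-1}x)^\v|\le (\c\veps_2)^2$ and $|x^\h-\gamma_0^\h|\le \c\veps_2$. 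Collecting these bounds at $t=\pm\delta$ yields an estimate of the type
\[
|\Psi(\pm\delta)\pm\delta|\ \le\ (\c\veps_2)^2\ +\ \varrho^2\delta^{1+\alpha}\ +\ 2\c\varrho\,\veps_2\,\delta^{(1+\alpha)/2}.
\]

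The parameters then separate very naturally. First, I choose $\delta_2>0$ small enough that $[-\delta_2,\delta_2]\subseteq I$ and $\varrho^2\delta_2^{\alpha}<1/3$, so that the second term on the right hand side above is strictly less than $\delta/3$ for every $\delta\in(0,\delta_2]$; note that this choice depends only on $\varrho$ and $\alpha$. Then, with such a $\delta$ fixed, I pick $\veps_2=\veps_2(\delta)>0$ so small that the remaining two contributions, both of which tend to $0$ as $\veps_2\to 0$ at a fixed $\delta$, are each smaller than $\delta/3$. This produces $|\Psi(\pm\delta)\pm\delta|<\delta$, hence $\Psi(-\delta)>0>\Psi(\delta)$, and the intermediate value theorem applied to the continuous function $\Psi$ delivers the required $t=t(x)\in[-\delta,\delta]$ with $\Psi(t)=0$.

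The only mildly delicate point is the orderly bookkeeping of the three error contributions and the order in which the parameters are chosen: the ``$\gamma$-only'' error of order $\varrho^2|t|^{1+\alpha}$ must be absorbed into the leading term $\pm\delta$ \emph{before} $\veps_2$ is introduced (so $\delta_2$ can depend only on $\varrho$ and $\alpha$), whereas the ``$x$-only'' quadratic term and the ``mixed'' term are handled afterwards, with $\veps_2$ legitimately allowed to depend on the already fixed $\delta$.
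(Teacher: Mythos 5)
Your proof is correct, and it takes a genuinely different route from the paper's. You apply the intermediate value theorem directly to the continuous scalar function $\Psi(t) = (\gamma_t^{-1}x)^\v$ on the fixed symmetric interval $[-\delta,\delta]$, obtaining a zero and thus the stronger conclusion $\gv{\gamma_t^{-1}x}=0$. The paper instead performs a case analysis on the size and sign of $x^\v$ relative to $(\gh{x})^2$, dispatching the case $\gv{x}\le \gh{x}$ immediately with $t=0$, and otherwise applies the intermediate value theorem to the ``balance'' function $G(t)=(\gamma_t^{-1}x)^\v\mp\bigl(\gh{\gamma_t^{-1}x}\bigr)^2$ on the one-sided, $x$-dependent interval between $0$ and $2x^\v$, landing on a point where $\gv{\gamma_t^{-1}x}=\gh{\gamma_t^{-1}x}$ holds with equality. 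Your version is more symmetric and avoids the case split, at the cost of a slightly more careful bookkeeping of the three error contributions (which you handle correctly, noting that the $\varrho^2\delta^{1+\alpha}$ term must be absorbed before $\veps_2$ is introduced so that $\delta_2$ depends only on $\varrho$ and $\alpha$). The paper's version, by working with $G$ and the interval $[0,2x^\v]$, tracks the location of $t(x)$ more explicitly (it is pinned between $0$ and $2x^\v$ and has the same sign as $x^\v$), which could be useful for finer arguments but is not needed for the stated conclusion. Both proofs are equally rigorous; yours is arguably the cleaner exposition of the same underlying idea.
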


\begin{remark}
Solutions to the \lsd satisfy~\eqref{eq:surjectivity-proof-bound}, up to restricting their interval of definition, by Lemma~\ref{lem:modulus-continuity} below.
\end{remark}

\begin{remark}[Comparison with the Euclidean case]
The above lemma, which seems intricated, becomes obvious if formulated for classical situation of a continuously differentiable curve in Euclidean space $\R^3$ (instead of $\H$), which means replacing the second condition in~\eqref{eq:surjectivity-proof-bound} by requiring $\dot \gamma^3_0 =1$ (the first condition in ~\eqref{eq:surjectivity-proof-bound} is then unnecessary). In fact, one can obtain $t=t(x)$ such that $x^3 = \gamma_t^3$.
\end{remark}


\begin{proof}
For simplicity, write throughout the proof $\delta$, $\veps$ to be specified below, yielding $\delta_2$, $\veps_2$ as in the thesis. Without any loss of generality, we assume $\gamma_0 = 0$: the general case follows from reducing to the curve $\gamma_0^{-1}\gamma_t$.
Using the group operation~\eqref{eq:heisenberg-operation}, we write for $t \in I$,
\[  (\gamma_t^{-1}x)^\v  = x^\v - \gamma_t^\v - \gamma^1_t x^2 + \gamma^2_tx^1 = x^\v - \gamma_t^\v - \gamma^1_t (x^2-\gamma^2_t) + \gamma^2_t(x^1-\gamma^1_t) .\]
The inequality $|ab| \le a^2/4 + b^2$ yields
\begin{equation}\label{eq:uniqueness-upper} \begin{split} (\gamma_t^{-1}x)^\v & \le  x^\v - \gamma_t^\v + \frac{ \abs{\gamma^1_t}^2}{4}+  (x^2-\gamma^2_t)^2 + \frac{\abs{\gamma^2_t}^2}{4} + (x^1-\gamma^1_t)^2 \\
& = x^\v - \gamma_t^\v + \frac{ \bra{\gh{\gamma_t}}^2 }{4} +  \bra{\gh{ \gamma_t^{-1} x}}^2  \\
& \le x^\v - t + 2\varrho^2 \abs{t}^{1+\alpha} +  \bra{\gh{ \gamma_t^{-1} x}}^2, \quad \text{by ~\eqref{eq:surjectivity-proof-bound},} \end{split} \end{equation}
and in the same way
\begin{equation}\label{eq:uniqueness-lower} (\gamma_t^{-1}x)^\v 
\ge x^\v - t - 2\varrho^2 \abs{t}^{1+\alpha} -  \bra{\gh{ \gamma_t^{-1} x}}^2 . \end{equation}
If $\gv{x} \le \gh{ x}$, the thesis follows choosing $t = t(x):=0$, since $\gamma_0 = 0$. Otherwise, i.e.\ if $\gv{x} > \gh{ x}$, we distinguish between the case $x^\v > \bra{\gh{ x}}^2$ and $x^\v < -\bra{\gh{ x}}^2$, taking into account the sign of $x^\v$. In the former case, we introduce the continuous function
\[ G(t):= (\gamma_t^{-1}x)^\v - \bra{\gh{  \gamma_t^{-1} x}}^2   \quad  \text{for $t \in I$.}\]
We have $G(0) = x^\v - \bra{\gh{ x}}^2 >0$. By~\eqref{eq:equivalence-distance} and the condition $x \in \ob{\veps}{0}$ we deduce $0 \le x^\v \le \c^2 \veps^2$, where $\c$ is the constant in~\eqref{eq:equivalence-distance}. Therefore, if $\delta$, $\veps$ are chosen so that
\begin{equation}\label{eq:condition-surjectivity}  [-\delta, \delta] \subseteq I\quad \text{and} \quad \c^2 \veps^2 \le \frac{\delta}{2},\end{equation}
then we have $2 x^\v \in I$ and we estimate from above, using~\eqref{eq:uniqueness-upper},
\[
 G(2x^\v) \le x^\v - (2x^\v) +  2\varrho^2 \abs{2x^\v}^{1+\alpha} = -x^\v + 4 \varrho^2 \delta^\alpha x^\v =( 4 \varrho^2 \delta^\alpha  -1) x^\v.
\]
If $\delta>0$ is such that additionally
\begin{equation}\label{eq:condition-surjectivity-2} 
4 \varrho^2 \delta^\alpha  -1 \le 0,
\end{equation}
then $G(2x^\v) \le 0$ and by continuity we deduce that, for some $t = t(x) \in (0, 2x^\v]$, $G(t) =0$, from which the thesis follows.

Arguing in a symmetric way in the case $x^\v < -\bra{\gh{ x}}^2$, i.e.\ by considering instead
\[ G(t):= (\gamma_t^{-1}x)^\v + \bra{\gh{  \gamma_t^{-1} x}}^2  \quad  \text{for $t \in I$,}\]
 and using~\eqref{eq:uniqueness-lower} instead of~\eqref{eq:uniqueness-upper} we deduce that, if $\delta$, $\veps$ satisfy~\eqref{eq:condition-surjectivity} and~\eqref{eq:condition-surjectivity-2}, there exists $t = t(x)  \in [2x^\v, 0)$ such that  $G(t) = 0$. In conclusion, to obtain the thesis, it is enough to fix $\delta_2$ such that~\eqref{eq:condition-surjectivity-2} holds (with $\delta_2$ instead of $\delta$) and then, for $\delta \in (0, \delta_2]$, let $\veps  = \veps_2(\delta)>0$ which satisfies~\eqref{eq:condition-surjectivity}.
%
\end{proof}

%
%
\begin{proposition}[Surjectivity]\label{prop:surj}
Let $p \in \H$ be a nondegenerate point for $F\in \conetwo$. 
If $I \subseteq \R$ is an interval with $0 \in \accentset{\circ}{I}$, and
\begin{itemize}
\item[(i)] $\gamma: I \to \O$ is continuous and~\eqref{eq:surjectivity-proof-bound} holds, with
\item[(ii)] $F(\gamma_t) = F(\gamma_0)$, for $t \in I$,
\end{itemize}
then there exists $\delta_3>0$, such that $[-\delta_3, \delta_3 ] \subseteq I$ and the following holds: for any $\delta \in (0, \delta_3]$, there is $\veps_3 = \veps_3(\delta)>0$ such that, if $\gamma_0 \in \ob{\veps_3}{p}$, we have
\begin{equation} \label{eq:surjectivity} \gamma([-\delta, \delta])\cap \ob{\veps_3}{p} = F^{-1}\bra{F(\gamma_0) }\cap \ob{\veps_3}{p}. \end{equation}
\end{proposition}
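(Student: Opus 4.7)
The plan is to derive the easy inclusion $\gamma([-\delta,\delta]) \cap \ob{\veps_3}{p} \subseteq F^{-1}(F(\gamma_0)) \cap \ob{\veps_3}{p}$ directly from hypothesis (ii), and then to establish the reverse inclusion by combining Lemma~\ref{lem:uniqueness1} and Lemma~\ref{lem:uniqueness2} with a careful choice of thresholds. The idea is: given a point $x$ on the level set of $F$ close to $p$, first use Lemma~\ref{lem:uniqueness2} to produce $t \in [-\delta,\delta]$ at which the vertical displacement $\gv{\gamma_t^{-1}x}$ is controlled by the horizontal one $\gh{\gamma_t^{-1}x}$; then apply Lemma~\ref{lem:uniqueness1} to conclude $\gamma_t = x$.

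Concretely, I would first estimate $\dist(\gamma_0, \gamma_t)$ uniformly for $t \in [-\delta, \delta]$: hypothesis (i) together with~\eqref{eq:equivalence-distance} gives
\[
\dist(\gamma_0, \gamma_t) \le \c\bra{\varrho\abs{t}^{(1+\alpha)/2} + \sqrt{\abs{t} + \varrho^2\abs{t}^{1+\alpha}}} =: C(\delta) \quad \text{for } \abs{t} \le \delta,
\]
where $C(\delta) \to 0$ as $\delta \to 0$. I would then fix $\delta_3 \in (0, \delta_2]$ with $\delta_2$ from Lemma~\ref{lem:uniqueness2}, and for each $\delta \in (0, \delta_3]$ choose $\veps_3 = \veps_3(\delta) > 0$ small enough so that $\veps_3 \le \veps_1$ (with $\veps_1$ from Lemma~\ref{lem:uniqueness1}), $2\veps_3 \le \veps_2(\delta)$ (with $\veps_2(\delta)$ from Lemma~\ref{lem:uniqueness2}), and $\veps_3 + C(\delta) \le \veps_1$.

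For the nontrivial inclusion, take $x \in F^{-1}(F(\gamma_0)) \cap \ob{\veps_3}{p}$. Since $\gamma_0, x \in \ob{\veps_3}{p}$, the triangle inequality yields $\dist(\gamma_0, x) \le 2\veps_3 \le \veps_2(\delta)$, so Lemma~\ref{lem:uniqueness2} produces some $t = t(x) \in [-\delta, \delta]$ with $\gv{\gamma_t^{-1}x} \le \gh{\gamma_t^{-1}x}$. To apply Lemma~\ref{lem:uniqueness1} to the pair $(\gamma_t, x)$, I note that $x \in \ob{\veps_1}{p}$ by the first constraint and $\dist(\gamma_t, p) \le \dist(\gamma_t, \gamma_0) + \dist(\gamma_0, p) \le C(\delta) + \veps_3 \le \veps_1$ by the third, while $F(\gamma_t) = F(\gamma_0) = F(x)$ by hypothesis (ii). Lemma~\ref{lem:uniqueness1} then forces $\gamma_t = x$, showing $x \in \gamma([-\delta, \delta])$.

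I do not foresee any serious obstacles: all the analytic content is packaged in Lemmas~\ref{lem:uniqueness1} and~\ref{lem:uniqueness2}, and the remaining argument is essentially bookkeeping on thresholds to ensure both lemmas apply simultaneously to the pair $(\gamma_t, x)$. The only subtlety is that $\veps_3$ must be chosen \emph{after} $\delta$, so that $C(\delta) + \veps_3 \le \veps_1$ can be arranged, which is why the statement allows $\veps_3$ to depend on $\delta$.
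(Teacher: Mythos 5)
Your proposal takes essentially the same route as the paper: the easy inclusion $\subseteq$ is hypothesis (ii), and for the reverse one you first apply Lemma~\ref{lem:uniqueness2} to produce $t(x)\in[-\delta,\delta]$ with $\gv{\gamma_t^{-1}x}\le\gh{\gamma_t^{-1}x}$ and then Lemma~\ref{lem:uniqueness1} (with $\gamma_t$ and $x$ as the two points) to force $\gamma_t=x$, using the estimate $\dist(\gamma_0,\gamma_t)\le C(\delta)$ from~\eqref{eq:surjectivity-proof-bound} and~\eqref{eq:equivalence-distance} to keep $\gamma_t$ inside $\ob{\veps_1}{p}$. The logic and the threshold-chasing are the same as in the paper's proof. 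Incidentally, your condition $2\veps_3\le\veps_2(\delta)$ is the right one: since $\gamma_0$ need not coincide with $p$, one only knows $\dist(x,\gamma_0)\le 2\veps_3$, so the factor $2$ is genuinely needed to place $x\in\ob{\veps_2(\delta)}{\gamma_0}$ before invoking Lemma~\ref{lem:uniqueness2}.

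There is, however, one small but genuine gap in your bookkeeping. You "fix $\delta_3\in(0,\delta_2]$" and then, for $\delta\le\delta_3$, ask to "choose $\veps_3>0$ small enough so that $\ldots$ $\veps_3+C(\delta)\le\veps_1$." That last inequality can be satisfied with $\veps_3>0$ only if $C(\delta)<\veps_1$, and the constraint $\delta\le\delta_2$ alone does not guarantee this: $\delta_2$ is produced by Lemma~\ref{lem:uniqueness2} (conditions~\eqref{eq:condition-surjectivity} and~\eqref{eq:condition-surjectivity-2}) and has no a priori relation with $\veps_1$ from Lemma~\ref{lem:uniqueness1}. You must impose the additional requirement that $\delta_3$ be chosen small enough that $C(\delta_3)<\veps_1$; since $C$ is nondecreasing and $C(0^+)=0$, as you yourself observe, this is always possible and then covers every $\delta\in(0,\delta_3]$. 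This is exactly what the paper encodes in~\eqref{eq:condition-surg-level-sets-delta} by demanding $C(\delta)\le\veps_1/2$ as part of the constraint on $\delta$. Once this clause is added to the definition of $\delta_3$, your argument is complete and matches the paper's.
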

\begin{proof}
As usual, we write throughout the proof $\delta$, $\veps$, to be specified below, yielding $\delta_3$, $\veps_3$ such that  the thesis holds.  Let $\veps_1>0$ be provided in Lemma~\ref{lem:uniqueness1} and $\delta_2>0$ be provided by Lemma~\ref{lem:uniqueness2}. Then, if $\delta \in (0, \delta_2]$, let $\veps_2 = \veps_2(\delta)>0$ as provided by  Lemma~\ref{lem:uniqueness2}. Given any $\delta \in (0, \delta_2]$, from~\eqref{eq:equivalence-distance} and~\eqref{eq:surjectivity-proof-bound}, we have, for $t \in [-\delta, \delta]$, 
\begin{equation*}
\begin{split}
\dist(\gamma_t, \gamma_0) & \le \c\bra{ \gh{\gamma_0^{-1} \gamma_t} +  \gv{\gamma_0^{-1} \gamma_t}} \le \c\bra{ \varrho \abs{t}^{\frac{1+\alpha}{2}}+  \sqrt{ \abs{t} + \varrho^2 \abs{t}^{1+\alpha}} }\\
& \le \c\bra{ \varrho \delta^{\frac{1+\alpha}{2}}+  \sqrt{ \delta + \varrho^2 \delta^{1+\alpha}} }
\end{split}
\end{equation*}
Therefore, if $\delta$ satisfies
\begin{equation}\label{eq:condition-surg-level-sets-delta}
\delta \in (0,\delta_2] \quad \text{and} \quad \c\bra{ \varrho \delta^{\frac{1+\alpha}{2}}+  \sqrt{ \delta + \varrho^2 \delta^{1+\alpha}} } \le \frac{\veps_1}{2},
\end{equation}
we deduce that $\dist(\gamma_t, \gamma_0) \le {\veps_1}/{2}$. Hence, if $\veps$ satisfies
\begin{equation}\label{eq:condition-surg-level-sets-delta-2}
\veps \le {\veps_1}/{2} \quad \text{and} \quad \veps \le \veps_2(\delta),
\end{equation}
for $x \in F^{-1}\bra{F(\gamma_0) }\cap \ob{\veps}{p}$, Lemma~\ref{lem:uniqueness2} provides a $t = t(x) \in [-\delta, \delta]$ such that
\[\gv{\gamma_{t}^{-1}x} \le \gh{ \gamma_{t}^{-1} x}.\]
Moreover, if $\gamma_0 \in \ob{\veps}{p}$, then $\dist(\gamma_t, p) \le \dist(\gamma_t, \gamma_0)+ \dist(\gamma_0, p) \le \veps_1$, and Lemma~\ref{lem:uniqueness1} with $x$ in place of $y$ and $\gamma_t$ in place of $x$ therein, we deduce $\gamma_t = x$. Hence, if we choose $\delta_3$ such that~\eqref{eq:condition-surg-level-sets-delta} hold (with $\delta_3$ instead of $\delta$) and then, for $\delta \in (0, \delta_3]$, we choose $\veps_3(\delta)$ such that~\eqref{eq:condition-surg-level-sets-delta-2} holds, we have the inclusion $\supseteq$ in~\eqref{eq:surjectivity}, while the converse inclusion is assumption (ii).
\end{proof}
%

Putting together Theorem~\ref{thm:existence} and Proposition~\ref{prop:surj}, we have the following result concerning the local parametrization of level sets of maps $F \in C^{1,\alpha}_{\h}  (\O, \R^2)$ at nondegenerate points.

\begin{theorem}[Parametrization of level sets]\label{thm:parametrization}
Let $p \in \H$ be a nondegenerate point for $F\in \conetwo$. Then, there exists $\delta_4>0$ such that the following condition holds: for any $\delta \in (0,\delta_4]$, there is an $\veps_4 = \veps_4(\delta)$ such that,  for any $q  \in \ob{\veps_4}{p}$ there is an injective solution to the \lsd on $I = [-\delta, \delta]$  with $\gamma_0= q$ and
\begin{equation}\label{eq:surjectivity-param}  \gamma(I)\cap \ob{\veps_4}{p} = F^{-1}(F(q)) \cap \ob{\veps_4}{p}. \end{equation}
\end{theorem}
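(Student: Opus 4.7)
The plan is to combine Theorem~\ref{thm:existence} (existence of an injective LSDE solution through points near $p$) with Proposition~\ref{prop:surj} (which says such a solution exhausts the level set locally), so the work reduces to verifying that the hypotheses of the latter are met by the solutions produced by the former, and then matching the various constants.

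First, I would invoke Theorem~\ref{thm:existence} to obtain constants $\delta_0, \veps_0, \varrho_0 > 0$ such that for every $q \in \ob{\veps_0}{p}$ there exists an injective solution $\gamma \colon [-\delta_0,\delta_0] \to \H$ of the LSDE with $\gamma_0 = q$ satisfying~\eqref{eq:holder-horizontal}. Such a $\gamma$ is continuous by definition, it is concentrated on $F^{-1}(F(q))$ by Remark~\ref{rem:concentration}, and the two bounds in~\eqref{eq:surjectivity-proof-bound} (with $\gamma_0 = q$) follow directly from~\eqref{eq:holder-horizontal}: since $x \mapsto x^\h$ is a group homomorphism one has
\[ \gh{\gamma_0^{-1}\gamma_t} = \abs{\gamma_t^\h - \gamma_0^\h} \le \varrho_0 \abs{t}^{(1+\alpha)/2}, \]
while the ``vertical'' equation of~\eqref{eq:yde-group} together with~\eqref{eq:error} and the second bound in~\eqref{eq:holder-horizontal} give
\[ \abs{(\gamma_0^{-1}\gamma_t)^\v - t} = \abs{\E_{0t}} \le \kappa \varrho_0^2 \abs{t}^{1+\alpha}. \]
Setting $\varrho := \max\{\varrho_0, \sqrt{\kappa}\,\varrho_0\}$, both inequalities in~\eqref{eq:surjectivity-proof-bound} hold with this common $\varrho$.

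Next, I would apply Proposition~\ref{prop:surj} with the value of $\varrho$ just fixed. This yields $\delta_3 > 0$ and, for each $\delta \in (0,\delta_3]$, some $\veps_3 = \veps_3(\delta) > 0$ so that whenever $\gamma_0 \in \ob{\veps_3}{p}$ and $[-\delta_3,\delta_3] \subseteq I$, the identity~\eqref{eq:surjectivity} holds. Now define
\[ \delta_4 := \min\{\delta_0, \delta_3\}, \qquad \veps_4(\delta) := \min\{\veps_0, \veps_3(\delta)\} \qquad \text{for } \delta \in (0,\delta_4]. \]
For $q \in \ob{\veps_4(\delta)}{p} \subseteq \ob{\veps_0}{p}$, Theorem~\ref{thm:existence} produces an injective solution on $[-\delta_0,\delta_0]$, and its restriction to $[-\delta,\delta]$ is still an injective solution of the LSDE with $\gamma_0 = q$ satisfying the Hölder bounds and hence~\eqref{eq:surjectivity-proof-bound}. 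Since $q \in \ob{\veps_3(\delta)}{p}$, Proposition~\ref{prop:surj} gives $\gamma([-\delta,\delta]) \cap \ob{\veps_3(\delta)}{p} = F^{-1}(F(q)) \cap \ob{\veps_3(\delta)}{p}$; intersecting with the smaller ball $\ob{\veps_4(\delta)}{p}$ yields~\eqref{eq:surjectivity-param}.

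I do not foresee a serious obstacle: the theorem is a packaging of the existence and surjectivity results already proved. The only thing to be careful about is the bookkeeping of the two families of constants, in particular that one may safely restrict a solution produced on the fixed interval $[-\delta_0,\delta_0]$ to any smaller $[-\delta,\delta]$ without losing either injectivity or the bounds~\eqref{eq:holder-horizontal}, which is clear because both conditions are expressed in supremum form over the underlying interval.
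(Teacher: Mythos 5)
Your proof is correct and follows essentially the same route as the paper: Theorem~\ref{thm:existence} supplies the injective LSDE solution with the H\"older bound~\eqref{eq:holder-horizontal}, these bounds are translated into~\eqref{eq:surjectivity-proof-bound} via the group-homomorphism property of the horizontal projection and the vertical equation, and Proposition~\ref{prop:surj} is then invoked to obtain the level-set identity after suitably shrinking the constants. The only minor differences are cosmetic: you make the choice $\varrho=\max\{\varrho_0,\sqrt{\kappa}\,\varrho_0\}$ explicit and spell out the restriction and final intersection steps, which the paper leaves implicit.
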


\begin{proof}
As usual, write $\delta$, $\veps$, throughout the proof, to be specified below, yielding $\delta_4$, $\veps_4>0$ such that the  thesis holds. Let $\delta_0$, $\veps_0$, $\varrho_0$ be as in Theorem~\ref{thm:existence}.  If  $\veps>0$ satisfies
\begin{equation}\label{eq:condition-veps-four-1}
\veps \le \veps_0
\end{equation}
then Theorem~\ref{thm:existence} provides $\gamma: [-\delta_0, \delta_0] \to \H$ that solve the LSDE,  with $\gamma_0 = q$ and~\eqref{eq:holder-horizontal} holds. 
Since $\gamma$ is concentrated on the level set $F^{-1}\bra{F(q)}$, in order to apply Proposition~\ref{prop:surj} with such $\gamma$ and $I = [-\delta_0, \delta_0]$, we have to ensure condition~\eqref{eq:surjectivity-proof-bound}, for some $\varrho>0$. The first inequality in~\eqref{eq:surjectivity-proof-bound} follows immediately from the first bound in~\eqref{eq:holder-horizontal}, with
\begin{equation}\label{eq:first-condition-varro} \varrho \ge \rho_0.\end{equation} The second inequality in~\eqref{eq:surjectivity-proof-bound} follows from the second bound in~\eqref{eq:holder-horizontal}: indeed it is sufficient to recall the definition of $\E_{st}$ in the ``vertical'' equation of~\eqref{eq:yde-group}, and the fact that $\abs{\E_{0t}} \le \norm{\E} \abs{t}^{1+\alpha}$, for $t \in [-\delta_0, \delta_0]$. In particular, we find for $\varrho$ the additional condition
\begin{equation}\label{eq:second-condition-varro} \varrho^2 \ge \kappa \rho_0^2.\end{equation}
 Hence, choosing $\varrho$ to satisfy~\eqref{eq:first-condition-varro} and~\eqref{eq:second-condition-varro}, Proposition~\ref{prop:surj} applies, providing a $\delta_3>0$ (with $\delta_3 \le \delta_0$) such that, for $\delta \in (0, \delta_3]$, if $q \in \ob{\veps_3(\delta)}{p}$, then~\eqref{eq:surjectivity} holds. 
Hence, if we let $\delta_4 := \delta_3$ and for $\delta \in (0,\delta_4]$ choose an $\veps_4$ satisfying~\eqref{eq:condition-veps-four-1} and $\veps_4 \le \veps_3(\delta)$, the thesis follows.
%
\end{proof}

We end this section with the following ``stability'' version of Theorem~\ref{thm:parametrization}. It is interesting to notice that here we do not use uniqueness of solutions the LSDE.

\begin{corollary}[Stability]\label{cor:stability}
Let $p \in \H$ be a nondegenerate point for $F\in \conetwo$ and let $\cur{F^n}_{n\ge 1} \subseteq \conetwo$ converge to $F$ in $\conetwo$. Then, there exist $\bar n\ge 1$, positive $\delta_4$, $\varrho_4$ such that the following holds. For any $\delta \in (0, \delta_4]$, there is $\veps_4 = \veps_4(\delta) >0$, such that, for any $n \ge n_0$ and $q^n \in \ob{\veps_4}{p}$, there is an injective solution $\gamma^n:[-\delta, \delta] \to \H$  to the \lsd associated to $F^n$, with $\gamma_0^n = q^n$,~\eqref{eq:holder-horizontal-stability} and
\begin{equation*} \gamma^n(I)\cap \ob{\veps_4}{p} = (F^n)^{-1}(F^n(q^n)) \cap \ob{\veps_4}{p} \end{equation*}
Moreover, the family $\cur{\gamma^n}_{n\ge1}$ is compact with respect to uniform convergence  and any limit point is an injective solution $\gamma$ to the \lsd associated to $F$, which satisfies $\gamma_0 = q \in \ob{\veps_4}{p}$,~\eqref{eq:holder-horizontal} and~\eqref{eq:surjectivity-param}.
\end{corollary}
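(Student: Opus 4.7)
The overall plan is to rerun the proof of Theorem~\ref{thm:parametrization} in a manner uniform in $n$, by combining Corollary~\ref{cor:compactness} with Proposition~\ref{prop:surj} and checking that all relevant quantitative constants can be chosen independently of $n$ for $n$ sufficiently large. The existence of $\gamma^n$ on $[-\delta_0,\delta_0]$ with $\gamma_0^n=q^n\in\ob{\veps_0}{p}$, the uniform H\"older bound~\eqref{eq:holder-horizontal-stability}, the compactness of $\{\gamma^n\}_{n\ge 1}$ with respect to uniform convergence, and the fact that any limit point is an injective solution for $F$ satisfying~\eqref{eq:holder-horizontal}, are already granted by Corollary~\ref{cor:compactness}. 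Hence only the surjectivity identity needs to be addressed, both for the $\gamma^n$ and for the limit curve $\gamma$.

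To obtain the identity for each $\gamma^n$, I would apply Proposition~\ref{prop:surj} with $I=[-\delta_0,\delta_0]$. Hypothesis (ii) there is the usual concentration on level sets of Remark~\ref{rem:concentration}, applied to $F^n$. Hypothesis (i), i.e.~\eqref{eq:surjectivity-proof-bound}, follows from~\eqref{eq:holder-horizontal-stability} exactly as in the proof of Theorem~\ref{thm:parametrization}: the first inequality is a restatement of the first bound in~\eqref{eq:holder-horizontal-stability}, and the second comes from the ``vertical'' equation of~\eqref{eq:yde-group} combined with $|\E^n_{0t}|\le\norm{\E^n}|t|^{1+\alpha}$; the resulting constant $\varrho$ depends only on $\varrho_0$ and $\kappa$, hence is independent of $n$.

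The crux of the argument is the $n$-uniformity of the constants $\delta_3$, $\veps_3(\delta)$ produced by Proposition~\ref{prop:surj}, which are in turn controlled by the constants $\veps_1$ of Lemma~\ref{lem:uniqueness1} and $\delta_2$, $\veps_2(\delta)$ of Lemma~\ref{lem:uniqueness2}. Lemma~\ref{lem:uniqueness2} is purely geometric and does not involve $F$, so it is automatically uniform in $n$. Lemma~\ref{lem:uniqueness1}, on the other hand, is driven by~\eqref{eq:condition-injectivity}, so its uniformity reduces to the observation that, having fixed $\veps_1>0$ satisfying
\[ \c\,\abs{\nablah F(p)^{-1}}\,\norm{\nablah F}_{\alpha,\ob{2\c\veps_1}{p}}\veps_1^\alpha<\tfrac12\]
as a \emph{strict} inequality, the $\conetwo$-convergence $F^n\to F$ on the bounded set $\ob{2\c\veps_1}{p}$ implies both $\nablah F^n(p)^{-1}\to\nablah F(p)^{-1}$ and $\norm{\nablah F^n}_{\alpha,\ob{2\c\veps_1}{p}}\to\norm{\nablah F}_{\alpha,\ob{2\c\veps_1}{p}}$, so the inequality persists for $F^n$ once $\bar n$ is enlarged. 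This produces $n$-independent constants $\delta_3$, $\veps_3(\delta)$; choosing $\delta_4:=\delta_3$, $\varrho_4:=\varrho_0$ and $\veps_4(\delta):=\min\{\veps_0,\veps_3(\delta)\}$ then yields the parametrization identity for each $\gamma^n$.

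Finally, for any uniform limit $\gamma$ of a subsequence $\gamma^{n_k}$, Corollary~\ref{cor:compactness} ensures that $\gamma$ is an injective solution of the \lsd for $F$ with $\gamma_0=q\in\ob{\veps_4}{p}$ and~\eqref{eq:holder-horizontal}; one further direct application of Proposition~\ref{prop:surj} to $\gamma$ (with the same verification of~\eqref{eq:surjectivity-proof-bound} as above, using~\eqref{eq:holder-horizontal} in place of~\eqref{eq:holder-horizontal-stability}) delivers the identity~\eqref{eq:surjectivity-param}. The main obstacle throughout is isolating the uniform dependence of $\veps_1$ in Lemma~\ref{lem:uniqueness1} on $F^n$; once this is handled no step of the original proof requires uniqueness of the \lsd, consistently with the observation preceding the statement.
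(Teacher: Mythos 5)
Your proposal matches the paper's own argument closely: both combine Corollary~\ref{cor:compactness} with Proposition~\ref{prop:surj}, identify the uniform availability of $\veps_1$ (via a strict inequality in~\eqref{eq:condition-injectivity} plus $\conetwo$-convergence) as the crux, observe that Lemma~\ref{lem:uniqueness2} involves no dependence on $F$, and then chase constants through the proof of Theorem~\ref{thm:parametrization} to get $n$-independent $\delta_3$, $\veps_3(\delta)$. Your choices $\delta_4:=\delta_3$, $\varrho_4:=\varrho_0$, $\veps_4(\delta):=\min\{\veps_0,\veps_3(\delta)\}$ and the final application of Proposition~\ref{prop:surj} to the limit curve are exactly the paper's route, so the proof is correct and not meaningfully different.
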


\begin{proof}
The argument is a combination of Corollary~\ref{cor:compactness} and a simple constants-chasing throughout the results of this section. Indeed, let $\bar{n}$, $\delta_0$ $\veps_0$, $\varrho_0$ be as in Corollary~\ref{cor:compactness}, and let $\varrho_4 = \varrho_0$. Moreover, notice that if $\veps_1$ is chosen so as to have strict inequality in~\eqref{eq:condition-injectivity}, then by convergence of $F^n$ to $F$ in $\conetwo$ we have that, for $n$ large enough (without loss of generality $n \ge \bar{n}$), the thesis of Lemma~\ref{lem:injective} holds for $F^n$, with such $\veps_1$ (as well as for $F$).

With such choices of $\delta_0$, $\veps_0$, $\varrho_0$ and $\veps_1$, if we follow throughout the proof of Theorem~\ref{thm:parametrization} with $F^n$ in place of $F$, we see that  the thesis still holds, using~\eqref{eq:holder-horizontal-stability} instead of~\eqref{eq:holder-horizontal}, provided that $\delta_3$ and $\veps_3(\delta)$ can be made independent of $n$, for $n \ge \bar{n}$ (as well as for $F$).  To show this fact, we notice first that  choosing $\rho>0$ such that~\eqref{eq:first-condition-varro} and~\eqref{eq:second-condition-varro} hold, we have that~\eqref{eq:surjectivity-proof-bound} holds with $\gamma^n$ in place of $\gamma$, for $n \ge \bar n$. As a consequence, in the proofs of Lemma~\ref{lem:uniqueness2} and Proposition~\ref{prop:surj}, the conditions on $\delta_2$, $\veps_2(\delta)$, i.e.~\eqref{eq:condition-surjectivity} and~\eqref{eq:condition-surjectivity-2}, as well as those on $\delta_3$ and $\veps_3(\delta)$, i.e.~\eqref{eq:condition-surg-level-sets-delta} and~\eqref{eq:condition-surg-level-sets-delta-2} can be satisfied uniformly in $n$, for $n \ge \bar n$ (as well as for $F$).
\end{proof}

%
%

\section{Uniqueness of solutions}

In this section, we prove that solutions to the \lsd are unique, for small times, i.e.\ until the first time they leave a sufficiently small neighbourhood of $p$. First, we give a basic result on the modulus of continuity of any solution to the LSDE, showing in particular  that~\eqref{eq:surjectivity-proof-bound} above holds (for small times).

\begin{lemma}[Local modulus of continuity]\label{lem:modulus-continuity}
Let $p \in \H$ be a nondegenerate point for $F\in \conetwo$. Given a solution $\gamma: I \to \O$ to the LSDE, there is an  $\veps>0$ such that, if $\gamma_t \in \ob{\veps}{p}$, for all $t \in I$, then
\begin{equation}\label{eq:modulus-continuity-general}\limsup_{\substack{s,t \in I\\ \abs{t-s} \to 0}} \frac{ \gh{\gamma_s^{-1} \gamma_t}}{\abs{t-s}^{\frac{1+\alpha}{2}}} < \infty \quad \text{and} \quad \limsup_{\substack{s,t \in I\\ \abs{t-s} \to 0}} \frac{ \dist(\gamma_s, \gamma_t)}{\abs{t-s}^{1/2}}< \infty. \end{equation}
In particular, there exists a $\varrho>0$ such that~\eqref{eq:surjectivity-proof-bound} holds, up to replacing $I$ therein with some smaller interval $J \subseteq I$ (but still with $0 \in \accentset{\circ}{J}$). 
\end{lemma}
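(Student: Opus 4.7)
The plan is to exploit the two equations of the LSDE in sequence: first extract a $1/2$-H\"older bound on the vertical gauge directly from the vertical equation, then feed this into the horizontal equation combined with the strong Taylor estimate \eqref{eq:error-taylor-three-points} and absorb the bad term.

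First, I would choose $\veps > 0$ small enough so that, setting $A := \ob{2\c\veps}{p}$,
\[
\c \,|\nabla_\h F(p)^{-1}|\,\|\nabla_\h F\|_{\alpha, A}\,\veps^\alpha \le \tfrac{1}{2},
\]
where $\c$ is the constant appearing in \eqref{eq:error-taylor-three-points}. This is possible by the local boundedness of $\|\nabla_\h F\|_{\alpha,\cdot}$ and the continuity of the bound in $\veps$. Assume henceforth that $\gamma_t \in \ob{\veps}{p}$ for every $t \in I$.

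Next, from the ``vertical'' equation in \eqref{eq:yde-group} and \eqref{eq:error},
\[
\big|(\gamma_s^{-1}\gamma_t)^\v\big| \le |t-s| + \|\E\|\,|t-s|^{1+\alpha},
\]
so for $s,t\in I$ with $|t-s|$ sufficiently small,
\[
\gv{\gamma_s^{-1}\gamma_t} \le \sqrt{1 + \|\E\|\,|t-s|^\alpha}\,|t-s|^{1/2}.
\]
In particular, $\limsup_{|t-s|\to 0}\gv{\gamma_s^{-1}\gamma_t}/|t-s|^{1/2} < \infty$. Now I would invoke \eqref{eq:error-taylor-three-points} with $x=\gamma_s$, $y=\gamma_t$, $r=\veps$, together with the ``horizontal'' equation in \eqref{eq:yde-group}, to obtain
\[
\gh{\gamma_s^{-1}\gamma_t} \le \c\,|\nabla_\h F(p)^{-1}|\,\|\nabla_\h F\|_{\alpha,A}\Big(\veps^\alpha\gh{\gamma_s^{-1}\gamma_t} + \bra{\gv{\gamma_s^{-1}\gamma_t}}^{1+\alpha}\Big).
\]
The first term on the right is absorbed into the left side by the choice of $\veps$, and the second term is $O(|t-s|^{(1+\alpha)/2})$ by the vertical bound. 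This yields the first half of \eqref{eq:modulus-continuity-general}. The second half then follows from \eqref{eq:equivalence-distance}, which gives
\[
\dist(\gamma_s,\gamma_t) \le \c\bra{\gh{\gamma_s^{-1}\gamma_t} + \gv{\gamma_s^{-1}\gamma_t}} = O(|t-s|^{1/2}).
\]

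Finally, for the statement about \eqref{eq:surjectivity-proof-bound}, I would use that the first bound in \eqref{eq:modulus-continuity-general} provides a constant $M$ and a neighbourhood $J\subseteq I$ of $0$ (with $0\in\accentset{\circ}{J}$) on which $\gh{\gamma_0^{-1}\gamma_t}\le M\,|t|^{(1+\alpha)/2}$, while the vertical equation at $s=0$ gives $|(\gamma_0^{-1}\gamma_t)^\v - t| = |\E_{0t}| \le \|\E\|\,|t|^{1+\alpha}$. Choosing $\varrho>0$ with $\varrho \ge M$ and $\varrho^2 \ge \|\E\|$ yields both inequalities in \eqref{eq:surjectivity-proof-bound} on $J$. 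The only delicate point is the first step, the absorption trick, which requires the smallness of $\veps$ to dominate the Taylor constant on the horizontal side; this is the main (and only) obstacle, and it is harmless since it only affects the size of the neighbourhood.
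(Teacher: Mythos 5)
Your proof is correct and follows essentially the same route as the paper's: derive the $1/2$-H\"older bound on $\gv{\gamma_s^{-1}\gamma_t}$ from the vertical equation, then combine the horizontal equation with the Taylor estimate~\eqref{eq:error-taylor-three-points} and an absorption of the $\veps^\alpha\gh{\gamma_s^{-1}\gamma_t}$ term into the left-hand side, which is enabled by the smallness condition on $\veps$. The only difference is cosmetic — you do the vertical bound before the horizontal identity, while the paper does it the other way around — and your handling of~\eqref{eq:surjectivity-proof-bound} matches the paper's.
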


\begin{proof}
From  the ``horizontal'' equation in~\eqref{eq:yde-group} and inequality~\eqref{eq:error-taylor-three-points} applied to $x =\gamma_s$, $y= \gamma_t$,  we have
\begin{equation*}\begin{split}
\gh{\gamma_s^{-1} \gamma_t}   & = \abs{\nablah  F(p)^{-1} \bra{  \rt(p,\gamma_t)-\rt(p,\gamma_s)} } \\
& \le \abs{\nablah F(p)^{-1}} \abs{ \rt(p,\gamma_t)-\rt(p,\gamma_s)}\\
& \le \c \abs{\nablah F(p)^{-1}} \norm{\nablah  F}_{\alpha, \ob{2 \! \c \veps }{p}} \sqa{ r^\alpha \gh{\gamma_s^{-1}\gamma_t} + \bra{\gv{\gamma_s^{-1}\gamma_t} }^{1+\alpha}}
\end{split}\end{equation*}
If $\veps$ satisfies
\[ \c \abs{\nablah  F(p)^{-1} } \norm{\nablah F}_{\alpha,  \ob{2 \! \c \veps }{p}}   \veps^\alpha \le \frac 1 2, \]
we deduce
\begin{equation}\label{eq:horizontal-bound-vertical} \gh{\gamma_s^{-1} \gamma_t} \le 2 \c \abs{\nablah  F(p)^{-1} } \norm{\nablah F}_{\alpha,  \ob{2 \! \c \veps }{p}} \bra{\gv{\gamma_s^{-1}\gamma_t} }^{1+\alpha}.\end{equation}
Since we are interested in the limit as $\abs{t-s} \to 0$, we assume that $\abs{t-s} \le 1$. By~\eqref{eq:yde-group} and~\eqref{eq:error}, we have
 \begin{equation}\label{eq:bound-vertical}
  \abs{\bra{\gamma_s^{-1} \gamma_t}^\v}  \le \abs{t-s} + \norm{\E} \abs{t-s}^{1+\alpha} \le \bra{ 1 + \norm{\E} } \abs{t-s},\,  \end{equation}
  i.e.,   $\gv{\gamma_s^{-1} \gamma_t} \le \sqrt{ 1 + \norm{\E} } \abs{t-s}^{1/2}$,
 which together with~\eqref{eq:horizontal-bound-vertical} yields
\[ \limsup_{\substack{s,t \in I\\ \abs{t-s} \to 0}} \frac{ \gh{\gamma_s^{-1} \gamma_t}}{\abs{t-s}^{\frac{1+\alpha}{2}}}  \le   2 \c \abs{\nablah  F(p)^{-1} } \norm{\nablah F}_{\alpha,  \ob{2 \! \c \veps}{p}} \bra{1 + \norm{\E}}^{(1+\alpha)/2}, \]
which is the first bound in~\eqref{eq:modulus-continuity-general}. The second one  follows then from~\eqref{eq:bound-vertical} and~\eqref{eq:equivalence-distance}. Finally, the claim on the validity of the two inequalities in~\eqref{eq:surjectivity-proof-bound} follows respectively from the first inequality in~\eqref{eq:modulus-continuity-general}, with $s =0$, and from the definition of $\E_{0t}$ and $\norm{\E}$.
%
\end{proof}


\begin{theorem}[Local uniqueness]\label{thm:loc_uniqueness_I}
Let $p \in \H$ be a nondegenerate point for $F\in \conetwo$. Given solutions $\gamma, \bar{\gamma}: I \to \H$ to the \lsd with 
$\gamma_{t_0}=\bar{\gamma}_{t_0} \in \ob{\veps}{p}$, for some $t_0 \in I$, then there is $\veps>0$ such that
\begin{equation}\label{eq:coincidecn} \cur{ t \in I\, : \, \gamma_t = \bar{\gamma}_t}\end{equation}
contains the connected component of $t_0$ in $\cur{ t \in I\, : \, \gamma_t \in \ob{\veps}{p}}$.
\end{theorem}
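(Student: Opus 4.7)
The plan is to fix $\veps>0$ small enough that Lemma~\ref{lem:uniqueness1}, Lemma~\ref{lem:injective} and Lemma~\ref{lem:modulus-continuity} all apply on $\ob{\veps}{p}$ (with uniform constants), denote by $J$ the connected component of $t_0$ in $\{t\in I:\gamma_t\in\ob{\veps}{p}\}$, and set $S:=\{t\in J:\gamma_t=\bar\gamma_t\}$. Since $S$ is closed in $J$ by continuity and contains $t_0$, it suffices to show it is open. So I would fix an arbitrary $t_1\in S$, set $x_0:=\gamma_{t_1}=\bar\gamma_{t_1}$, and work locally around $t_1$.

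By Remark~\ref{rem:concentration} both solutions lie in the level set $F^{-1}(F(x_0))$, and by Lemma~\ref{lem:modulus-continuity} both $\gamma$ and $\bar\gamma$ satisfy~\eqref{eq:surjectivity-proof-bound} in a sufficiently small subinterval around $t_1$. Proposition~\ref{prop:surj}, applied to $\gamma$ re-based at $t_1$, then yields $\delta,\veps'>0$ such that
\[
\gamma([t_1-\delta,t_1+\delta])\cap\ob{\veps'}{x_0}=F^{-1}(F(x_0))\cap\ob{\veps'}{x_0},
\]
and, shrinking $\delta$ if needed, Lemma~\ref{lem:injective} ensures $\gamma$ is injective on $[t_1-\delta,t_1+\delta]$. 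By continuity of $\bar\gamma$ there is $\delta'\in(0,\delta)$ such that $\bar\gamma_t\in\ob{\veps'}{x_0}$ for all $|t-t_1|<\delta'$; hence there is a unique $\tau(t)\in[t_1-\delta,t_1+\delta]$ with $\bar\gamma_t=\gamma_{\tau(t)}$, and the map $\tau$ is continuous with $\tau(t_1)=t_1$ (standard homeomorphism argument on a compact injective continuous image).

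The core step is now to compare the vertical equations of $\gamma$ and $\bar\gamma$. Because $\bar\gamma_s=\gamma_{\tau(s)}$ and $\bar\gamma_t=\gamma_{\tau(t)}$, the two identities
\[
(\gamma_{\tau(s)}^{-1}\gamma_{\tau(t)})^\v=\tau(t)-\tau(s)+\E_{\tau(s),\tau(t)},\qquad (\bar\gamma_s^{-1}\bar\gamma_t)^\v=t-s+\bar\E_{s,t}
\]
have equal left-hand sides, so the function $f(t):=\tau(t)-t$ satisfies
\[
|f(t)-f(s)|\le \|\bar\E\|\,|t-s|^{1+\alpha}+\|\E\|\,|\tau(t)-\tau(s)|^{1+\alpha}.
\]
Applying Lemma~\ref{lem:injective} to $\gamma$ gives $|\tau(t)-\tau(s)|^{1/2}\lesssim \dist(\gamma_{\tau(s)},\gamma_{\tau(t)})=\dist(\bar\gamma_s,\bar\gamma_t)$, and Lemma~\ref{lem:modulus-continuity} applied to $\bar\gamma$ gives $\dist(\bar\gamma_s,\bar\gamma_t)\lesssim |t-s|^{1/2}$, both for small $|t-s|$. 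Combining yields $|\tau(t)-\tau(s)|\le C|t-s|$, and therefore $|f(t)-f(s)|\le C'|t-s|^{1+\alpha}$ near $t_1$. Since $1+\alpha>1$, any function on an interval with such a H\"older bound must be constant; as $f(t_1)=0$, this forces $\bar\gamma=\gamma$ in a neighbourhood of $t_1$ in $J$, showing $S$ is open, hence $S=J$.

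The main obstacle is the quantitative control $|\tau(t)-\tau(s)|\le C|t-s|$: a priori $\tau$ is only continuous, and one must realize that the combination of the ``lower bound'' in Lemma~\ref{lem:injective} with the ``upper bound'' in Lemma~\ref{lem:modulus-continuity} raises the exponent in the bound on $f$ past the critical threshold $1$, which is the only mechanism by which uniqueness is enforced here (the vertical equation alone would only yield $|f(t)|\le C|t-t_0|^{1+\alpha}$, which is too weak). This is also why $\veps$ must be chosen once and for all, small enough that the constants of both lemmas are uniform on $\ob{\veps}{p}$.
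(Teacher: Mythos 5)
Your argument is correct and follows the same strategy as the paper: compare both solutions on a small window via Proposition~\ref{prop:surj}, obtain a reparametrization $\tau$ with $\bar\gamma = \gamma\circ\tau$, use the ``vertical'' equations to show $\tau$ is the identity, and then propagate along the connected component. The only genuine difference is the mechanism for the key intermediate Lipschitz bound on $\tau$. The paper extracts it purely algebraically from the vertical identity $\tau(t)-\tau(s)+\E_{\tau(s)\tau(t)}=t-s+\bar\E_{st}$, by bounding $\abs{\E_{\tau(s)\tau(t)}}\le\norm{\E}(2\delta)^\alpha\abs{\tau(t)-\tau(s)}$ and absorbing it into the left-hand side once $\norm{\E}(2\delta)^\alpha\le 1/2$; you instead obtain it geometrically by chaining the local injectivity estimate of Lemma~\ref{lem:injective} (applied to $\gamma$) with the $1/2$-H\"older modulus of Lemma~\ref{lem:modulus-continuity} (applied to $\bar\gamma$). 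Both routes are sound; the paper's absorption argument is self-contained and uses only the LSDE structure, while yours invokes the two auxiliary lemmas but makes the geometric picture (lower vs.\ upper bound on $\dist$) more explicit. Your subsequent conclusion (a function with H\"older exponent $1+\alpha>1$ on an interval is constant) and the paper's conclusion (a function with derivative identically $1$ must be $t\mapsto t$) are the same observation. The open-closed argument you use to extend from a window to the connected component is a clean reformulation of the paper's final paragraph, with the one caveat, which you tacitly handle, that when you re-base Proposition~\ref{prop:surj} at $t_1$ the balls it furnishes are centered at $p$ rather than at $x_0$, so one needs $\veps$ small enough that $\ob{\veps'}{x_0}\subseteq\ob{\veps_3}{p}$, uniformly for $x_0\in\ob{\veps}{p}$.
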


\begin{proof}
There is no loss in generality if we prove the thesis for $t_0 = 0$, the general case following reducing to solutions of the \lsd $t \mapsto \gamma_{t_0 + t}$, $t \mapsto \bar \gamma_{t_0 + t}$. Lemma~\ref{lem:injective} and  Lemma~\ref{lem:modulus-continuity} give that, possibly up to replacing $I$ with a smaller neighbourhood of $t_0=0$, both $\gamma$ and $\bar\gamma$  satisfy~\eqref{eq:local-injectivity} as well as~\eqref{eq:surjectivity-proof-bound} (without loss of generality, with the same constants $\delta$ and $\varrho$).

Proposition~\ref{prop:surj} applied to $\gamma$ (respectively, to $\bar \gamma$) provide some $\delta_3$ and $\veps_3(\delta)$  (respectively, $\bar{\delta}_3$ and $\bar{\veps}_3(\delta)>0$). If $\delta>0$ satisfies
\begin{equation}\label{condition-uniqueness-delta}
\delta \le \delta_3 \quad \text{and} \quad \delta \le \bar{\delta}_3 
\end{equation}
and $\veps>0$ satisfies
\begin{equation}\label{condition-uniqueness-eps}
\veps \le \veps_3(\delta) \quad \text{and}  \quad \veps \le \bar{\veps}_3 (\delta),
\end{equation}
then Proposition~\ref{prop:surj} gives
\[ \gamma([-\delta, \delta])\cap \ob{\veps}{p} = F^{-1}\bra{F(\gamma_0) }\cap \ob{\veps}{p} = \bar\gamma([-\delta, \delta])\cap \ob{\veps}{p}.\]

In particular, for any $t \in [-\delta, \delta]$, there is $\bar{t} \in [-\delta, \delta]$ such that $\bar{\gamma}_{\bar{t}} = \gamma_{t}$. Such $\bar{t}$ is unique by injectivity of $\bar{\gamma}$, hence the function $t \mapsto \bar t$ is well defined on $[-\delta, \delta]$, with $\bar t = 0$ for $t =0$. 

By the ``vertical'' equation in~\eqref{eq:yde-group}, for $s$, $t \in [-\delta, \delta]$, we have
 \begin{equation}\label{eq:vertical-uniqueness} t-s + \E_{st} = ({\gamma}_s^{-1} {\gamma}_t)^\v = (\bar\gamma_{\bar s}^{-1}\bar \gamma_{\bar t} )^\v= \bar t - \bar s + \bar{\E}_{\bar s \bar t},
 \end{equation}
 hence
\begin{equation}
\label{eq:identity-uniqueness-gamma}  \abs{ \bar t - \bar s} \le \abs{t-s} + \abs{\E_{st}} + \abs{\bar{\E}_{\bar{s}\bar t}} 
 \end{equation}
From~\eqref{eq:error}, we estimate from above
\begin{equation}\label{eq:bar-e}\abs{ \bar{\E}_{\bar s \bar t } }\le \norm{\bar{\E}} \abs{ \bar t - \bar s }^{1+\alpha} \le \norm{\bar{\E}} (2\delta)^\alpha \abs{ \bar t - \bar s },\end{equation}
If $\delta>0$ satisfies additionally
\begin{equation}\label{condition-uniqueness-delta-1}
 \norm{\bar{\E}} (2\delta)^\alpha  \le  1/ 2,\end{equation}
from~\eqref{eq:identity-uniqueness-gamma} and~\eqref{eq:bar-e} we deduce
\begin{equation}\label{eq:uniqueness-lipschitz}  \abs{ \bar t - \bar s} \le  2 \bra{  \abs{t-s} + \abs{\E_{st}}}   \le 2\bra{1+ (2\delta)^\alpha  \norm{{\E}}} \abs{t-s},\end{equation}
using the bound $\abs{ \E_{st } } \le \norm{{\E}} (2\delta)^\alpha \abs{ t - s }$. Combining   ~\eqref{eq:bar-e} and~\eqref{eq:uniqueness-lipschitz}, we find
\[
\abs{ \bar{\E}_{\bar s \bar t}} \le \norm{\bar\E} \abs{ \bar t - \bar s}^{1+\alpha} \le\norm{\bar\E} \sqa{2\bra{1+ (2\delta)^\alpha  \norm{{\E}}}}^{1+\alpha} \abs{ t -  s}^{1+\alpha},
\]
thus
\begin{equation} \label{eq:estimate-error-e-phi}
\limsup_{\substack{s, t \in I \\ \abs{t-s} \to 0}}\frac{ \abs{ \bar{\E}_{\bar s \bar t}} }{\abs{t-s}} = 0.
\end{equation}
Dividing by $t-s$ the leftmost and the rightmost sides in~\eqref{eq:vertical-uniqueness}, we find
\[\frac{\bar t - \bar s }{t-s} = 1 + \frac{ \E_{st}}{t-s} - \frac{\bar{\E}_{\bar s \bar t}}{t-s}.\]
Letting $s \to t$, using~\eqref{eq:estimate-error-e-phi}, we deduce that $t \mapsto \bar t$ is differentiable at any 
$t \in (-\delta, \delta)$, with derivative identically $1$:  it follows that $t = \bar{t}$. 

Hence, we have that if $\delta$  satisfies~\eqref{condition-uniqueness-delta},~\eqref{condition-uniqueness-delta-1} and $\veps$ satisfies~\eqref{condition-uniqueness-eps},  with $\gamma_0 = \gamma_0 \in \ob{\veps}{p}$, then $I \supseteq [-\delta, \delta]$. In particular, in view of~\eqref{eq:local-injectivity}, we have that $\gamma_t = \bar \gamma_t$ coincide up to the first time they leave $\ob{\delta^{1/2}/\varrho}{\gamma_0}$. Hence, if we further assume $\veps \le {\delta^{1/2}}/\bra{{2\varrho}}$, then $\ob{\veps}{p} \subseteq \ob{\delta^{1/2}/\varrho}{\gamma_0}$, so that~\eqref{eq:coincidecn} must contain all the connected component of $t=0$ in  $\cur{ t \in I\, : \, \gamma_t \in \ob{\veps}{p}}$.
%
\end{proof}

\section{Area formula}

In this section, we establish an integral formula for the spherical Hausdorff measure 
of ``vertical'' curves satisfying conditions akin to those of the LSDE.  
Then, we obtain the corresponding area formula for level sets of $F \in \conetwo$, in a neighbourhood of a  nondegenerate point.

 We recall the definition of the $2$-dimensional  spherical Hausdorff measure of a set $\A \subseteq \H$. For an $\veps>0$, set
\begin{eqnarray*}
\cS_{\dist,\veps}^2 (\A) &:= &\inf\cur{ \sum_{i \ge 1} \beta_\dist \,r_i^2\, \colon  \, \A \subseteq \bigcup_{i \ge 1} \ob{r_i}{p_i}, \, \text{and $r_i \le \veps$, for every $i \ge 1$} }, \quad \text{where}\\
 \beta_\dist &:= &\sup_{\dist(0,y)\le 1}\cL^1\lls\set{\sigma \in\R: (0,0,\sigma)\in \ob{1}{y}}\rrs,
\end{eqnarray*}
the $\inf$ running among all families of balls covering $\A$ and $\cL^1$ denoting Lebesgue measure. Then, define the $2$-dimensional spherical Hausdorff measure as
\begin{equation}\label{eq:hausdorff} \cS_\dist^2(\A):= \sup_{\veps>0}\cS_{\dist,\veps}^2 (\A) = \lim_{\veps \to 0^+ } \cS_{\dist,\veps}^2 (\A).\end{equation}
\begin{theorem}[Area formula]\label{thm:area}
Let $I\subseteq \R$ be an interval, let $\gamma: I \to \H$ be injective and such that
\begin{equation}
 \label{eq:curve-area-formula}\gh{ \gamma_s^{-1}\gamma_t }   \le   \sqrt{ |t-s|  \omega(\abs{t-s}) }
 \qandq \abs{ (\gamma_s^{-1}\gamma_t)^\v - \int_s^t \vth(\tau) \d\tau } \le |t-s| \omega(
 \abs{t-s})
\end{equation}
for every $s,t\in I$, where $\vartheta: I \to \R$ is continuous, $\omega: [0, \infty) \to [0, \infty)$ is non-decreasing and $\omega(0^+)=0$. 
Then, for every Borel  $\A \subseteq \H$ and bounded Borel function $u: \O \to \R$, we have
\begin{equation*}
 \cS_\dist ^2 \bra{ \gamma(I) \cap \A} = \gamma_\sharp ( \abs{\vth} \cL^1 \res I)(\A)
\quad \text{and}\quad
\int_{\gamma(I)} u  \d \cS_\dist^2 = \int_I u\bra{\gamma_\tau} \abs{ \vth (\tau)} \d \tau.
\end{equation*}
\end{theorem}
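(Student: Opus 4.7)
The strategy is to prove the measure identity
\[
\cS^2_\dist \res \gamma(I) = \mu, \quad \text{where } \mu := \gamma_\sharp(|\vartheta|\,\cL^1 \res I),
\]
from which both assertions follow: the first by evaluation on $\A$, the second by the standard change-of-variables formula for pushforward measures together with injectivity of $\gamma$. Both sides are Radon measures concentrated on $\gamma(I)$, since $\gamma$ is $1/2$-H\"older continuous in $\dist$ (immediate from the hypotheses via~\eqref{eq:equivalence-distance}) and hence Borel. By the differentiation theorem for Radon measures on metric spaces, the equality is a consequence of
\[
\lim_{r \to 0^+}\frac{\mu(\ob{r}{x})}{\cS^2_\dist(\ob{r}{x}\cap\gamma(I))} = 1 \quad \text{for $\mu$-a.e.\ } x \in \gamma(I).
\]

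The heart of the argument is a blow-up analysis at each $t_0 \in I$ with $\vartheta(t_0) \ne 0$. The hypotheses imply
\[
\delta_{1/\sqrt{|t-t_0|}}(\gamma_{t_0}^{-1}\gamma_t) \longrightarrow (0,0,\sgn(t-t_0)\vartheta(t_0)) \quad \text{as } t \to t_0,
\]
because the horizontal component of $\gamma_{t_0}^{-1}\gamma_t$ has gauge $O(\sqrt{\omega(|t-t_0|)}) = o(\sqrt{|t-t_0|})$, and its vertical coordinate divided by $|t-t_0|$ tends to $\sgn(t-t_0)\vartheta(t_0)$ by continuity of $\vartheta$. Setting $a := \dist(0,(0,0,1))$ and exploiting the $1$-homogeneity of $\dist$, one deduces
\[
\cL^1\bigl(\{t : \gamma_t \in \ob{r}{\gamma_{t_0}}\}\bigr) = \frac{2r^2}{a^2|\vartheta(t_0)|}(1+o(1)),\qquad \mu(\ob{r}{\gamma_{t_0}}) = \frac{2r^2}{a^2}(1+o(1)),
\]
the second identity using the continuity of $\vartheta$ to factor $|\vartheta(t_0)|$ out of the integral. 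Next, the rescaled arc $\delta_{1/r}(\gamma_{t_0}^{-1}\gamma(I)) \cap \ob{1}{0}$ converges, in Hausdorff distance, to a vertical segment of $x^3$-extent $2/a^2$; the $\cS^2_\dist$-measure of such a segment equals its $x^3$-extent by a direct cover-counting argument built on the very definition of $\beta_\dist$, since optimally-centred balls of radius $\rho$ cover vertical sub-segments of $x^3$-extent $\beta_\dist\rho^2$. Propagating this identification (with error $o(1)$) to the arc via the blow-up yields $\cS^2_\dist(\ob{r}{\gamma_{t_0}}\cap \gamma(I)) = \frac{2r^2}{a^2}(1+o(1))$, so the density ratio tends to $1$.

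The remaining case is the set $Z := \{t \in I : \vartheta(t) = 0\}$, which is $\mu$-null by definition and on which the density computation above breaks down; one needs to verify separately that $\cS^2_\dist(\gamma(Z)) = 0$. By continuity of $\vartheta$, for every $\veps > 0$ one has $Z \subseteq \{|\vartheta| < \veps\}$, which on any compact sub-interval of $I$ is a countable union of disjoint intervals $J$; on each $J$, the two inequalities combined with~\eqref{eq:equivalence-distance} give $\dist(\gamma_s,\gamma_t) \le \c\sqrt{(\veps + \omega(|t-s|))|t-s|}$, whence a direct partition-and-cover argument yields $\cS^2_\dist(\gamma(J)) \le C\veps|J|$; summing and sending $\veps \to 0$ concludes. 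The main obstacle I anticipate is the quantitative control required to extract the exact constant $1$ in the density ratio rather than just bounded density; this forces one to match near-optimal covers of the curve with near-optimal covers of the tangent vertical segment, using the dilation invariance built into $\beta_\dist$, rather than a soft argument. Once this is achieved, the differentiation theorem delivers $\mu = \cS^2_\dist \res \gamma(I)$, and both assertions of the theorem follow.
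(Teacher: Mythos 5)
Your blow-up computation (the rescaled arc converges to a vertical segment, and $\mu(\ob{r}{\gamma_{t_0}}) = \tfrac{2r^2}{a^2}(1+o(1))$ when $\vartheta(t_0)\ne 0$) is essentially the same as the one carried out in the paper, and your argument that $\cS^2_\dist(\gamma(\{\vartheta=0\}))=0$ via a partition-and-cover estimate is also close in spirit to the paper's absolute continuity step. However, the concluding step of your proposal has a genuine gap.

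You invoke ``the differentiation theorem for Radon measures on metric spaces'' to reduce the theorem to the centred density statement
$\lim_{r\to 0^+}\mu(\ob{r}{x})/\cS^2_\dist(\ob{r}{x}\cap\gamma(I))=1$.
This is not available here. Differentiation of one Radon measure against another via centred balls requires either the Besicovitch covering property of the ambient metric or a Vitali-type covering property for the reference measure; the Besicovitch property is known to fail for the Carnot--Carath\'eodory distance (and more generally for homogeneous distances) on the Heisenberg group, and you have not established that $\cS^2_\dist\res\gamma(I)$ is doubling, which would be the alternative route. Indeed, establishing the doubling property would require two-sided estimates on $\cS^2_\dist(\ob{r}{x}\cap\gamma(I))$, which brings us to the second gap: you never prove the lower bound on $\cS^2_\dist(\ob{r}{\gamma_{t_0}}\cap\gamma(I))$, and ``propagating the identification with error $o(1)$'' from the Hausdorff limit is not justified, since Hausdorff measure is not continuous under Hausdorff convergence of sets. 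You flag this as ``the main obstacle,'' but the obstacle is left unresolved, and it is precisely the hard part.

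The paper's route is designed to bypass both difficulties at once. Instead of a centred density and a differentiation theorem, it uses the measure-theoretic area formula of \cite{magnani_measure-theoretic_2015}, in which the relevant density is the \emph{Federer} spherical density
$\theta(x)=\sup\bigl\{\limsup_k \mu(\ob{\varrho_k}{y_k})/(\beta_\dist\varrho_k^2)\;:\;\dist(y_k,x)\le\varrho_k\to 0\bigr\}$,
a supremum over all balls \emph{containing} $x$, not balls centred at $x$. Two consequences: first, this density theory is valid without any Besicovitch or doubling assumption, given $\mu\ll\cS^2_\dist\res\gamma(I)$ and finite carrier measure (which the paper proves directly); second, the blow-up then only needs to be performed on $\mu$, not on $\cS^2_\dist\res\gamma(I)$, so the delicate lower bound on $\cS^2_\dist(\ob{r}{x}\cap\gamma(I))$ never arises. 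The constant $\beta_\dist$ in the definition of the spherical Hausdorff measure is also tuned so that this Federer density equals $1$ after blow-up; observe that the \emph{centred} density of $\mu$ at a vertical point is $2/(a^2\beta_\dist)\le 1$ and generically strictly less than $1$, which is a further sign that a centred-density approach would not produce the exact normalisation. If you want to make your argument rigorous you would essentially need to reprove a version of Magnani's theorem, or else switch to the Federer density, as the paper does.
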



\begin{proof}
Without loss of generality, we assume that $I$ is compact (otherwise, the argument follows by covering $I$ with compact intervals). The proof relies on the measure-theoretic area formula \cite[Theorem~11]{magnani_measure-theoretic_2015} on the metric space $(\H, \dist)$, which reads
\begin{equation}\label{eq:area-formula-abstract} \gamma_\sharp ( \abs{\vth} \cL^1 \res I)(\A)= \int_\A \theta(x) \d  \cS_\dist^2 (x), \quad \text{for $\A \subseteq \H$ Borel,}\end{equation}
$\theta(x)$ is the (upper) spherical Federer density of  $\gamma_\sharp ( \abs{\vth} \cL^1 \res I)$ at $x\in \H$.
This density, introduced in \cite{magnani_measure-theoretic_2015},
can be equivalently defined as follows 
\begin{equation}\label{eq:abstract-federer-density}
\theta(x)
= \sup \cur{ \limsup_k \frac{{ \gamma_\sharp( \abs{\vth} \cL^1 \res I)(\ob{\varrho_k}{y_k})}}{\beta_\dist \varrho^2_k} \colon {\dist(y_k, x) \le \rho_k\to 0}}.
\end{equation}
Once we prove that the assumptions in \cite[Theorem~11]{magnani_measure-theoretic_2015} are satisfied, it will suffice to show that $\theta(x) = 1$, for $\cS_\dist^2$-a.e.\ $x \in \gamma(I)$.

\emph{Claim: $\gamma(I)$ has finite $\cS_\dist^2$ measure.}  We prove  the inequality
\beq\label{eq:S^2_dgammaEc1} 
 \cS_\dist^2 \bra{ \gamma(I) } \le \c \int_I  \abs{ \vth(\tau)} \d\tau,
\eeq
for some constant $\c>0$. 
To show~\eqref{eq:S^2_dgammaEc1}, from~\eqref{eq:curve-area-formula},~\eqref{eq:equivalence-distance} and the inequality $(a+b)^2 \le 2 \bra{a^2 + b^2}$, we obtain  that, for some constant $\c >0$, 
\begin{equation*}
 {\dist(\gamma_t, \gamma_s)}^2  \le  \c  \bra{ \gh{ \gamma_s^{-1}\gamma_t} + \gv{ {\gamma_s^{-1}\gamma_t}}}^2 \le 2 \c  \bra{ 2 \abs{t-s} \omega(\abs{t-s}) + \int_s^t \abs{ \vth(\tau)} \d\tau } 
\end{equation*}
For $\delta>0$, choose any partition $t_0< \ldots < t_n$ of $I$ with $\sup_{i=1, \ldots, n} \abs{t_{i} - t_{i-1}} \le \delta$. Since $\omega$ is non-decreasing, we have from the above inequality
\begin{equation}\label{eq:bound-upper-area-formula-partition}\begin{split}
\diam(\gamma([t_{i-1},t_i]))^2&\le 2\c \bra{ 2
 |t_i-t_{i-1}| \omega(\delta)  + \int_{t_{i-1}}^{t_i} |\vth(\tau)| \d\tau} \\
 & \le 2\c \delta \bra{ 2\omega(\delta)   + \sup_{\tau \in I}{\abs{\vth(\tau)}}}.
 \end{split}
\end{equation}
Hence, if we let $\veps(\delta)$ denote the square root of the term in the last line in~\eqref{eq:bound-upper-area-formula-partition}, considering the covering $\gamma(I) \subseteq \bigcup_{i=1}^n \ob{\diam(\gamma([t_{i-1},t_i]) }{\gamma_{t_{i}}}$, we obtain the bound from above
\begin{equation*}\begin{split}
\cS^2_{\dist,\veps(\delta)}(\gamma(I))& \le 2 \beta_\dist \c \sum_{i=1}^n  \bra{2|t_i-t_{i-1}| \omega(\delta) + \int_{t_{i-1}}^{t_i} |\vth(\tau)| \d\tau} \\
& \le 2 \beta_\dist \c \bra{ 2 \cL^1(I) \omega(\delta)+\int_I|\vth(\tau)|\d\tau},
\end{split}\end{equation*}
As $\delta\to0^+$, since $\veps(\delta) \to 0^+$, we obtain~\eqref{eq:S^2_dgammaEc1}  (with $2\beta_\dist \c$ instead of $\c$).

{\emph{Claim: $\gamma_\sharp ( \abs{\vth} \cL^1\res I) \ll \cS^2\res \gamma(I)$.}} To this aim, we decompose 
\begin{equation}\label{eq:decomposition-gamma} \gamma_\sharp ( \abs{\vth} \cL^1):= \sum_{n \in \Z} \gamma_\sharp ( \abs{\vth} \cL^1 \res I_n^+ ) +  \gamma_\sharp ( \abs{\vth} \cL^1\res I_n^-),\end{equation}
where $I_n^+ := \cur{t \in I\, : \, \vth \in [2^{n}, 2^{n+1})}$ and similarly $I_n^- := \cur{t \in I\, : \, \vth \in (-2^{n+1}, -2^{n}]}$. Notice that the set $\cur{\abs{\vth} =0}$ is negligible with respect to $\abs{\vth} \cL^1\res I$.

We fix $n \in \Z$ and prove that $\gamma_\sharp ( \abs{\vth} \cL^1 \res I_n^+ )\ll \cS^2\res \gamma(I)$ (the case $I_n^-$ is analogous). This follows from the following quantitative injectivity inequality
\begin{equation}\label{eq:injectivity-area-formula} \abs{t-s} \le \c\,  \dist(\gamma_s, \gamma_t)^2, \quad \text{for $s \in I_n^+$, $t \in I$ with $\abs{t-s} \le \delta$.}\end{equation}
for some positive $\c = \c(n)$, $\delta = \delta(n)$. Taking it for the moment for granted, we deduce the claim. From~\eqref{eq:injectivity-area-formula}, one has that  $\gamma$ is locally injective on $I_n^+$, i.e.\ $\dist(\gamma_s, \gamma_t) >0$ if $0 < \abs{t-s} < \delta$ and
\begin{equation}\label{eq:transformation-diameters}  \begin{split} { \diam\bra{ \gamma^{-1}(B) \cap I_n^+} }& \le \c\,   \diam\bra{ \gamma\bra{ \gamma^{-1}(B) \cap I_n^+} }^2\\
& \le\c  \, \diam\bra{ B  \cap \gamma(I_n^+) }^2 \le \c \, \diam\bra{B \cap \gamma(I)}^2,
\end{split}\end{equation}
once $\diam(B) \le \delta$. Let now $\A \subseteq \H$ be such that $\cS^2_\dist\bra{\A \cap  \gamma(I)}=0$. By definition~\eqref{eq:hausdorff}, for any $\varrho >0$ we can find $(\ob{r_i}{p_i})_{i \ge 1}$,  such that $\A\cap\gamma(I) \subseteq \bigcup_{i \ge 1} \ob{r_i}{p_i}$, $r_i \le \delta/2$ for every $i \ge 1$ and 
\[ \sum_{i \ge 1} \beta_\dist r_i^2 \le \varrho.\]
The family $\gamma^{-1} \bra{ \ob{r_i}{p_i}} \cap I_n^+$, for $i \ge 1$ provides a covering of $\gamma^{-1}(\A) \cap I_n^+$, and~\eqref{eq:transformation-diameters} implies that each diameter is smaller than $\c \delta^2$, and
\[ \sum_{i\ge 1} \diam\bra{\gamma^{-1} \bra{ \ob{r_i}{p_i}} \cap I_n^+ } \le \c \sum_{i \ge 1}  (2r_i)^2 \le \frac{4 \c}{\beta_\dist} \sum_{i \ge 1} \beta_\dist r_i^2 \le  \frac{4 \c}{\beta_\dist}\varrho. \]
Since $\varrho>0$ is arbitrary, we deduce that the one-dimensional Lebesgue measure of $\gamma^{-1}(\A) \cap I_n^+$ is negligible, and so  $\gamma_{\sharp} ( \abs{\vartheta} \cL^1\res I_n^+ ) (\A) \le 2^n \gamma_{\sharp} ( \cL^1\res I_n^+ )(\A) = 0$.

{\emph{Proof of~\eqref{eq:injectivity-area-formula}:}} let $\sigma: [0, \infty) \to [0, \infty)$ be a non-decreasing modulus of continuity for $\vartheta$ (recall that $\vartheta: I \to \R$ is continuous and $I$ is compact).  If $\delta>0$ satisfies $\sigma(\delta) \le 2^{n}/2$, then for $s \in I_{n}^+$, $t \in I$ with $\abs{t-s} \le \delta$, from
\[ \int_s^t \abs{\vartheta(\tau)- \vartheta(s)} \d \tau \le \sigma(\delta)\abs{t-s}\]
we deduce, 
\[\int_s^t \vartheta(\tau) \d \tau \ge \vartheta(s) \abs{t-s} - \sigma(\delta) \abs{t-s} \ge 2^{n}\abs{t-s}/2,\]since $\vartheta(s) \ge 2^{n}$. By the second inequality in~\eqref{eq:curve-area-formula} and~\eqref{eq:equivalence-distance},
\[ 2^{n}\abs{t-s}/2 \le \int_s^t \vartheta(\tau) \d \tau - \bra{\gamma_s^{-1} \gamma_t}^\v + \bra{\gamma_s^{-1} \gamma_t}^\v \le \abs{t-s}\omega(\abs{t-s}) + \c \dist(\gamma_s, \gamma_t)^2,\]
hence if $\delta>0$ satisfies also $\omega(\delta) \le 2^{n}/4$, we obtain~\eqref{eq:injectivity-area-formula} with $2^{2-n} \c$ instead of $\c$.

{\emph{Measure-theoretic area formula.}}
Up to considering the Borel regular extension of the measure $\gamma_\sharp ( \abs{\vth} \cL^1 \res I)$, i.e.\ letting
\[
E \mapsto \inf_{\substack{A\supset E \\ A\, \text{Borel}}} \int_{\gamma^{-1}(A)}|\vth(\tau)|\d\tau, \quad \text{for $E \subseteq \H$,}
\]
we are now in a position to apply the measure-theoretic area formula \cite[Theorem~11]{magnani_measure-theoretic_2015}, so that~\eqref{eq:area-formula-abstract} holds. To conclude, we show that the spherical Federer density  $\theta(x)$ in~\eqref{eq:abstract-federer-density}  is $1$, for $\cS^2_\dist$-a.e\ $x \in \gamma(I)$. In view of~\eqref{eq:decomposition-gamma}, it is enough to assume $x = \gamma_s$, with $s \in I_n^+$, for some $n \in \Z$ (the case $s \in I_n^-$ being analogous). To simplify notation, we let in what follows $s = 0$ and assume that $x = \gamma_0 = 0$ (the general case can be reduced to this situation by considering the curve $t \mapsto \gamma_s^{-1} \gamma_{s + t}$). Then, we have $\vartheta(0) \in [2^{n}, 2^{n+1})$, in particular $\vartheta(0)>0$. For $\veps>0$, given $\cur{y_k}_{k\ge1} \in \H$, $\cur{r_k}_{k\ge 1}$ with $r_k >0$, $d(0,y_k) \le \veps r_k$ and $r_k \to 0$, as $k \to \infty$, we compute
\begin{equation*}
 \limsup_{k \to \infty} \frac{\gamma_\sharp( \abs{\vth} \cL^1 \res I)(\ob{\veps r_k}{y_k})}{\bra{\veps r_k}^2} = \limsup_{k \to \infty} \frac  1 {\bra{\veps r_k}^2} \int_I \abs{\vth(\gamma_\tau)} \chi_{\ob{\veps r_k}{y_k}}(\gamma_\tau) \d \tau.\end{equation*}
By compactness, we assume
\begin{equation*}
 \lim_{k \to \infty} \delta_{r_k^{-1}} y_k = y \in\ob{\veps}{0}.\end{equation*}
Recall that from~\eqref{eq:injectivity-area-formula} with $s=0$, $\gamma_0 = 0\in I_n^+$,
 \begin{equation}\label{eq:bound-tau} \abs{\tau} \le \c\,  \dist(0, \gamma_\tau)^2, \quad \text{for $\tau \in I$ with $\abs{\tau} \le \delta$,}\end{equation}
 for some positive $\delta$, $\c$  (without loss of generality, $\c$ can be chosen arbitrarily large). Since $\gamma$ is injective and continuous, there exists an $\eta>0$ such that, for every $\tau \in I$, $\abs{\tau} \ge \delta$, then $\dist(0, \gamma_\tau) > \eta$ (otherwise, by compactness, one would obtain a point $\tau \in I$  with $\abs{\tau} \ge \delta$ and $\gamma_\tau = 0$). In view of the inclusion $\ob{\veps r_k}{y_k} \subseteq \ob{2\veps r_k}{0}$, if $k$ is sufficiently large, then $2\veps r_k  \le \eta$ and we obtain, for such $k$, the identity
\begin{equation}\label{eq:integral-change-variables}\begin{split} \int_I &\abs{\vth(\gamma_\tau)} \chi_{\ob{\veps r_k}{y_k}}(\gamma_\tau) \d \tau  = \int_{-\delta}^\delta \abs{\vth(\gamma_\tau)} \chi_{\ob{\veps r_k}{y_k}}(\gamma_\tau) \d \tau \\
& = \int_{\cur{ \abs{\tau} \le 4\c \veps^2 r_k^2}} \abs{\vth(\gamma_\tau)} \chi_{\ob{\veps r_k}{y_k}}(\gamma_\tau) \d \tau  \quad \text{by~\eqref{eq:bound-tau} with $\dist(0, \gamma_\tau) \le 2\veps r_k$, $\tau \le \delta$.}\\
& = r_k^2  \int_{\cur{ \abs{\sigma} \le 4\c \veps^2 }} \abs{\vth(\gamma_{r_k^2 \sigma})} \chi_{\ob{\veps r_k}{y_k}}(\gamma_{r_k^2\sigma}) \d \sigma \quad \text{by substitution $\sigma := \tau/r_k^2$.}
\end{split}
\end{equation}
We have $\gamma_{r_k^2\sigma} \in \ob{\veps r_k}{y_k}$ if and only if $\delta_{r_k^{-1}} \gamma_{r_k^2 \sigma} \in \ob{\veps}{\delta_{r_k^{-1}} y_k}$, and
\[ \lim_{k \to \infty} \delta_{r_k^{-1}} \gamma_{r_k^2 \sigma} = \lim_{k \to \infty} ( \gamma_{r_k^2 \sigma}^\h/r_k, \gamma_{r_k^2 \sigma}^\v / r_k^2) = (0,0,\vartheta(0) \sigma) \in \H\]
by~\eqref{eq:curve-area-formula} with $s = 0$, $\gamma_0 =0$, $t = r_k^2 \sigma$. Therefore,   as $k \to \infty$, the functions $\sigma \mapsto \chi_{\ob{\veps r_k}{y_k}}(\gamma_{r_k^2\sigma})$ converge pointwise to the characteristic function of the set, 
\[  \cur{ \sigma \in\R: (0,0, \vartheta(0) \sigma)\in \ob{\veps}{y}},\]
with the possible exception of the points $\sigma$ such that $(0,0,\vartheta(0) \sigma) \in \partial \ob{\veps}{y}$. By Fatou lemma, from~\eqref{eq:integral-change-variables}, we have
\begin{equation*}\begin{split} \limsup_{k \to \infty} \frac{\gamma_\sharp( \abs{\vth} \cL^1 \res I)(\ob{\veps r_k}{y_k})}{r_k^2} & \le \int_{\cur{ \abs{\sigma} \le 4\c\veps }} \abs{\vth(0)} \chi_{\ob{\veps}{y}}((0,0, \vartheta(0) \sigma)) \d \sigma \\
 & = \int_\R \abs{\vth(0)} \chi_{\ob{\veps}{y}}((0,0, \vartheta(0) \sigma)) \d \sigma \\
& = \cL^1\lls\set{\sigma \in\R: (0,0,\sigma)\in \ob{\veps}{y}}\rrs \le \beta_\dist.
\end{split}\end{equation*}
Dividing by $\veps^2 \beta_\dist$ and letting $\veps=1$, we deduce $\theta(0) \le 1$. To show the converse inequality, let $\veps>1$, choose a maximizing sequence $\cur{y_n}_{n\ge1}$ for $\beta_\dist$ and any infinitesimal sequence $\cur{r_k}_{k\ge1}$. For fixed $n \ge 1$, let $y_k := \delta_{r_k} y_n$, so that $\dist(0, y_k) \le r_k \le \veps r_k$. From Fatou lemma,
\begin{equation}\label{eq:final-area-inequality2}\begin{split} \liminf_{k \to \infty} \frac{\gamma_\sharp( \abs{\vth} \cL^1 \res I)(\ob{\veps r_k}{y_k})}{r_k^2} & \ge \int_{\cur{ \abs{\sigma} \le 4\c\veps }} \abs{\vth(0)} \chi_{\openball{\veps}{y}}((0,0, \vartheta(0) \sigma)) \d \sigma \\
& \ge \cL^1\lls\set{\sigma \in\R: (0,0,\sigma)\in \ob{1}{y}}\rrs \ge \beta_\dist - o(1)
\end{split}\end{equation}
being $o(1)$ infinitesimal as $n \to \infty$. Taking $\veps_n \to 1^+$ and dividing~\eqref{eq:final-area-inequality2} by $\veps^2 \beta_\dist$ with $\veps = \veps_n$, we see that $\theta(0) \ge 1 - o(1)$ as $n \to \infty$, hence the thesis.
\end{proof}


\begin{corollary}[Area formula for LSDE]\label{cor:area-lsde}
Let $p \in \H$ be a nondegenerate point for $F\in \conetwo$, and let $\gamma: I \to \H$ be an injective solution to the \lsd with $\gamma_0 = q$. If $\dist(p,\gamma(I))$ is sufficiently small, then $\gamma$ is parameterized by arc-length, with respect to $\cS^2_\dist$, i.e.\ 
\[ \cS^2_\dist( \gamma(J) ) = \cL^1(J), \quad \text{for every closed $J \subseteq I$}.\]
Moreover, if for some $\veps>0$, one has
\[\gamma(I)\cap \ob{\veps}{p} = F^{-1}(F(q)) \cap \ob{\veps}{p},\]
then for every $\A \subseteq\ob{\veps}{p}$ Borel and for every bounded Borel function $u: \ob{\veps}{p} \to \R$,
\[ \cS^2_\dist\bra{F^{-1}(F(q)) \cap \A} = \int_I \chi_{\A}(\gamma_\tau) \d \tau, \quad\text{and}\quad  \int_{F^{-1}(F(q))} u \d \cS^2_\dist  = \int_I u(\gamma_\tau) \d \tau.\]
\end{corollary}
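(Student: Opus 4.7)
The plan is to reduce the corollary to Theorem~\ref{thm:area} applied with the constant speed $\vartheta \equiv 1$. Both conclusions will then follow, once I verify that any injective LSDE-solution $\gamma$, with range in a sufficiently small neighbourhood of $p$, satisfies the two finite-difference bounds~\eqref{eq:curve-area-formula} with $\vartheta(\tau) \equiv 1$ and a suitable non-decreasing modulus $\omega$.

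The vertical equation in~\eqref{eq:yde-group} can be rewritten as $(\gamma_s^{-1}\gamma_t)^\v - \int_s^t 1\,\d\tau = \E_{st}$, and~\eqref{eq:error} yields $|\E_{st}| \le \|\E\|\, |t-s|^{1+\alpha} = |t-s|\,\omega_1(|t-s|)$ with $\omega_1(r) := \|\E\|\, r^\alpha$. For the horizontal estimate, I would impose that $\dist(p,\gamma(I))$ be small enough that $\gamma(I) \subseteq \ob{\veps}{p}$ for the $\veps>0$ produced by Lemma~\ref{lem:modulus-continuity}; in that regime, the uniform inequality $\gh{\gamma_s^{-1}\gamma_t} \le C\bra{\gv{\gamma_s^{-1}\gamma_t}}^{1+\alpha}$ derived in the proof of that lemma (cf.~\eqref{eq:horizontal-bound-vertical}), combined with $\gv{\gamma_s^{-1}\gamma_t}^2 \le (1+\|\E\|)\,|t-s|$ (valid for $|t-s|\le 1$; the range $|t-s|\ge 1$ is handled by enlarging $\omega$ trivially on bounded scales), yields $\gh{\gamma_s^{-1}\gamma_t}^2 \le |t-s|\,\omega_2(|t-s|)$ with $\omega_2(r) = C' r^\alpha$ near $0$. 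Taking $\omega := \max(\omega_1,\omega_2)$, both inequalities in~\eqref{eq:curve-area-formula} are satisfied.

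Theorem~\ref{thm:area} then delivers, for every Borel $\A \subseteq \H$ and every bounded Borel $u$,
\begin{equation}\label{eq:plan-base}
\cS^2_\dist(\gamma(I)\cap \A) = \gamma_\sharp(\cL^1 \res I)(\A), \qquad \int_{\gamma(I)} u\,\d\cS^2_\dist = \int_I u(\gamma_\tau)\,\d\tau.
\end{equation}
The arc-length statement follows by taking $\A = \gamma(J)$ for a closed $J \subseteq I$: injectivity of $\gamma$ gives $I \cap \gamma^{-1}(\gamma(J)) = J$, hence $\cS^2_\dist(\gamma(J)) = \gamma_\sharp(\cL^1 \res I)(\gamma(J)) = \cL^1(J)$. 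Under the additional coincidence $\gamma(I) \cap \ob{\veps}{p} = F^{-1}(F(q)) \cap \ob{\veps}{p}$, for any Borel $\A \subseteq \ob{\veps}{p}$ one has $F^{-1}(F(q)) \cap \A = \gamma(I) \cap \A$, so the first identity in~\eqref{eq:plan-base} yields the area formula for the level set, while extending $u$ by zero outside $\ob{\veps}{p}$ the second identity gives the integral version. I do not anticipate any real obstacle; the only point that requires care is the passage from the asymptotic bound in Lemma~\ref{lem:modulus-continuity} to a genuinely uniform one, which is precisely what the smallness assumption on $\dist(p,\gamma(I))$ secures.
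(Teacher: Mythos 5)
Your proposal is correct and follows the same route as the paper: reduce to Theorem~\ref{thm:area} with $\vartheta\equiv 1$, observe that the vertical bound in~\eqref{eq:curve-area-formula} is immediate from~\eqref{eq:error} with $\omega(s)=\norm{\E}s^\alpha$, and use the uniform inequality~\eqref{eq:horizontal-bound-vertical} from the proof of Lemma~\ref{lem:modulus-continuity} (valid once $\gamma(I)\subseteq\ob{\veps}{p}$) together with the vertical estimate to obtain the horizontal one. The paper's own proof is merely a compressed version of this; your extra verifications (that one must use the uniform inequality from inside the lemma's proof rather than its $\limsup$ conclusion, and the deduction of the two stated identities from the abstract area formula) are exactly the details being left implicit there.
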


\begin{proof}
Since the second inequality in~\eqref{eq:curve-area-formula} always holds for $\gamma$ solution to the \lsd with $\vartheta =1$ and $\omega(s)=\norm{\E} s^{\alpha}$, we  have to ensure that the first inequality in~\eqref{eq:curve-area-formula} holds (possibly with a different $\omega$). This follows e.g.\ from Lemma~\ref{lem:modulus-continuity}, if $\dist(p,\gamma(I))$ is sufficiently small.
\end{proof}

\begin{remark}\label{rem:large-interval-area}
If $\gamma$ satisfies~\eqref{eq:holder-horizontal} (or a sequence $\gamma^n$ satisfies~\eqref{eq:holder-horizontal-stability}) there is no need to ensure that $\dist(p,\gamma(I))$ is sufficiently small, for~\eqref{eq:curve-area-formula} immediately follows from~\eqref{eq:holder-horizontal}.
\end{remark}

\section{Coarea formula}

In this section, we prove a coarea formula for maps $F \in C^{1,\alpha}_{\h}  (\O, \R^2)$.

\begin{definition}
If $F:\O \to \R^2$ is $\h$-differentiable at $x\in \O$, we define the {\em horizontal Jacobian} $J_{\h}  F(x)$ of $F$ at $x$ setting $J_{\h}  F(x) :=|\det \nabla _{\h}   F(x)|$.
\end{definition}
%
%

%


%
\begin{theorem}[Coarea formula]\label{thm:coarea}
Let $F \in \conetwo$. Then, for every  $\A \subseteq \O$  Borel, 
\begin{equation}\label{eq:coarea} \int_\A J_{\h}   F \d \cL^3 = \int_{\R^2} \cS_\dist ^2\bra{\A \cap F^{-1}(z)} \d \cL^2(z),
\end{equation}
and for every bounded Borel function $u: \O \to \R$,
\begin{equation}\label{eq:coarea-functions}
  \int_\H u\,  J_{\h}   F \d \cL^3 = \int_{\R^2} \int_{F^{-1}(z)} u \d \cS_\dist^2 \d\cL^2(z). 
\end{equation}
\end{theorem}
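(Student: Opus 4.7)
The plan is to prove~\eqref{eq:coarea} (from which~\eqref{eq:coarea-functions} follows by approximation of $u$ with simple functions and monotone convergence) via smooth approximation combined with the stability result (Corollary~\ref{cor:stability}) and the area formula for \lsd{}-parametrized level sets (Corollary~\ref{cor:area-lsde}). First I would pick a sequence of smooth $F^n \in \conetwo$ with $F^n \to F$ in $\conetwo$, obtained, e.g., by group convolution with mollifiers. For each smooth $F^n$, the coarea formula
\[
\int_\A J_{\h} F^n \d \cL^3 = \int_{\R^2} \cS_\dist^2(\A \cap (F^n)^{-1}(z)) \d \cL^2(z)
\]
is classical and can be found in~\cite{magnani_area_2011}, so the task reduces to passing to the limit as $n \to \infty$ in both sides.

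The left-hand side passes to the limit easily: $\nabla_{\h} F^n \to \nabla_{\h} F$ uniformly on the bounded set $\A$ implies $J_{\h} F^n \to J_{\h} F$ uniformly there, and dominated convergence applies. For the right-hand side, I would cover the open set of nondegenerate points of $F$ by countably many closed balls $\ob{\veps_4}{p_j}$ provided by Theorem~\ref{thm:parametrization}. By Corollary~\ref{cor:stability}, for $n$ sufficiently large each level set $(F^n)^{-1}(z) \cap \ob{\veps_4}{p_j}$ is parametrized by an injective \lsd{} solution $\gamma^{n,z,j}\colon [-\delta_4, \delta_4] \to \H$ satisfying the uniform bounds~\eqref{eq:holder-horizontal-stability}, with $\gamma^{n,z,j} \to \gamma^{z,j}$ uniformly along subsequences, where $\gamma^{z,j}$ parametrizes $F^{-1}(z) \cap \ob{\veps_4}{p_j}$. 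Corollary~\ref{cor:area-lsde} together with Remark~\ref{rem:large-interval-area} gives
\[
\cS_\dist^2 \bra{ \A \cap (F^n)^{-1}(z) \cap \ob{\veps_4}{p_j} } = \cL^1 \bra{ \{ t \in [-\delta_4,\delta_4] : \gamma^{n,z,j}_t \in \A \} },
\]
and the uniform convergence of $\gamma^{n,z,j}$ yields the pointwise convergence in $z$ (using inner/outer regularity of $\cL^1$ to handle the boundary of $\A$). The uniform parametrization bounds also provide a dominating integrable majorant in $z$, enabling Lebesgue's dominated convergence theorem.

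The principal obstacle is the treatment of the critical set $\mathrm{Crit}(F) := \{ J_{\h} F = 0\}$, which trivially contributes nothing to the left-hand side, but must also yield zero contribution to the right-hand side, i.e.\ $\cS_\dist^2(\mathrm{Crit}(F) \cap F^{-1}(z) \cap \A) = 0$ for $\cL^2$-a.e.\ $z$. Since $F$ is only $C^{1,\alpha}_{\h}$-regular, classical Sard does not directly apply. I would handle this by applying, for each $\eta > 0$, the coarea identity for the smooth $F^n$ to the sublevel set $\{ J_{\h} F^n < \eta \} \cap \A$, obtaining
\[
\int_{\R^2} \cS_\dist^2 \bra{ \A \cap (F^n)^{-1}(z) \cap \{ J_{\h} F^n < \eta \} } \d \cL^2(z) \;=\; \int_{\A \cap \{ J_{\h} F^n < \eta \}} J_{\h} F^n \d \cL^3 \;\le\; \eta\, \cL^3(\A),
\]
and then recovering the vanishing of the critical-set contribution for $F$ via uniform convergence $J_{\h} F^n \to J_{\h} F$, a Fatou-type inequality, and letting $\eta \to 0^+$. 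Combining this with the countable covering of the nondegenerate set by the balls above concludes~\eqref{eq:coarea}, and hence~\eqref{eq:coarea-functions}.
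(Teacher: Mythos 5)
Your approach---global smooth approximation of $F$ followed by passing to the limit in the classical coarea identity---is genuinely different from the paper's, which computes the $\cL^3$-density of the coarea measure $\nu_F$ at an arbitrary nondegenerate point $p$ via a blow-up argument $F_{p,r}\to \d_{\h}F(p)$ (Lemma~\ref{lem:convergence-blow}) and then invokes the measure-theoretic coarea formula. Unfortunately there are two substantive gaps in your route. The first and most serious one is the opening step: smooth functions are \emph{not} dense in $\conetwo$. Convergence in $\conetwo$ requires $\norm{\nablah F^n - \nablah F}_{\alpha,\A}\to 0$, i.e.\ the \emph{full} $\alpha$-H\"older seminorm of the gradient difference must vanish, and mollification (group or otherwise) does not achieve this: $\nablah(F*\rho_\eps)=(\nablah F)*\rho_\eps$ converges to $\nablah F$ uniformly, and stays uniformly bounded in $C^\alpha$, but converges in $C^\beta$ only for $\beta<\alpha$ unless $\nablah F$ is in the little-H\"older class. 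The paper avoids this entirely: in the blow-up the rescaling contracts distances, giving
$\norm{\nablah F_r - \nablah F(0)}_{\alpha,\ob{\varrho}{0}}\le \norm{\nablah F}_{\alpha,\ob{r\varrho}{0}}\,r^\alpha\to 0$
for free, with no density claim. You could try to salvage the idea by downgrading to $C^{1,\beta}_\h$ with $\beta<\alpha$ (where mollifications do converge) and re-running the LSDE machinery (Theorem~\ref{thm:existence}, Corollary~\ref{cor:stability}, Corollary~\ref{cor:area-lsde}) at exponent $\beta$, but that is a nontrivial modification which you do not indicate.

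The second gap is your Sard-type treatment of the critical set. Passing $n\to\infty$ in
$\int_{\R^2}\cS_\dist^2\bra{\A\cap (F^n)^{-1}(z)\cap\cur{J_\h F^n<\eta}}\d\cL^2(z)\le \eta\,\cL^3(\A)$
requires some lower semicontinuity of $z\mapsto\cS_\dist^2(\A\cap(F^n)^{-1}(z)\cap\cdots)$ along the convergence $F^n\to F$, and spherical Hausdorff measure is not lower semicontinuous under limits of level sets---precisely near critical points there is no good convergence of $(F^n)^{-1}(z)$ to $F^{-1}(z)$, so the ``Fatou-type inequality'' you invoke is not available. The paper sidesteps this by citing the coarea \emph{inequality} of \cite[Theorem~4.2]{magnani_area_2011} (the ``$\le$'' direction up to a constant), which directly yields that the critical set is $\nu_F$-negligible; that reference is available to you as well and would be both simpler and rigorous. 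Finally, the patching of the local LSDE parametrizations over a countable cover of the nondegenerate set, together with the boundary-regularity remark about $\cL^1$, needs more detail (overlaps, choice of base points $q^n$ with $F^n(q^n)=z$, independence of the subsequence from $z$), though this part looks fixable with some care.
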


%
%

Our proof follows the approach introduced in \cite{magnani_area_2011}, in particular a 
 blow-up argument akin to \cite[Theorem~4.1]{magnani_area_2011}, but here we  rely on the parametrization provided by the LSDE. Given $F\in C^{1,\alpha}_{\h}  (\O, \R^2)$, for $p \in \O$ and $r>0$, define  the map
\begin{equation*}
F_{p,r} : \H \to \R^2,  \quad q \mapsto F_{p,r}(q) := \frac{ F\bra{p \delta_r(q) } - F(p)}{r} \quad \text{and set} \quad F_{r}:=F_{0,r}.
\end{equation*}

\begin{lemma}\label{lem:convergence-blow}
As $r \to 0^+$, the maps $\cur{F_{r,p}}_{r>0}$ converge in $C^{1,\alpha}_{\h}  (\O, \R^2)$ to the group homomorphism $\d_{\h}  F(p)$. 
Moreover,~\eqref{eq:coarea} and~\eqref{eq:coarea-functions} hold with the map $\d_{\h}F(p)$ in place of $F$. 
\end{lemma}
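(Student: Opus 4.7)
The plan is to handle the two assertions separately, as they rely on different ingredients. For the convergence $F_{p,r} \to \d_\h F(p)$ in $C^{1,\alpha}_\h(\O, \R^2)$, the critical step is the identity
\[ \nablah F_{p,r}(q) = \nablah F(p \, \delta_r q), \qquad q \in \H, \]
which uses two structural facts: left-invariance of $X_1, X_2$ (so $X_i(F \circ L_p) = (X_i F)\circ L_p$) and their homogeneity of degree $1$ under dilations (so $X_i(f \circ \delta_r) = r\,(X_i f)\circ\delta_r$, verified directly from the explicit form $X_1 = \partial_1 - x^2\partial_3$, $X_2 = \partial_2 + x^1\partial_3$). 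The factor $r$ generated by the dilation scaling cancels with the $1/r$ in the definition of $F_{p,r}$, yielding the identity.

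Once this is available, the proof is routine. For any bounded $A \subseteq \H$, letting $B$ be a bounded set containing $\{p\,\delta_r q : q \in A,\, r \le 1\}$, H\"older continuity of $\nablah F$ on $B$ and left-invariance and $1$-homogeneity of $\dist$ give
\[ |\nablah F_{p,r}(q_1) - \nablah F_{p,r}(q_2)| \le \|\nablah F\|_{\alpha, B}\,\dist(p\delta_r q_1, p\delta_r q_2)^\alpha = r^\alpha\,\|\nablah F\|_{\alpha, B}\,\dist(q_1,q_2)^\alpha. \]
Since $\nablah \d_\h F(p) \equiv \nablah F(p)$ is constant, this means $\|\nablah F_{p,r} - \nablah F(p)\|_{\alpha, A} \le r^\alpha \|\nablah F\|_{\alpha,B} \to 0$. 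Uniform convergence on $A$ follows from the horizontal Taylor inequality \eqref{eq:taylor-two-points}: using $\d_\h F(p)(\delta_r q) = r\,\d_\h F(p)(q)$,
\[ |F_{p,r}(q) - \d_\h F(p)(q)| = \tfrac{1}{r}\, |\rt(p, p\delta_r q)| \le \c\,\|\nablah F\|_{\alpha, B}\, r^\alpha\, \dist(0,q)^{1+\alpha}, \]
which goes to $0$ uniformly on $A$.

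For the second assertion, write $L := \nablah F(p)$, so that $\phi := \d_\h F(p)$ satisfies $\phi(q) = L q^\h$ and $J_\h \phi \equiv |\det L|$. If $L$ is not invertible, the left-hand side of \eqref{eq:coarea} vanishes, while $\phi^{-1}(z)$ is empty for $\cL^2$-a.e.\ $z$ (the range of $\phi$ being contained in a subspace of dimension at most $1$), so the right-hand side also vanishes. If $\det L \neq 0$, then $\phi^{-1}(z) = \{(L^{-1}z, t) : t \in \R\}$ is a vertical line; parametrizing it by $\gamma(t) = (L^{-1}z, t)$, the Heisenberg product gives $\gamma_s^{-1}\gamma_t = (0,0,t-s)$ (the cross term $x^1 y^2 - x^2 y^1$ cancels since the horizontal coordinates are constant). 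Thus $\gamma$ fulfills \eqref{eq:curve-area-formula} with $\vartheta \equiv 1$ and $\omega \equiv 0$, and Theorem~\ref{thm:area} yields $\cS^2_\dist(\A \cap \phi^{-1}(z)) = \cL^1(\{t : (L^{-1}z, t) \in \A\})$.

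Integrating over $z \in \R^2$ and applying the linear change of variables $w = L^{-1}z$ (so that $\d z = |\det L|\, \d w$) together with Fubini gives
\[ \int_{\R^2} \cS^2_\dist(\A \cap \phi^{-1}(z))\, \d\cL^2(z) = |\det L|\, \cL^3(\A) = \int_\A J_\h \phi\, \d\cL^3, \]
which is \eqref{eq:coarea} for $\phi$. The functional version \eqref{eq:coarea-functions} follows by standard monotone-class approximation of bounded Borel $u$ by simple functions. No serious obstacle is expected; the only point requiring care is the computation $\nablah F_{p,r}(q) = \nablah F(p\,\delta_r q)$, which is the place where both the group and dilation structures of $\H$ enter, and the verification that the RHS of \eqref{eq:coarea} vanishes in the degenerate case.
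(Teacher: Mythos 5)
Your proposal is correct and follows essentially the same route as the paper: the same dilation/left-invariance identity $\nablah F_{p,r}(q) = \nablah F(p\,\delta_r q)$ drives the $C^{1,\alpha}_\h$-convergence, and the coarea formula for the blow-up limit is again Fubini plus Theorem~\ref{thm:area} applied to vertical lines. The only (cosmetic) difference is that the paper first reduces to $\nablah F(p) = \operatorname{Id}$ and then rescales, whereas you parametrize $\phi^{-1}(z)$ directly by $\gamma_t=(L^{-1}z,t)$ and do a single linear change of variables, which is slightly cleaner.
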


\begin{proof}
Without loss of generality, we prove the thesis for $p=0$ (the general case follows from considering $q \mapsto F(pq)$). Let  $\varrho>0$ and $q \in \ob{\varrho}{0}$. Then, from~\eqref{eq:taylor-two-points} with $x = 0$, $y = \delta_r(q)$, we deduce
\[ \abs{F_{r}(q)-\d_{\h}  F(0)(q)} \le \c \norm{\nablah F}_{\alpha, \ob{\c r \varrho}{0}} \varrho r^{\alpha} \to 0, \quad \text{as $r \to 0^+$.}\]
To show convergence of the horizontal gradients, we notice first that the definition of $\nablah$ in terms of left-invariant fields $X_1$, $X_2$ yields the identity
\[ \bra{\nablah F_{r}}  q^\h = \nablah F \bra{\delta_r(q) }^\h,\]
and that $\nablah (\d_{\h}  F(0)) = \nablah F(0)$ is  constant. Then,
\[ \norm{\nablah F_{r} - \nablah F(0) }_{\alpha, \ob{\varrho}{0}}  = \norm{ \nablah F\bra{ \delta_r(\cdot) }}_{\alpha, \ob{\varrho}{0}}\le  \norm{ \nablah F }_{\alpha, \ob{r \varrho}{0}} r^{\alpha} \to 0, \quad \text{as $r \to 0^+$.}\]
Finally, to show~\eqref{eq:coarea}  and~\eqref{eq:coarea-functions}, we consider first the case $\d_{\h}F(0)(q) = q^\h$, i.e.\ $\nablah F(0) = \operatorname{Id}$ the identity matrix. For $z \in \R^2$, the level set of $z$ is precisely $\cur{(z, t) \, : \, t\in \R}$, hence coarea formula follows from Fubini's theorem and Theorem~\ref{thm:area} with $\gamma_t = (z,t)$, to obtain
\[ \cS^2_\dist(\cur{(z, t) \, \colon \, t\in \R} \cap \A) = \cL^1( \cur{t \in \R \, \colon \,  (z,t) \in \A}).\]
 If the matrix $\nablah F(0) = M$ is not the identity (but invertible) we consider the map $M^{-1}\d_{\h}  F(0)$  and we reduce to the previous case, using $J_\h (M^{-1} F)= \abs{\det M} J_\h (F)$ and $(M^{-1}F)^{-1}(z) = F^{-1}(Mz)$. When $\nablah F(0)$ is not invertible both integrands are zero (the one in the right hand side for a.e.\ $z \in \R^2$).
\end{proof}
In the next lemma, we use the homotopic invariance of the degree of a continuous map. 

\begin{lemma}[Convergence of images]\label{lem:surjective-blow}
If $p \in \H$ is a nondegenerate point for the map $F\in C^{1,\alpha}_{\h}  (\O, \R^2)$, then for every $\veps>0$ and every compact set  $K \subseteq \d_{\h}  F(p)( \openball{\veps}{p} )$, there is a $\bar{r}>0$ such that, for every $r\in [0,\bar r]$, $K \subseteq F_{p,r}( \openball{ \veps}{p})$. 
%
\end{lemma}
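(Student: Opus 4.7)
The plan is to exploit the $C^{1,\alpha}_\h$-convergence $F_{p,r}\to L:=\d_\h F(p)$ from Lemma~\ref{lem:convergence-blow}, which in particular yields uniform convergence on every bounded subset of $\H$, together with the homotopy invariance of the Brouwer degree in $\R^2$. Since $p$ is nondegenerate, $L$ is a surjective group homomorphism with $L(q)=\nabla_\h F(p)\,q^\h$ and $\nabla_\h F(p)\in GL(2,\R)$, so $L$ factors through the horizontal projection $q\mapsto q^\h$. The idea is to restrict both $L$ and $F_{p,r}$ to a two-dimensional topological disk inside $\openball{\veps}{p}$ on which $L$ is an affine isomorphism, and then argue by planar degree theory.

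First, for each $z_0\in K$ I would fix, by hypothesis, some $q_{z_0}\in\openball{\veps}{p}$ with $L(q_{z_0})=z_0$, pick $\eta>0$ with $\eta<\veps-\dist(q_{z_0},p)$, and consider the parametrization
\[
\phi_{z_0}:\overline{B_\eta(0)}\to\H,\qquad \phi_{z_0}(h):=q_{z_0}\cdot(h,0),
\]
where the product is the Heisenberg operation. Its image $D_{z_0}:=\phi_{z_0}(\overline{B_\eta(0)})$ is a compact two-dimensional disk inside $\openball{\veps}{p}$, since by left-invariance of $\dist$ one has $\dist(\phi_{z_0}(h),p)\le\dist(q_{z_0},p)+|h|<\veps$. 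Because $L$ is a homomorphism depending only on the horizontal part,
\[
L\circ\phi_{z_0}(h)=z_0+\nabla_\h F(p)\,h,
\]
so that $L\circ\phi_{z_0}$ is an affine isomorphism onto a closed ellipse around $z_0$. In particular, the closed set $V_{z_0}:=z_0+\nabla_\h F(p)(\overline{B_{\eta/2}(0)})$ will lie at positive distance $\mu_{z_0}>0$ from $(L\circ\phi_{z_0})(\partial B_\eta(0))$.

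Next, the uniform convergence $F_{p,r}\to L$ on the bounded set $D_{z_0}$ will produce some $r_{z_0}>0$ such that $\sup_{h\in\overline{B_\eta(0)}}|F_{p,r}\circ\phi_{z_0}(h)-L\circ\phi_{z_0}(h)|<\mu_{z_0}$ for every $r\le r_{z_0}$. The straight-line homotopy $s\mapsto(1-s)L\circ\phi_{z_0}+sF_{p,r}\circ\phi_{z_0}$ will then avoid $V_{z_0}$ on $\partial B_\eta(0)$ for every $s\in[0,1]$, and by homotopy invariance of the Brouwer degree in $\R^2$,
\[
\deg(F_{p,r}\circ\phi_{z_0},B_\eta(0),z)=\deg(L\circ\phi_{z_0},B_\eta(0),z)=\sgn\det\nabla_\h F(p)\ne 0
\]
for every $z\in V_{z_0}$, so that $V_{z_0}\subseteq F_{p,r}(D_{z_0})\subseteq F_{p,r}(\openball{\veps}{p})$ whenever $r\le r_{z_0}$. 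Finally, the open sets $\operatorname{int}(V_{z_0})$ cover the compact set $K$; extracting a finite subcover $V_{z_1},\ldots,V_{z_N}$ and setting $\bar r:=\min_{i=1,\ldots,N}r_{z_i}>0$ will give $K\subseteq\bigcup_i V_{z_i}\subseteq F_{p,r}(\openball{\veps}{p})$ for every $r\in[0,\bar r]$ (the case $r=0$ reducing to the hypothesis).

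The main obstacle is really the construction of the two-dimensional slice $D_{z_0}$: this is what reduces an a priori three-to-two-dimensional problem to a self-map of the plane, where the standard Brouwer degree applies. Once the slice is chosen so that $L$ restricts to an invertible affine map, the rest is a routine combination of uniform convergence, homotopy invariance of the degree, and compactness of $K$.
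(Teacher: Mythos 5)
Your proof is correct in substance and shares the same core idea as the paper's: restrict the map to a two-dimensional slice, use uniform convergence to homotope $F_{p,r}$ to $\d_\h F(p)$ on that slice, and invoke homotopy invariance of the Brouwer degree. The difference is the bookkeeping. The paper works with a single global slice $D_\veps := \openball{\veps}{0}\cap\{x^\v=0\}$, sets $C:=(\d_\h F(0)|_{\bar D_\veps})^{-1}(K)$, and obtains the uniform separation $H_r(C)\cap H_r(\partial D_\veps)=\emptyset$ by a compactness--contradiction argument (extract converging subsequences $r_k\to 0^+$, $x_k\in C$, $y_k\in\partial D_\veps$). You instead build, for each $z_0\in K$, a small translated disk $D_{z_0}=q_{z_0}\cdot(\overline{B_\eta(0)}\times\{0\})$ on which the affine structure of $\d_\h F(p)\circ\phi_{z_0}$ makes the separation explicit and quantitative, and then finish by compactness of $K$ and a finite subcover; the price is that the threshold $\bar r$ becomes the minimum over finitely many local $r_{z_i}$. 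Your version avoids the need to verify that $H_0^{-1}(K)$ stays away from $\partial D_\veps$, which is somewhat cleaner, at the cost of a little extra machinery in the covering step.

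One small point to fix: the estimate $\dist(\phi_{z_0}(h),p)\le\dist(q_{z_0},p)+|h|$ is not quite correct for an arbitrary left-invariant $1$-homogeneous distance. By left-invariance and the triangle inequality, $\dist(\phi_{z_0}(h),p)\le\dist(q_{z_0},p)+\dist((h,0),0)$, and~\eqref{eq:equivalence-distance} only gives $\dist((h,0),0)\le\c\,|h|$ with the constant $\c=\c(\dist)\ge1$. Choosing $\eta<\c^{-1}\bigl(\veps-\dist(q_{z_0},p)\bigr)$ fixes this and the rest of the argument goes through unchanged.
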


\begin{proof}
Without loss of generality, we let $p=0$. We let $D_\veps := \openball{\veps}{0} \cap \cur{ x^\v = 0}$ which is a bounded open set in $\R^2$, identified with $\cur{x^\v =0}$, and set $\bar D_\veps := \ob{\veps}{0} \cap \cur{ x^\v = 0}$ and $\partial D_\veps := \bar D_\veps \setminus D_\veps$. 
Denote by $H_0: \bar D_\veps \to \H$ (resp.\ $H_r$, for $r>0$) the restriction of $\d_\h F(0)$ (resp.\ $F_r$) to $\bar D_\veps$. The map $H: [0,\infty) \times \bar D_\veps \to \H$, $(r,x) \mapsto H_r(x)$, is continuous in both variables, by uniform convergence of $F_r$ towards $\d_\h F(0)$ (Lemma~\ref{lem:convergence-blow}). 

For any compact $C \subseteq D_\veps$, by injectivity of $H_0$ and continuity of $H$, there is an $\bar{r} = \bar{r}(C)>0$ such that, for $r \in [0,\bar{r}]$, $H_r(C) \cap H_r(\partial D_\veps) = \emptyset$: otherwise, one could find sequences $r_k \to 0^+$, $x_k \in C$,  $y_k \in \partial D_\veps$ with $H_{r_k}(x_k)= H_{r_k}(y_k)$ and, by compactness and continuity, limit points $x \in C$, $y \in \partial D_\veps$ with  $H_{0}(x)= H_{0}(y)$. 

Given a compact $K \subseteq \d_{\h}  F(0)( \openball{\veps}{0} )$, we let $C =  H_0^{-1} (K)$. From $H_0(C) = \d_{\h}  F(0)( C ) = K$, we deduce that $z \in K$ implies $z \notin H_r(\partial D_\veps)$, for $r \le \bar r$. By homotopy invariance of the degree of a continuous map, it follows that  $\deg(H_r, \bar{D}_\veps, z) = \deg(H_0, \bar{D}_\veps, z) = \operatorname{sign} \det \nablah F(0) \neq 0$, hence there is an $x \in D_\veps$ with $z=H_r(x) = F_r(x)$. 
\end{proof}

\begin{proof}[Proof of Theorem \ref{thm:coarea}]


We split the proof in several steps.

{\emph{Reduction to a density computation.}} In view of the measure-theoretic coarea formula \cite[Theorem~2.2]{magnani_area_2011}, it is sufficient to show that the density with respect to $\cL^3$ of the measure
\begin{equation*} \nu_F(\A):= \int_{\R^2} \cS_\dist ^2\bra{\A \cap F^{-1}(z)} \d \cL^2(z)\end{equation*}
coincides with $J_\h F (p)$, at $\cL^3$-a.e.\ any point $p \in \H$. The coarea inequality \cite[Theorem~4.2]{magnani_area_2011} (i.e.\ inequality $\le$ in~\eqref{eq:coarea}, up to some multiplicative factor) implies that the set
\[
\cur{ p \in \O \, \colon  J_{\h}   F (p) = 0}
\]
is $\nu_F$-negligible, hence, without loss of generality, we let $p \in \H$ be nondegenerate for $F$. To simplify notation, we assume $p =0$ (the general case follows by considering $q \mapsto F(pq)$).

{\emph{Functional density.}} Instead of proving the usual differentiation
\begin{equation}
\label{eq:usual-density}
\lim_{r \to 0^+} \frac{\nu_F (\ob{r}{0})}{r^4} = \cL^3(\ob{1}{0}) J_\h F(0),\end{equation} 
it is technically easier, but equivalent, to introduce $\veps>0$, to be specified below, and prove
\begin{equation}  \label{eq:functional-density}\lim_{r \to 0^+} \frac{1}{r^4} \int_\H u \circ \delta_r \d \nu_F =  J_\h F(0),\end{equation}
 for all continuous functions $u: \H \to [0,\infty)$ with  $\int_{\ob{\veps}{0}} u  \d\cL^3=1$ and $\supp (u) \subseteq \ob{\veps}{0}$.  Indeed, if~\eqref{eq:functional-density} holds, we let $u(x):= \c (\veps - \dist(0,x))^+ = \int_0^\veps \chi_{\ob{\varrho}{0}} d \varrho$, where $\c =\c(\veps)$ is such that $\int_{\ob{\veps}{0}} u  \d\cL^3=1$. Notice that $u\circ \delta_r = \c \int_0^\veps \chi_{\ob{r \varrho}{0}} \d \varrho$. If $0$ is a differentiation point for $\nu_F$, i.e.\ the limit in the left hand side of~\eqref{eq:usual-density} exists, which we denote by $\ell$, then~\eqref{eq:functional-density} yields
\begin{equation*}
\begin{split}
J_hF(0) & = \lim_{r \to 0^+} \frac{1}{r^4} \int_\H u \circ \delta_r \d \nu_F = \lim_{r \to 0^+} \frac{ \c }{r^4}\int_0^\veps \nu_F (\ob{r \varrho}{0}) \d \varrho \\
& =   \c \int_0^\veps \lim_{r \to 0^+} \frac{\nu_F (\ob{r \varrho}{0})}{r^4} \d \varrho = \c \int_0^\veps\ell \varrho^4 \d\varrho 
= \frac{\ell}{\cL^3(\ob{1}{0})} \int_\H u \d\cL^3 = \frac{\ell}{\cL^3(\ob{1}{0})}.
\end{split}
\end{equation*} 

\emph{Change of variables.} To show~\eqref{eq:functional-density},  
we write, for $r>0$,
\begin{equation*}\begin{split}
\frac{1}{r^4} \int_\H u \circ \delta_r \d \nu_F &=  \frac{1}{r^4}\int_{\R^2}\int_{F^{-1}(z)}{ u \circ \delta_r} \d \cS_\dist^2 \d \cL^2(z)\\
& =  \frac{1}{r^2}\int_{\R^2} \int_{F^{-1}_{r} (z/r)} u(x) \d \cS_\dist^2(x) \d \cL^2(z) \quad\text{by substitution $x \mapsto \delta_{r} x$ in $\cS^2_\dist$}\\
 & =  \int_{\R^2}  \int_{F^{-1}_{r} (z)} u \d \cS_\dist^2 \d \cL^2 (z) \quad \text{by substitution $z \mapsto zr$.}\\
\end{split}\end{equation*}
Since $\supp (u) \subseteq \ob{\veps}{0}$, we can restrict the integration over $\R^2$ to any Borel $\A \supseteq F_r(\ob{\veps}{0})$,
\begin{equation} \label{eq:coarea-change-variables}
\frac{1}{r^4} \int_\H u \circ \delta_r \d \nu_F  =   \int_{\A}   \int_{F^{-1}_{r} (z)} u \d \cS_\dist^2 \d \cL^2 (z),
\end{equation}
and similarly, by~\eqref{eq:coarea-functions} with $\d_\h F(0)$ in place of $F$ (Lemma~\ref{lem:convergence-blow}), if $\A \supseteq \d_\h F(0)(\ob{\veps}{0})$, then
\begin{equation} \label{eq:coarea-change-variables2}
\begin{aligned}
 J_\h F(0) & =  J_\h (\d _\h F(0)) =
\int_\H u \,  J_\h \bra{\d _\h F(0)} \d \cL^3
 =  \int_{\A}  \int_{\bra{\d_\h F(0)}^{-1} (z)} u \d \cS_\dist^2 \d \cL^2 (z).
\end{aligned}
\end{equation}
By uniform convergence of $F_r$ to $\d_\h F(0)$, there is an $\bar{r} = \bar{r}(\veps)>0$ such that $F_r(\ob{\veps}{0}) \subseteq \d_\h F (0) (\ob{2\veps}{0})$, hence both~\eqref{eq:coarea-change-variables} and~\eqref{eq:coarea-change-variables2} hold with $\d_\h F (0) (\ob{2\veps}{0})$ in place of $U$. 
On the other hand, Lemma~\ref{lem:surjective-blow} applied 
with $p=0$, $3\veps$ in place of $\veps$ and 
$K := \d_\h F (0) (\ob{2\veps}{0})$ gives some $\bar{r} = \bar{r}(\veps)>0$ such that
\begin{equation*}\d_\h F (0) (\ob{2\veps}{0}) \subseteq  F_r\bra{\ob{3 \veps}{0}}, \quad\text{for $r \in (0,\bar r]$,}
\end{equation*}
hence~\eqref{eq:coarea-change-variables} and~\eqref{eq:coarea-change-variables2} hold, for $r \le \bar r$, with $\A := \bigcap_{r\le \bar r} F_r\bra{\ob{3\veps}{0}}$, a choice that we fix in what follows.

\emph{Convergence of level sets.} We are in a position to apply Corollary~\ref{cor:stability} to (some subsequence of) $\cur{F_r}_{r>0}$, which converge to $\d_\h F(0)$, in $\conetwo$, as $r \to 0^+$. To simplify, we retain the notation $\cur{F_r}_{r>0}$ instead of writing e.g.\ $\cur{F_{r_n}}_{n \ge 1}$. 
We obtain some positive $\bar{r}$, $\delta_4$, $\veps_4$ and $\varrho_4$ such that, letting $I =[-\delta_4, \delta_4]$, for any $r \in (0, \bar{r}]$ and 
$q^r \in \ob{\veps_4}{0}$, there is an injective solution to the \lsd $\gamma^r: I \to \H$ associated to $F_r$, with $\gamma^r_0 = q^r$,~\eqref{eq:holder-horizontal-stability} and
\[ \gamma^r(I)\cap \ob{\veps_4}{0} = (F_r)^{-1}(F_r(q^r)) \cap \ob{\veps_4}{0}. \]
Therefore, we choose $\veps$ such that $3 \veps \le \veps_4$. Then, for any $z \in \A$, $r \in (0, \bar{r}]$, there is a $q^r\in \ob{3\veps}{0}$ such that $F_r(q^r)=z$, hence we obtain from the area formula (Corollary~\ref{cor:area-lsde} and Remark~\ref{rem:large-interval-area})
\begin{equation}\label{eq:area-in-coarea} \int_{F_r^{-1}(z)} u \d \cS^2_\dist = \int_I u(\gamma_\tau^r) \d \tau,\quad \text{for $r \in (0, \bar r]$.}\end{equation}
As $r \to 0^+$, along any (uniformly) converging subsequence $\gamma^{r_k}$, we obtain in the limit some injective solution $\gamma$ to the \lsd associated to $\d_\h F(0)$, such that~\eqref{eq:holder-horizontal} holds and
\[ \gamma(I)\cap \ob{\veps_4}{0} = \bra{\d_\h F(0)}^{-1}(z) \cap \ob{\veps_4}{0} \]
using also $z = F_{r_k}(\gamma_0^{r_k}) \to \d_\h F(0)(\gamma_0)$, by uniform convergence of $\cur{F_{r_k}}_k$. Since $u$ is continuous, we have that
\[ \lim_{r_k \to 0^+} \int_I u(\gamma_\tau^{r_k}) \d \tau= \int_I u(\gamma_\tau) \d \tau= \int_{\bra{\d_\h F(0)}^{-1}(z)} u \d \cS^2_\dist,\]
the latter equality being an application of the area formula for $\gamma$. In particular, the limit value does not depend on the subsequence $\cur{r_k}_k$, and recalling ~\eqref{eq:area-in-coarea}, we conclude that 
\[  \lim_{r \to 0^+} \int_{F_r^{-1}(z)} u \d \cS^2_\dist  = \int_{\bra{\d_\h F(0)}^{-1}(z)} u \d \cS^2_\dist,\]
for any $z \in \A$. From~\eqref{eq:coarea-change-variables} and~\eqref{eq:coarea-change-variables2}, by dominated convergence, we have
\begin{equation*}\begin{split}
\lim_{r \to 0^+} \frac{1}{r^4} \int_\H u \circ \delta_r \d \nu_F & =  \lim_{r \to 0^+}  \int_{\A}   \int_{F^{-1}_{r} (z)} u \d \cS_\dist^2 \d \cL^2 (z) \\
& =  \int_{\A}  \int_{\bra{\d_\h F(0)}^{-1} (z)} u \d \cS_\dist^2 \d \cL^2 (z) = J_\h F(0),
\end{split}\end{equation*}
i.e.\ ~\eqref{eq:functional-density} is proven, hence the thesis.
\end{proof}

\printbibliography    
\end{document}